\newtheorem{theorem}{Theorem}[section]
\newtheorem{lemma}[theorem]{Lemma}
\newtheorem{corollary}[theorem]{Corollary}
\newtheorem{prop}[theorem]{Proposition}
\theoremstyle{definition}
\newtheorem{definition}[theorem]{Definition}
\newcommand{\kemeny}{\mathscr{K}}
\newcommand{\hidden}[1]{}
\newcommand{\rone}{r_1}
\newcommand{\rtwo}{r_2}
\newcommand{\Gnv}{G\setminus\{v\}}
\title{A 1-Separation Formula for the Graph Kemeny Constant and Braess Edges}
\author{
    Nolan Faught\footnote{
        Department of Mathematics, Brigham Young University, Provo UT, USA, faught3@gmail.com
    },
    Mark Kempton\footnote{
        Department of Mathematics, Brigham Young University, Provo UT, USA, mkempton@mathematics.byu.edu
    },
    Adam Knudson\footnote{
        Department of Mathematics, Brigham Young University, Provo UT, USA, adamarstk@yahoo.com
    }
}
\date{}
\begin{document}

\maketitle

\begin{abstract}
    Kemeny's constant of a simple connected graph $G$ is the expected length of a random walk
    from $i$ to any given vertex $j \neq i$. We provide a simple method for computing Kemeny's constant
    for 1-separable via effective resistance methods from electrical network theory. Using this
    formula, we furnish a simple proof that the path graph on $n$ vertices maximizes Kemeny's constant
    for the class of undirected trees on $n$ vertices. Applying this method again, we simplify existing
    expressions for the Kemeny's constant of barbell graphs and demonstrate which barbell maximizes
    Kemeny's constant. This 1-separation identity further allows us to create sufficient conditions for
    the existence of Braess edges in 1-separable graphs. We generalize the notion of the Braess edge to
    Braess sets, collections of non-edges in a graph such that their addition to the base graph increases
    the Kemeny constant. We characterize Braess sets in graphs with any number of twin pendant vertices,
    generalizing work of Kirkland et.~al.~\cite{kirkland2016kemeny} and Ciardo \cite{ciardo2020braess}.
\end{abstract}

\section{Introduction}
    Kemeny's constant is an invariant of a Markov chain that represents the expectation of the
    hitting times. 
    Kemeny's constant of a Markov chain $P$ is computed with the sum
    \begin{equation*}
        \kemeny(P) = \sum_j \pi_j m_{ij},
    \end{equation*}
    where $\pi_j$ is the $j$-th entry of the stationary distribution of the Markov chain and $m_{ij}$
    is the hitting time of $j$ for a random walk with initial state $i$. Although this sum is not
    well-defined for all Markov chains, it is well-defined for random walks on simple connected graphs.
    In this context, Kemeny's constant measures the expected length of a random walk between two
    randomly chosen vertices, and serves as a measure of how well-connected a graph is.
    For a more comprehensive discussion
    of Kemeny's constant, we refer the reader
    to \cite{catral2010kemeny}.
    This has mathematical applications to graph theory, and real-world applications in robotics
    \cite{patel2015robotic}, web navigation \cite{levene2002kemeny}, and mathematical chemistry \cite{li2019multiplicative}.
    
    Intuitively, adding edges to a graph increases connectivity and generates shortcuts for random walks,
    so we would expect Kemeny's constant to decrease when we add connections to a graph. Typically this
    is true, but the authors of \cite{kirkland2016kemeny} demonstrate that most trees contain a pair of
    vertices
    such that adding an edge between them increases Kemeny's constant. This phenomenon is an instance of 
    Braess' traffic-planning paradox, presented in \cite{braess1968paradoxon} (see
    \cite{braess2005paradox} for an English translation), in which the deletion of an edge from a network
    improves some properties of connectivity. A non-edge $e$ of a graph $G$ that results in an increase
    of $\kemeny(G)$ when inserted is a \emph{Braess edge}.
    
    The authors of \cite{kirkland2016kemeny} go on to prove that \emph{twin pendants} (degree one vertices
    adjacent to the same vertex) in trees are always Braess. Building on this, \cite{ciardo2020braess}
    proves that twin pendants are always Braess on any nontrivial connected graph.
    
    
    In the work of \cite{kirkland2016kemeny} and \cite{ciardo2020braess}, the graphs in which Braess edges are observed to occur all have a \emph{1-separation}, that is, a single vertex whose removal disconnects the graph.  This gives rise to the question if Braess edges occur in other graphs with 1-separations.  From this perspective, Kemeny's constant in a graph with a 1-separation is naturally approached using the \emph{effective resistance} (also called \emph{resistance distance}) from electrical network theory.  Kemeny's constant can be expressed using a formula involving effective resistances in a graph (see Lemma \ref{thm:kemeny} below).  In addition, effective resistance is easily computed in graphs with a 1-separation (see Proposition \ref{prop:1sepres} below).  This provides the motivation for the present paper, in which we study Kemeny's constant in graphs with a 1-separation.
    
     In this paper, we give a 1-separation formula for Kemeny's constant, that is, a formula for Kemeny's constant of a graph with a 1-separation given Kemeny's constant of simpler subgraphs.  See Theorem \ref{thm:kem1sepformula} below.  The proof of this Theorem makes use of the effective resistance and its nice behavior in graphs with a 1-separation.  Furthermore, we demonstrate several uses of this 1-separation formula.  First, we give simple expressions
    for Kemeny's constant of barbell graphs, which were studied in \cite{breen2019computing}. Barbell graphs are of interest in the study of Kemeny's constant, as they are believed, based on empirical computation, to maximize Kemeny's constant among graphs on a given number of vertices.  From \cite{breen2019computing}, it is known that certain barbells on $n$ vertices have Kemeny's constant on the order of $n^3$, and that order $n^3$ is the largest Kemeny's constant can be.  Our 1-separation formula allows us to give an exact expression for Kemeny's constant in barbell graphs that is much simpler than that found in \cite{breen2019computing}, and we are able to determine what barbell has the largest Kemeny's constant among all barbell graphs on $n$ vertices.
    Second, we use our formula to prove that, among all trees on $n$ vertices, the path graph has the largest Kemeny's constant. 
    Finally, we will return our focus to Braess' paradox and use our formula to find conditions under which a graph with a 1-separation has a Braess edge or a Braess set of edges.  We generalize work of \cite{kirkland2016kemeny} and \cite{ciardo2020braess} to include any number of twin pendants.
    

    
    \subsection{Notation and Preliminaries}
        We introduce a few definitions and results that we will use. We denote by $r_G(i,j)$ the \emph{effective resistance} between vertex $i$ and $j$, considering the graph as an electric circuit with each edge representing a unit resistor. This quantity is given by $r_G(i,j) = (e_i - e_j)^TL^\dag(e_i-e_j)$ where $e_i$ is the vector with a 1 in the $i$-th position and zeros elsewhere and $L^\dag$ is the Moore-Penrose pseudoinverse of the graph Laplacian matrix (see \cite{bapat2010graphs}).
        \begin{lemma}[Corollary 1 of \cite{palacios2011broder}]\label{thm:kemeny}
            Suppose that $G = (V, E)$ is a simple connected graph, where $R$ denotes the matrix whose
            $(i, j)$-th entry is the effective resistance between $i$ and $j$, $d$ the vector whose $i$-th entry is the degree of vertex $i$
            , and $m = |E|$. Kemeny's constant of the graph is related to the effective resistance
            by the identity
            \begin{equation*}
                \kemeny(G) = \frac{d^T Rd}{4m} = \frac{1}{4m}\sum_{i, j \in G} d_i d_j r_G(i, j).
            \end{equation*}
        \end{lemma}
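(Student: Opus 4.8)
The plan is to obtain the identity from two classical inputs: first, that Kemeny's constant $\kemeny(G)=\sum_j \pi_j m_{ij}$ does not depend on the starting vertex $i$; and second, the commute-time formula of Chandra, Raghavan, Ruzzo, Smolensky and Tiwari, namely $m_{ij}+m_{ji}=2m\,r_G(i,j)$, together with the fact that the simple random walk on $G$ has stationary distribution $\pi_i=d_i/(2m)$. Granting these, the proof is a short symmetrization argument.

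Concretely, I would first use the $i$-independence to average the defining sum against the stationary distribution in the free index, giving
\[
\kemeny(G)=\sum_i \pi_i\sum_j \pi_j m_{ij}=\sum_{i,j}\pi_i\pi_j m_{ij}.
\]
The summand is now symmetric under interchanging the roles of $i$ and $j$, so $m_{ij}$ may be replaced by $\tfrac12(m_{ij}+m_{ji})$ without altering the total. Substituting the commute-time identity yields $\kemeny(G)=m\sum_{i,j}\pi_i\pi_j\,r_G(i,j)$, and inserting $\pi_i=d_i/(2m)$ collapses the constant $m/(2m)^2$ to $1/(4m)$, producing
\[
\kemeny(G)=\frac{1}{4m}\sum_{i,j\in G} d_i d_j\, r_G(i,j)=\frac{d^T R d}{4m},
\]
as claimed.

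The substance of the argument lies entirely in the two inputs. For the $i$-independence I would either cite the classical result of Kemeny and Snell or reprove it quickly via the fundamental matrix $Z=(I-P+\mathbf 1\pi^T)^{-1}$: using $m_{ij}=(Z_{jj}-Z_{ij})/\pi_j$ and the row-sum identity $\sum_j Z_{ij}=1$ gives $\sum_j \pi_j m_{ij}=\operatorname{tr}(Z)-1$, manifestly free of $i$. For the commute-time identity I would either cite Chandra--Raghavan--Ruzzo--Smolensky--Tiwari directly or derive it from the pseudoinverse formula $r_G(i,j)=(e_i-e_j)^T L^\dagger (e_i-e_j)$ (see \cite{bapat2010graphs}) by expressing hitting times through $L^\dagger$ and $\pi$. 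The main obstacle is therefore not the computation — which is a one-line prefactor chase — but importing the commute-time identity in the normalization matching this paper's conventions (unit-resistor edges, $m=|E|$); once that bookkeeping is pinned down, the symmetrization step is immediate, and the statement reduces to the cited Corollary 1 of \cite{palacios2011broder}.
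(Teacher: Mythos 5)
Your derivation is correct: the paper states this lemma as a citation (Corollary 1 of \cite{palacios2011broder}) without proof, and your argument — averaging $\sum_j \pi_j m_{ij}$ over $i$ using Kemeny--Snell independence, symmetrizing to replace $m_{ij}$ by $\tfrac12(m_{ij}+m_{ji})$, and applying the commute-time identity $m_{ij}+m_{ji}=2m\,r_G(i,j)$ with $\pi_i=d_i/(2m)$ — is precisely the standard route taken in that reference, with the normalization $d^TRd/(4m)$ coming out correctly (the diagonal terms vanish on both sides since $m_{ii}=r_G(i,i)=0$). Nothing further is needed.
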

    
    The notion of the moment is proposed for rooted trees in \cite{ciardo2020kemeny}. We extend this
    concept to the more general class of simple connected graphs.
    \begin{definition}
        Let $G = (V, E)$ be a simple connected graph. Let $e_v$ denote the vector with a 1 in the $v$-th position and zeros elsewhere. The \emph{moment} of $v \in V$ is
        \begin{equation*}
            \mu(G, v) = d^T R e_v = \sum_{i\in V(G)} d_i r_G(i,v).
        \end{equation*}
    \end{definition}
    
    \begin{definition}
        Let $G_1, G_2$ be simple connected graphs and with labelled vertices $v_1 \in V(G_1)$, and
        $v_2 \in V(G_2)$. The \emph{1-sum} $G = G_1\oplus_{v_1,v_2} G_2$ is the graph created by 
        taking a copy of $G_1, G_2$, removing $v_1$, and replacing every edge of the form $\{i, v_1\} \in E(G_1)$
        with $\{i, v_2\}$. We often omit the subscript when the choice and/or labelling of vertices
        is clear. We say $G_1\oplus_vG_2$ has a \emph{1-separation}, and that $v$ is a \emph{1-separator} or \emph{cut vertex.}
    \end{definition}
    
    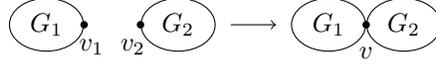
\begin{figure}[H]
        \centering
        \begin{tikzpicture}
    \tikzstyle{every node}=[circle, draw=none, fill=white, minimum width = 6pt, inner sep=1pt]
    \draw[] (1.75, 1)ellipse(14pt and 10pt);
    \draw[] (3.5, 1)ellipse(14pt and 10pt);
    \draw{
    (1.75,1)node[]{$G_1$}
    (3.5,1)node[]{$G_2$}
    (2.25,1)node[fill=black, minimum width = 2pt, label={[shift={(0.1,-.6)}]{$v_1$}}]{}
    (3,1)node[fill=black, minimum width = 2pt, label={[shift={(-0.1,-.6)}]{$v_2$}}]{}
    };
    
    \draw[->] (4.2,1) -- (4.8,1);
    
    \draw[] (5.5, 1)ellipse(14pt and 10pt);
    \draw[] (6.5, 1)ellipse(14pt and 10pt);
    \draw{
    (5.5,1)node[]{$G_1$}
    (6.5,1)node[]{$G_2$}
    (6,1)node[fill=black, minimum width = 2pt, label={[shift={(0,-.6)}]{$v$}}]{}
    };
    \end{tikzpicture}
        \caption{The graph $G=G_1\oplus_vG_2$ created from $G_1$ and $G_2$}
        \label{fig:vertexsum}
    \end{figure}

    \begin{prop}[Theorem 2.5 of \cite{barrett2019resistance}, Cut Vertex Theorem]\label{prop:1sepres}
        Let $G$ be the 1-sum of $G_1, G_2$ with labelled vertex $v$. For $i \in V(G_1)$, $j \in V(G_2)$,
        \begin{equation*}
            r_G(i,j) = r_{G_1}(i,v)+r_{G_2}(v,j).
        \end{equation*}
    \end{prop}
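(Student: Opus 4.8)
The plan is to construct the (essentially unique) electrical potential that realizes $r_G(i,j)$ by gluing together the corresponding potentials on $G_1$ and on $G_2$ along the cut vertex. The one general fact I will lean on repeatedly is: if $H$ is connected and $\phi\in\mathbb{R}^{V(H)}$ satisfies $L_H\phi = e_a - e_b$, then, since $e_a-e_b\perp\mathbf{1}$ and $\ker L_H = \mathrm{span}\{\mathbf{1}\}$, we have $\phi = L_H^{\dagger}(e_a-e_b) + c\mathbf{1}$ for some scalar $c$, hence $\phi(a)-\phi(b) = (e_a-e_b)^T\phi = (e_a-e_b)^TL_H^{\dagger}(e_a-e_b) = r_H(a,b)$. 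So it suffices to exhibit one vector $\phi$ on $V(G)$ with $L_G\phi = e_i - e_j$ and then read off $\phi(i)-\phi(j)$.

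First I would set up the two local potentials. Let $\phi_1$ be the unique vector on $V(G_1)$ with $L_{G_1}\phi_1 = e_i - e_{v_1}$ and $\phi_1(v_1) = 0$; applying the fact above inside $G_1$ gives $\phi_1(i) = \phi_1(i)-\phi_1(v_1) = r_{G_1}(i,v)$. Symmetrically, let $\phi_2$ on $V(G_2)$ satisfy $L_{G_2}\phi_2 = e_{v_2} - e_j$ with $\phi_2(v_2) = 0$, so that $\phi_2(j) = -\bigl(\phi_2(v_2)-\phi_2(j)\bigr) = -\,r_{G_2}(v,j)$. Since both vanish at the cut vertex, identifying $v_1,v_2$ with $v$ lets me define $\phi$ on $V(G)$ by $\phi := \phi_1$ on the $G_1$-part and $\phi := \phi_2$ on the $G_2$-part, which is well defined.

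Next I would verify $L_G\phi = e_i - e_j$ entrywise. For a vertex $u\neq v$ on the $G_1$-side, all of its $G$-neighbors still lie on the $G_1$-side after the edge relabeling $\{x,v_1\}\mapsto\{x,v_2\}$ in the definition of the $1$-sum, so its row of $L_G\phi$ is literally the $u$-row of $L_{G_1}\phi_1$, while on the right $(e_i-e_j)_u = (e_i-e_{v_1})_u$ because $j$ is not on the $G_1$-side; the $G_2$-side is symmetric. The one interesting row is the $v$-row: there the $G$-neighborhood of $v$ splits as the $G_1$-neighbors of $v_1$ together with the $G_2$-neighbors of $v_2$, with $\deg_G(v) = \deg_{G_1}(v_1)+\deg_{G_2}(v_2)$, and because $\phi(v)=\phi_1(v_1)=\phi_2(v_2)=0$ this row evaluates to $(L_{G_1}\phi_1)_{v_1} + (L_{G_2}\phi_2)_{v_2} = (-1) + (+1) = 0 = (e_i-e_j)_v$. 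With $L_G\phi = e_i-e_j$ in hand, the opening remark gives $r_G(i,j) = \phi(i)-\phi(j) = \phi_1(i) - \phi_2(j) = r_{G_1}(i,v) + r_{G_2}(v,j)$.

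The only delicate point is this verification at the cut vertex, where one must keep careful track of the relabeling in the definition of the $1$-sum and the resulting splitting of $v$'s degree and neighborhood; everything else is bookkeeping (and the degenerate cases $i=v$ or $j=v$ are immediate). If one prefers a flow-based argument, there is a parallel route via Thomson's principle: concatenate the unit $i$-to-$v$ current flow in $G_1$ with the unit $v$-to-$j$ current flow in $G_2$. Because $E(G)$ is the disjoint union of the two edge sets and no cycle of $G$ crosses the cut vertex, the concatenation is itself a unit $i$-to-$j$ flow in $G$ satisfying Kirchhoff's cycle law, hence it is the current flow, and its energy — the sum of the two energies — equals $r_G(i,v)+r_G(v,j)$ in the obvious notation, i.e. $r_{G_1}(i,v)+r_{G_2}(v,j)$.
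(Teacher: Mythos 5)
Your argument is correct. One thing worth flagging: the paper does not prove this statement at all --- it is imported verbatim as Theorem 2.5 of the cited reference on resistance distance (the ``Cut Vertex Theorem''), so there is no in-paper proof to compare against. Your potential-gluing argument is a sound, self-contained derivation: the opening observation that any solution of $L_H\phi = e_a - e_b$ satisfies $\phi(a)-\phi(b) = r_H(a,b)$ is exactly right (the constant ambiguity is killed by $(e_a-e_b)^T\mathbf{1}=0$), the normalization $\phi_1(v)=\phi_2(v)=0$ makes the glued $\phi$ well defined, and the row-by-row check of $L_G\phi = e_i - e_j$ is handled correctly at the only nontrivial vertex, where $\deg_G(v)=\deg_{G_1}(v)+\deg_{G_2}(v)$ and the two rows contribute $-1$ and $+1$. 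The degenerate cases $i=v$ or $j=v$ are indeed immediate (one of the two potentials is identically zero). Your alternative flow argument is also valid, resting on the correct observation that every cycle of $G$ lies entirely on one side of the cut vertex (a cycle visits $v$ at most once), so the concatenated unit flow obeys the cycle law and is therefore the current flow, with energy additive over the disjoint edge sets. Either route would serve as a legitimate proof of the proposition the paper takes on faith.
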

\hidden{
    The connection between Kemeny's constant and the graph resistance is outlined as follows:
    
    \begin{lemma}[Corollary 1 of \cite{palacios2011broder}]\label{thm:kemeny}
        Let $R$ be the matrix whose i, j-th entry is the effective resistance between $i$ and $j$ and $d$ be the vector whose i-th entry is $d_i$. Kemeny's constant is given by
        \begin{equation*}
            \kemeny(G) = \frac{d^T Rd}{4m} = \frac{1}{4m}\sum_{i, j \in G} d_i d_j r_G(i, j).
        \end{equation*}
    \end{lemma}
    
    In \cite{ciardo2020kemeny} the authors propose the notion of moment of a rooted tree. We extend this definition to any connected undirected graph.
    \begin{definition}
    The \emph{moment} of a graph $G$ at vertex $v$ is given by
    \[\mu(G, v) = d^TRe_v = \sum_{i\in V(G)}d_ir_G(i,v).\]
    \end{definition}
    
    Throughout this paper, let $G\oplus_v H$ denote the vertex sum of graphs $G$ and $H$ at vertex $v$ as seen in Figure \ref{fig:vertexsum}. 
    (ABOVE: paragraph uses notation already. Move this?? Change above??)
    \begin{figure}[H]
        \centering
        \begin{tikzpicture}
    \tikzstyle{every node}=[circle, draw=none, fill=white, minimum width = 6pt, inner sep=1pt]
    \draw[] (2, 1)ellipse(14pt and 10pt);
    \draw[] (3.5, 1)ellipse(14pt and 10pt);
    \draw{
    (2,1)node[]{$G$}
    (3.5,1)node[]{$H$}
    (2.5,1)node[fill=black, minimum width = 2pt, label={[shift={(0,-.6)}]{$v$}}]{}
    (3,1)node[fill=black, minimum width = 2pt, label={[shift={(0,-.6)}]{$v$}}]{}
    };
    
    \draw[->] (4.2,1) -- (4.8,1);
    
    \draw[] (5.5, 1)ellipse(14pt and 10pt);
    \draw[] (6.5, 1)ellipse(14pt and 10pt);
    \draw{
    (5.5,1)node[]{$G$}
    (6.5,1)node[]{$H$}
    (6,1)node[fill=black, minimum width = 2pt, label={[shift={(0,-.6)}]{$v$}}]{}
    };
    \end{tikzpicture}
        \caption{The graph $G\oplus_vH$ created from $G$ and $H$}
        \label{fig:vertexsum}
    \end{figure}
    }

\section{The 1-Separation Formula}\label{sec:1sep}
    In this section we will derive various useful expressions for Kemeny's constant of graphs with 1-separations. These will make it easier to compute Kemeny's constant for 1-connected graphs, lead to a result about trees with maximal Kemeny's constant, and help us determine a sufficient condition on graph structures that may allow for Braess edges.
    
     Theorem \ref{thm:kem1sepformula} below is a special case of Theorem \ref{thm:kemenymany1seps} and used as a base cases in its proof. However, it is useful enough in its own right that we have included it as a theorem of its own.
    
    
    \begin{theorem}\label{thm:kem1sepformula}
    Let $G$ be a graph with a 1-separator $v$. Let $G_1, G_2$ be the two graphs of the 1-separation so $G=G_1\oplus_vG_2$ and $m = |E(G)| = |E(G_1)| + |E(G_2)| = m_1 + m_2.$ Then we have
    \begin{align*}
        \kemeny(G) =&\: \frac{m_1\left(\kemeny(G_1) + \mu(G_2, v)\right) + m_2\left(\kemeny(G_2) + \mu(G_1, v)\right)}{m_1+m_2}.
    \end{align*}
    \end{theorem}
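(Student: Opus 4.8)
The approach is to expand $\kemeny(G) = \frac{1}{4m} d^T R d$ using Lemma~\ref{thm:kemeny}, and split the double sum $\sum_{i,j \in V(G)} d_i d_j r_G(i,j)$ according to whether the vertices $i,j$ lie in $G_1$ or $G_2$, using Proposition~\ref{prop:1sepres} to rewrite the cross-terms. The key bookkeeping point is that the cut vertex $v$ belongs to both $V(G_1)$ and $V(G_2)$, so I would fix the convention that $V(G) = (V(G_1) \setminus \{v\}) \cup V(G_2)$ (or symmetrically), and that $d_v = d_v^{(1)} + d_v^{(2)}$ where $d_v^{(k)}$ is the degree of $v$ inside $G_k$. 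Crucially, $r_G$ restricted to pairs within $G_1$ equals $r_{G_1}$, and similarly for $G_2$ — this is the standard fact that effective resistance within a block is unaffected by the rest of the graph (a consequence of, or provable alongside, Proposition~\ref{prop:1sepres}).

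First I would write
\begin{equation*}
    4m\,\kemeny(G) = \sum_{i,j \in V(G_1)} d_i d_j r_{G_1}(i,j) + \sum_{i,j \in V(G_2)} d_i d_j r_{G_2}(i,j) + 2\!\!\sum_{\substack{i \in V(G_1)\setminus\{v\} \\ j \in V(G_2)\setminus\{v\}}}\!\! d_i d_j\bigl(r_{G_1}(i,v) + r_{G_2}(v,j)\bigr),
\end{equation*}
being careful that here the degrees $d_i$ are the degrees in $G$, which agree with the degrees in $G_k$ for $i \neq v$, while at $v$ we must track the two pieces separately. The first sum is almost $4m_1 \kemeny(G_1)$ but uses $d_v$ (the full degree) rather than $d_v^{(1)}$; the discrepancy terms involving $v$ will have to be reconciled against the cross-sum. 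The cleanest route is probably to keep $v$ out of both "pure" sums initially and handle all terms touching $v$ explicitly, then recognize the emerging pieces: the cross-sum factors as $\left(\sum_{i \in V(G_1)\setminus\{v\}} d_i r_{G_1}(i,v)\right)\left(\sum_{j \in V(G_2)\setminus\{v\}} d_j\right) + (\text{symmetric})$, and since $r_{G_k}(v,v) = 0$ the restriction $i \neq v$ is harmless, so $\sum_{i \in V(G_1)} d_i^{(G_1)} r_{G_1}(i,v) = \mu(G_1, v)$. Meanwhile $\sum_{j \in V(G_2)\setminus\{v\}} d_j = 2m_2 - d_v^{(2)}$.

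Then I would assemble: the terms should reorganize into $4m_1\kemeny(G_1) + 4m_2\kemeny(G_2)$ plus cross-contributions that collapse to $4m_1 \mu(G_2,v) + 4m_2 \mu(G_1,v)$, after the degree-at-$v$ corrections cancel. Dividing by $4m = 4(m_1+m_2)$ yields the stated formula. The main obstacle I anticipate is precisely the careful accounting of the terms involving the cut vertex $v$: making sure that the "missing" piece when $d_v$ is split as $d_v^{(1)} + d_v^{(2)}$ is exactly absorbed by the cross terms, and that no double-counting occurs since $v$ is shared. A clean way to sidestep some of this is to note $d^T R d = \sum_{i,j} d_i d_j r_G(i,j)$ can be computed as $(d^{(1)} + d^{(2)})^T R (d^{(1)} + d^{(2)})$ where $d^{(k)}$ is the degree vector of $G_k$ extended by zeros to $V(G)$, expand the quadratic form into four pieces, and use Proposition~\ref{prop:1sepres} on each; the two "pure" pieces give $4m_k\kemeny(G_k)$ directly and the two mixed pieces $2 (d^{(1)})^T R d^{(2)}$ split via the cut-vertex formula into $\mu$-terms times total-degree-sums $2m_k$, giving the result with no leftover corrections. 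I expect the write-up to follow this second, cleaner decomposition.
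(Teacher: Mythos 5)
Your proposal is correct and follows essentially the same route as the paper's proof: both expand $\kemeny(G) = d^TRd/4m$ via Lemma~\ref{thm:kemeny}, split the double sum according to which side of the 1-separation each vertex lies on, and apply Proposition~\ref{prop:1sepres} to the cross terms while tracking $d_v = d_{v_1}+d_{v_2}$. Your final reorganization via $d = d^{(1)}+d^{(2)}$ is a cleaner bookkeeping of the same computation (the mixed piece factors directly into $4m_1\mu(G_2,v)+4m_2\mu(G_1,v)$ with no leftover corrections), whereas the paper carries out the explicit cancellation of the cut-vertex terms.
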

    \begin{proof}
        We simplify the expression for Kemeny's constant in Lemma 1.1 via application of Proposition \ref{prop:1sepres}.
        Throughout the proof, we denote $r_{G_1}$ and $r_{G_2}$ with the shorthand $r_1$ and $r_2$ and $d_{v_1}$
        refers to the degree of $v$ as a vertex of $G_1$, while $d_{v_2}$ its degree as a vertex of $G_2$.
        Partition the vertex set $V$ into $V_1 = V(G_1) \setminus \{v\}$, $V_2 = V(G_2) \setminus v$, and $\{v\}$,
        so that we now consider the sum over each part. Due to the symmetricity of effective resistance, each
        case $i \in V_k$, $j \in V_\ell$, $k \neq \ell$ is similar to $i \in V_\ell$, $j \in V_k$.
        
        Let $i \in V_1$ and $j \in V_2$, then the sum over the parts is
        \begin{align*}
            \frac{1}{4m} \sum_{i \in V_1, j \in V_2} d_i d_j r_G(i, j)
            &= \frac{1}{4m} \sum_{i \in V_1} d_i \sum_{j \in V_2} d_j \left(r_1(i, v) + r_2(v, j)\right) \\
            &= \frac{1}{4m} \sum_{i \in V_1}d_i \left[
                \sum_{j \in V_2} d_j r_2(j, v) + \sum_{j \in V_2} d_j r_1(i, v)
            \right] \\
            &= \frac{1}{4m} \left(
                \sum_{i \in V_1} d_i \mu(G_2, v) + \sum_{j \in V_2} d_j \left[
                    \sum_{i \in V_1} d_i r_1(i, v)
                \right]
            \right) \\
            &= \frac{1}{4m} ((2m_1-d_{v_1}) \mu(G_2, v) + (2m_2-d_{v_2}) \mu(G_1, v)) \\
            &= \frac{m_1 \mu(G_2, v) + m_2 \mu(G_1, v)}{2m}-\frac{d_{v_1}\mu(G_2,v)+d_{v_2}\mu(G_1,v)}{4m}.
        \end{align*}
        Taking the sum over $i, j \in V_1$,
        \begin{align*}
            \frac{1}{4m} \sum_{i, j \in V_1} d_i d_j r_G(i, j)
            &= \frac{1}{4m} \sum_{i, j \in V(G_1)} d_i d_j r_1(i, j)
             - \frac{2}{4m} \sum_{i \in V_1} d_{v_1} d_i r_1(i, v) \\
            &= \frac{m_1 \kemeny(G_1)}{m} - \frac{d_{v_1}  \mu(G_1, v)}{2m}.
        \end{align*}
        The case for $i, j \in V_2$ is similar, with the sum evaluating to
        \begin{equation*}
            \frac{1}{4m} \sum_{i, j \in V_2} d_i d_j r_G(i, j)
            = \frac{m_2 \kemeny(G_2)}{m} - \frac{d_{v_2}\mu(G_2, v)}{2m}.
        \end{equation*}
        Finally, the sum over $i \in V_1$ with $j = v$ is
        \begin{align*}
            \frac{1}{4m} \sum_{i \in V_1} d_i d_v r_G(i, v)
            &= \frac{d_v}{4m} \sum_{i \in V_1} d_i r_1(i, v) = \frac{(d_{v_1}+  d_{v_2}) \mu(G_1, v)}{4m},
        \end{align*}
        which is similar to the sum over $i \in V_2$ with $j = v$,
        \begin{equation*}
            \frac{1}{4m} \sum_{i \in V_1} d_i d_v r_2(i, v)
            = \frac{(d_{v_1} + d_{v_2}) \mu(G_2, v)}{4m}.
        \end{equation*}
        
        Combining and doubling the appropriate terms,
        \begin{align*}
            \frac{1}{4m} \sum_{i, j} d_i d_j r_G(i, j)
            =&\: 2 \left(
                \frac{m_1 \mu(G_2, v) + m_2 \mu(G_1, v)}{2m}
           -\frac{d_{v_1}\mu(G_2,v)+d_{v_2}\mu(G_1,v)}{4m} \right)\\& + \frac{m_1 \kemeny(G_1)}{m} - \frac{d_{v_1}\mu(G_1, v)}{2m} 
            + \frac{m_2 \kemeny(G_2)}{m} - \frac{ d_{v_2} \mu(G_2, v)}{2m}\\ &+ 2 \left(
                \frac{(d_{v_1} + d_{v_2}) \mu(G_1, v)}{4m}
            \right) + 2 \left(
                \frac{(d_{v_1} + d_{v_2}) \mu(G_2, v)}{4m}
            \right) \\
            =&\: \frac{m_1(\kemeny(G_1) + \mu(G_2, v)) + m_2(\kemeny(G_2) + \mu(G_1, v))}{m}.
        \end{align*}\end{proof}
    
    We will also consider graphs which have multiple 1-separations and get an expression for Kemeny's constant for 
    such graphs. Before that we prove a useful Lemma.
    
    \begin{lemma}\label{thm:momentmany1seps}
    Let $G = G_1\oplus_{v_{1,2}}G_2\oplus_{v_{2,3}}\hdots\oplus_{v_{n-1,n}}G_n$. Then for some $v_{0,1} = v_0\in G_1$ we have
    \begin{equation*}
        \mu(G,v_0) = \sum_{i=1}^n\mu(G_i,v_{i-1,i}) + 2\sum_{i=2}^nr(v_{i-2,i-1},v_{i-1,i})\sum_{j=i}^nm_j.
    \end{equation*}
    \end{lemma}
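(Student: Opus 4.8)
The plan is to induct on $n$, the number of pieces in the iterated $1$-sum, with the base case $n=1$ being trivial (the second sum is empty and the claim reads $\mu(G_1,v_0)=\mu(G_1,v_0)$). For the inductive step I would peel off the last piece $G_n$. Write $H = G_1 \oplus_{v_{1,2}} \cdots \oplus_{v_{n-2,n-1}} G_{n-1}$, so that $G = H \oplus_{v_{n-1,n}} G_n$ with cut vertex $w := v_{n-1,n}$, and set $m_H = \sum_{i=1}^{n-1} m_i$. The induction hypothesis applies to $H$ and gives the stated formula for $\mu(H,v_0)$; I then need a single-$1$-sum identity expressing $\mu(G,v_0)$ in terms of $\mu(H,v_0)$, $\mu(G_n,w)$, and the resistances involved.

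The key computation is this single-step moment identity. Partition $V(G) = (V(H)\setminus\{w\}) \sqcup \{w\} \sqcup (V(G_n)\setminus\{w\})$ and split $\mu(G,v_0) = \sum_{i \in V(G)} d_i r_G(i,v_0)$ accordingly, being careful that the degree of $w$ in $G$ is $d_{w}^{H} + d_{w}^{G_n}$. For $i$ on the $H$-side, $r_G(i,v_0) = r_H(i,v_0)$ since $v_0 \in V(H)$ and $1$-sums do not change resistances within a piece; for $i$ on the $G_n$-side, Proposition~\ref{prop:1sepres} gives $r_G(i,v_0) = r_H(v_0,w) + r_{G_n}(w,i)$. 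Summing the $H$-side contributions (including $w$ with its $H$-degree) reconstitutes $\mu(H,v_0)$ exactly, since in $H$ the cut vertex $w = v_{n-1,n}$ is just an ordinary vertex. Summing the $G_n$-side contributions (including $w$ with its $G_n$-degree) gives $\sum_{i \in V(G_n)} d_i^{G_n}\big(r_H(v_0,w) + r_{G_n}(w,i)\big) = 2 m_n\, r_H(v_0,w) + \mu(G_n,w)$, using $\sum_i d_i^{G_n} = 2m_n$. Hence
\begin{equation*}
    \mu(G,v_0) = \mu(H,v_0) + \mu(G_n,w) + 2 m_n\, r_H(v_0,w).
\end{equation*}

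It remains to combine this with the induction hypothesis and identify $r_H(v_0,w)$. Since $v_0 \in V(G_1)$ and $w = v_{n-1,n} \in V(G_{n-1}) \subseteq V(H)$, repeated application of Proposition~\ref{prop:1sepres} along the chain $G_1 \oplus \cdots \oplus G_{n-1}$ gives $r_H(v_0,w) = \sum_{i=2}^{n-1} r(v_{i-2,i-1}, v_{i-1,i})$ — the telescoping sum of resistances across the successive cut vertices, with the convention $v_{0,1} = v_0$. Substituting the induction hypothesis for $\mu(H,v_0)$, adding $\mu(G_n, v_{n-1,n})$, and adding $2 m_n \sum_{i=2}^{n-1} r(v_{i-2,i-1},v_{i-1,i})$ to the existing double sum, every coefficient $\sum_{j=i}^{n-1} m_j$ for $i \le n-1$ gains a $+m_n$ to become $\sum_{j=i}^{n} m_j$, and a new term with $i=n$ and coefficient $m_n = \sum_{j=n}^n m_j$ appears; this is exactly the claimed expression for $\mu(G,v_0)$. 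I expect the main obstacle to be purely bookkeeping: keeping the degree of each cut vertex attributed to the correct piece and verifying that the index shifts in the double sum line up, rather than any conceptual difficulty, since the two tools needed (additivity of resistance across a cut vertex and $\sum_i d_i = 2m$) are both available.
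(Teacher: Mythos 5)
Your proposal is correct and follows essentially the same strategy as the paper: induction on the number of blocks, powered by the two-block moment identity $\mu(H\oplus_w G_n, v_0) = \mu(H,v_0) + \mu(G_n,w) + 2m_n\, r_H(v_0,w)$, which the paper establishes as its $n=2$ base case by exactly the vertex-partition computation you describe. The only structural difference is the grouping in the inductive step: you peel $G_n$ off the chain $H = G_1\oplus\cdots\oplus G_{n-1}$ and then invoke the hypothesis on $H$, which forces you to additionally evaluate $r_H(v_0, v_{n-1,n})$ by telescoping Proposition~\ref{prop:1sepres} along the chain; the paper instead applies the inductive hypothesis to the coarser decomposition in which $G_k\oplus_{v_{k,k+1}} G_{k+1}$ is treated as a single last block, so the needed resistance coefficients already sit inside the hypothesis and only $\mu(G_k\oplus G_{k+1}, v_{k-1,k})$ has to be expanded via the two-block identity. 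Both routes work. One small slip to fix: your telescoping formula should read $r_H(v_0, v_{n-1,n}) = \sum_{i=2}^{n} r(v_{i-2,i-1}, v_{i-1,i})$, not $\sum_{i=2}^{n-1}$ --- the final leg $r_{G_{n-1}}(v_{n-2,n-1}, v_{n-1,n})$, travelled inside $G_{n-1}$, is the $i=n$ term, and it is precisely this term that produces the ``new term with $i=n$'' that you correctly say must appear in the double sum; with the upper limit as written, that term would be missing and the identity would not close.
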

    \begin{proof}
    We proceed by induction on $n$.  For $n=1$ this is immediate.
    
    Suppose $n=2$. In this case let $d_G(i)$ denote the degree of vertex $i$ in $V(G).$ Then we have

    \begin{align*}
        \mu(G,v_0) =& \sum_{i\in G}d_G(i)r(i,v_0)\\
        =&\sum_{\substack{i\in G_1\\i\neq v_{1,2}}}d_{G_1}(i)r_1(i,v_0) + (d_{G_1}(v_{1,2}) + d_{G_2}(v_{1,2}))r(v_{1,2},v_0)\\
        &+ \sum_{\substack{i\in G_2\\i\neq v_{1,2}}}d_{G_2}(i)(r_2(i,v_{1,2})+r(v_{1,2},v_0))\\
        =&\:\mu(G_1,v_0)+\mu(G_2,v_{1,2})+ 2m_2r(v_{1,2}, v_0).
    \end{align*}
    Now suppose for some $k\geq2$ the expression is true. Let $G = G_1\oplus_{v_{1,2}}\cdots \oplus_{v_{k-1,k}}\left(G_k\oplus_{v_{k,k+1}}G_{k+1}\right)$. Note that $|E\left(G_k\oplus_{v_{k,k+1}}G_{k+1}\right)| = m_k+m_{k+1}.$ By the inductive hypothesis we have 
    \begin{align*}
        \mu(G,v_0) =&\sum_{i=1}^{k-1}\mu(G_i,v_{i-1,i}) + 2\sum_{i=2}^{k-1}r(v_{i-2,i-1},v_{i-1,i})\sum_{j=i}^{k-1}m_j + \mu\left(\left(G_k\oplus_{v_{k,k+1}}G_{k+1}\right),v_{k-1,k}\right)\\
        &+2\sum_{i=2}^{k}r(v_{i-2,i-1},v_{i-1,i})(m_k+m_{k+1})
    \end{align*}
    But $\mu\left(\left(G_k\oplus_{v_{k,k+1}}G_{k+1}\right),v_{k-1,k}\right) = \mu(G_k,v_{k-1,k}) + \mu(G_{k+1},v_{k,k+1}) + 2m_{k+1}r(v_{k-1,k}, v_{k,k+1})$.
    Substituting this into the above expression we will get the result.
    \end{proof}
    
    
    \hidden{
    \begin{theorem}\label{thm:kemenytwo1seps}
    Let $G=G_1\oplus_{v_{1,2}}G_2\oplus_{v_{2,3}}G_3$. Then we have
    \begin{align*}
        \kemeny(G) =&\: \frac{m_1[\kemeny(G_1)+\mu(G_2, v_{1,2}) + \mu(G_3, v_{2,3})] + m_2[\kemeny(G_2) + \mu(G_1, v_{1,2}) + \mu(G_3, v_{2,3})]}{m_1+m_2+m_3}\\
        &+\frac{m_3[\kemeny(G_3) + \mu(G_1, v_{1,2})+\mu(G_2, v_{2,3})]+2m_1m_3r(v_{1,2}, v_{2,3})}{m_1+m_2+m_3}.
    \end{align*}
    \end{theorem}
    \begin{proof}
    By Theorem \ref{thm:kem1sepformula} we have the following.
    \begin{align*}
        \kemeny(G) =&\:\frac{m_1[\kemeny(G_1) + \mu((G_2\oplus G_3), v_{1,2})] + (m_2+m_3)[\kemeny(G_2\oplus G_3) +  \mu(G_1, v_{1,2})]}{m_1+m_2+m_3}
    \end{align*}
    Using Theorem \ref{thm:kem1sepformula} again and Theorem \ref{thm:momentmany1seps} we get
    \[\kemeny(G_2\oplus G_3) = \frac{m_2[\kemeny(G_2) + \mu(G_3, v_{2,3})] + m_3[\kemeny(G_3) + \mu(G_2, v_{2,3})]}{m_2+m_3}\]
    \[\mu((G_2\oplus G_3), v_{1,2}) = \mu(G_2, v_{1,2}) + \mu(G_3, v_{2,3}) + 2m_3r(v_{1,2},v_{2,3}).\]
    Substituting in these expressions yields the result.
    \end{proof}
    }
    
    Now we are ready to find Kemeny's constant for a graph with multiple 1-separations.
    \begin{theorem}\label{thm:kemenymany1seps}
    Let $G = G_1\oplus_{v_{1,2}}G_2\oplus_{v_{2,3}}\cdots\oplus_{v_{n-1,n}}G_n$. Let $q_{i,j}$ be as follows.
    \begin{align*}
        q_{i,j} =& \begin{cases} (j-1,j) &\text{ $j>i$}\\
        (j,j+1) &\text{ $j < i$}\end{cases}
    \end{align*}
    Then Kemeny's constant of $G$ is given by the following.
    \begin{equation*}
        \kemeny(G) = \frac{\sum\limits_{i=1}^{n}m_i\left(\kemeny(G_i)+\sum\limits_{j\neq i}\mu(G_j, v_{q_{i,j}})\right)+2\sum\limits_{\substack{1\leq i < j\leq n\\ j-i\geq2}}m_im_jr(v_{i,i+1},v_{j-1,j})}{\sum\limits_{i=1}^nm_i}.
   \end{equation*}
    \end{theorem}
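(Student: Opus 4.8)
The plan is to prove the formula by induction on the number of blocks $n$. The base case $n=1$ is trivial (both sides equal $\kemeny(G_1)$), and the case $n=2$ is precisely Theorem \ref{thm:kem1sepformula}: one checks that the double sum over $j-i\ge 2$ is empty and that $q_{1,2}=q_{2,1}=(1,2)$.

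For the inductive step, I would write $G=H\oplus_{v_{n-1,n}}G_n$ where $H=G_1\oplus_{v_{1,2}}\cdots\oplus_{v_{n-2,n-1}}G_{n-1}$ is the chain of the first $n-1$ blocks, with $M:=|E(H)|=\sum_{i=1}^{n-1}m_i$. Applying Theorem \ref{thm:kem1sepformula} to this 1-separation gives
\[
\kemeny(G)=\frac{M\kemeny(H)+M\,\mu(G_n,v_{n-1,n})+m_n\kemeny(G_n)+m_n\,\mu(H,v_{n-1,n})}{M+m_n},
\]
and the denominator is $\sum_{i=1}^n m_i$. Into this I would substitute the inductive hypothesis for $\kemeny(H)$ (its denominator $M$ cancels against the leading factor), and Lemma \ref{thm:momentmany1seps} for $\mu(H,v_{n-1,n})$. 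A point of care: since Lemma \ref{thm:momentmany1seps} is stated for the root lying in the first block, I would apply it to the \emph{reversed} chain $G_{n-1}\oplus_{v_{n-2,n-1}}\cdots\oplus_{v_{1,2}}G_1$ rooted at $v_{n-1,n}\in G_{n-1}$, and then reindex.

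Most of the verification is then routine bookkeeping, which I would organize by the ``rows'' $i=1,\dots,n$ of the target sum. The rows $i\le n-1$ receive their $j\le n-1$ moment terms and all their resistance terms with $j\le n-1$ from $\kemeny(H)$; their missing $j=n$ moment terms come from $M\,\mu(G_n,v_{n-1,n})=\sum_{i=1}^{n-1}m_i\mu(G_n,v_{q_{i,n}})$, using that $q_{i,n}=(n-1,n)$. The row $i=n$ is assembled from $m_n\kemeny(G_n)$ together with the ``first part'' of Lemma \ref{thm:momentmany1seps}, which after reindexing the reversed chain equals $\sum_{j=1}^{n-1}\mu(G_j,v_{q_{n,j}})$ (using $q_{n,j}=(j,j+1)$), contributing $m_n\sum_{j\ne n}\mu(G_j,v_{q_{n,j}})$.

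The main obstacle is matching the double sum of effective resistances, since Lemma \ref{thm:momentmany1seps} produces only ``adjacent-separator'' resistances $r(v_{k,k+1},v_{k+1,k+2})$ weighted by partial sums $\sum_{l=1}^k m_l$, whereas the target asks for ``spanning'' resistances $r(v_{i,i+1},v_{n-1,n})$. The resolution is to interchange the order of summation, turning the contribution $m_n\cdot 2\sum_{k=1}^{n-2}r(v_{k,k+1},v_{k+1,k+2})\sum_{l=1}^k m_l$ into $2\sum_{l=1}^{n-2}m_l m_n\sum_{k=l}^{n-2}r(v_{k,k+1},v_{k+1,k+2})$, and then to use the telescoping identity
\[
\sum_{k=l}^{n-2}r(v_{k,k+1},v_{k+1,k+2})=r(v_{l,l+1},v_{n-1,n}),
\]
which follows from the Cut Vertex Theorem (Proposition \ref{prop:1sepres}): each $r(v_{k,k+1},v_{k+1,k+2})$ equals the within-block resistance $r_{G_{k+1}}(v_{k,k+1},v_{k+1,k+2})$, and iterating Proposition \ref{prop:1sepres} shows these block resistances add along the path of blocks. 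Combining this with the resistance terms inherited from $\kemeny(H)$ produces exactly $2\sum_{1\le i<j\le n,\ j-i\ge 2}m_im_jr(v_{i,i+1},v_{j-1,j})$, which finishes the induction.
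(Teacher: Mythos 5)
Your proof is correct, and it organizes the induction differently enough from the paper's that the comparison is worth making. Both arguments induct on the number of blocks and rest on the same two ingredients, Theorem \ref{thm:kem1sepformula} and Lemma \ref{thm:momentmany1seps}, but you peel off the \emph{last atomic block}: you view $G$ as the two-block 1-sum $H\oplus_{v_{n-1,n}}G_n$ with $H$ the $(n-1)$-chain, apply Theorem \ref{thm:kem1sepformula} once at the top level, and then expand $\kemeny(H)$ by the inductive hypothesis and $\mu(H,v_{n-1,n})$ by the full-strength, arbitrary-length version of Lemma \ref{thm:momentmany1seps} applied to the reversed chain. This forces the interchange of summation and the telescoping identity $\sum_{k=l}^{n-2}r(v_{k,k+1},v_{k+1,k+2})=r(v_{l,l+1},v_{n-1,n})$, which you correctly justify via Proposition \ref{prop:1sepres}. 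The paper instead merges the last two atomic blocks into a compound block $H_k=G_k\oplus_{v_{k,k+1}}G_{k+1}$, applies the inductive hypothesis to the resulting chain of $k$ blocks, and unpacks $\kemeny(H_k)$ and $\mu(H_k,v_{k-1,k})$ using only the \emph{two-block} special cases of the same two results; its resistance bookkeeping is then local, combining one adjacent-separator term at a time with a single application of Proposition \ref{prop:1sepres}. Your route buys a cleaner top-level structure at the cost of invoking the moment lemma in full generality and handling the reversal/reindexing and the global telescoping, which are exactly the two delicate points, and you address both explicitly; the paper's route keeps every auxiliary identity at the two-block level but works with a chain containing a compound block. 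Both are complete proofs.
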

    \begin{proof}
    For $n=1$ this is true. For $n=2$ this is true by Theorem \ref{thm:kem1sepformula}. 
    
    
    Suppose this expression is true for some $n=k\geq2$. Let 
    \[G = G_1 \oplus_{v_{1,2}}G_2 \oplus_{v_{2,3}}\hdots\oplus_{v_{k-1,k}}\left(G_k\oplus_{v_{k,k+1}}G_{k+1}\right).\]
    Let $H_i = G_i$ for $i < k$ and let $H_k = (G_k\oplus_{v_{k,k+1}}G_{k+1})$. Then we can express $G$ as
    \[G = H_1 \oplus_{v_{1,2}}H_2 \oplus_{v_{2,3}}\hdots\oplus_{v_{k-1,k}}H_k.\]
    If $m_i = |E(G_i)|$ and $\tilde{m}_i = |E(H_i)|$ then $m_i = \tilde{m}_i$ for $i < k$ and $\tilde{m}_k = m_k + m_{k+1}.$ Notice by Theorem \ref{thm:kem1sepformula} and Theorem \ref{thm:momentmany1seps} we have the following.
    \begin{equation}\label{eq:keminductive}
    \kemeny(H_k) = \frac{m_k\left(\kemeny(G_k) + \mu(G_{k+1},v_{k,k+1})\right) + m_{k+1}\left(\kemeny(G_{k+1}) + \mu(G_k, v_{k,k+1})\right)}{m_k+m_{k+1}}
    \end{equation}
    
    \begin{equation}\label{eq:muinductive}
    \mu(H_k, v_{k-1,k}) = \mu(G_k, v_{k-1,k}) + \mu(G_{k+1}, v_{k, k+1}) + 2m_{k+1}r(v_{k-1,k}, v_{k,k+1}).
    \end{equation}
    
    By the inductive hypothesis we have the following.
    \begin{equation}\label{eq:inductivetotalkem}
        \kemeny(G) = \frac{\sum_{i=1}^k\tilde{m}_i\left(\kemeny(H_i)+\sum_{j\neq i}\mu(H_j, v_{q_{i,j}})\right)+2\sum_{\substack{1\leq i < j\leq k\\j-i\geq 2}}\tilde{m}_i\tilde{m}_jr(v_{i,i+1},v_{j-1,j})}{\sum_{i=1}^k\tilde{m}_i}
    \end{equation}
    
    Now we will examine the numerator terms of (\ref{eq:inductivetotalkem}). Using (\ref{eq:muinductive}), for $i < k$ a term in the first term in the numerator looks like the following.
    \begin{equation}\label{eq:kemnumsumi_less_k}
        m_i\left(\kemeny(G_i) + \sum_{\substack{j\neq i\\j<k}}\mu(G_j, v_{q_{i,j}}) + \mu(G_k, v_{k-1,k}) + \mu(G_{k+1}, v_{k,k+1}) + 2m_{k+1}r(v_{k-1,k}, v_{k,k+1})\right)
    \end{equation}
    This gives the $i$-th term in the first sum of (\ref{eq:inductivetotalkem}) and a resistance term.
    
    Now, for $i=k$, (\ref{eq:keminductive}) gives that a term in the first sum of (\ref{eq:inductivetotalkem}) looks like the following.
    \begin{equation}\label{eq:kemnumsumk_is_i}
        m_k\left(\kemeny(G_k) + \sum_{i\neq k}\mu(G_i, v_{q_{i,k}})\right) + m_{k+1}\left(\kemeny(G_{k+1}) + \sum_{i\neq k+1}\mu(G_i, v_{q_{i,k}})\right)
    \end{equation}
    
    Now consider the second term of (\ref{eq:inductivetotalkem}). Terms in that sum that involve $H_k$ will look like the following.
    \begin{equation}\label{eq:incompleteresistance}
        2m_im_kr(v_{i,i+1}, v_{k-1,k}) + 2m_im_{k+1}r(v_{i,i+1}, v_{k-1,k})
    \end{equation}
    
    But by combining (\ref{eq:incompleteresistance}) with the resistance term from (\ref{eq:kemnumsumi_less_k}) and using Proposition \ref{prop:1sepres} we get
    \begin{align}
        &2m_im_kr(v_{i,i+1}, v_{k-1,k}) + 2m_im_{k+1}r(v_{i,i+1}, v_{k-1,k}) + 2m_im_{k+1}r(v_{k-1,k}, v_{k,k+1})\nonumber\\
    =&\: 2m_im_kr(v_{i,i+1}, v_{k-1,k}) + 2m_im_{k+1}r(v_{i,i+1}, v_{k,k+1}).\label{eq:kemresistancebit}
    \end{align}
    Thus combining (\ref{eq:kemnumsumi_less_k}), (\ref{eq:kemnumsumk_is_i}), and (\ref{eq:kemresistancebit})
    we get that
    \[\kemeny(G) = \frac{\sum_{i=1}^{k+1}m_i\left(\kemeny(G_i)+\sum_{j\neq i}\mu(G_j, v_{q_{i,j}})\right)+2\sum_{\substack{1\leq i < j\leq k+1\\j-i\geq 2}}m_im_jr(v_{i,i+1},v_{j-1,j})}{\sum_{i=1}^{k+1}m_i}\]
    and the result is proven.
    
    \end{proof}
    
    Using the previous Theorem to consider a graph with multiple components all 1-summed at the same vertex gives the following result.
    \begin{corollary}\label{cor:kemenymanyG1v}
    Let $G = G_1\oplus_vG_2\oplus_v\hdots\oplus_vG_n$ and $m_i = |E(G_i)|$. Then Kemeny's constant of G is
    \begin{equation*}
        \kemeny(G) = \frac{\sum\limits_{i=1}^nm_i\left(\kemeny(G_i) + \sum\limits_{j\neq i}\mu(G_j,v)\right)}{\sum\limits_{i=1}^nm_i}.
    \end{equation*}
    \end{corollary}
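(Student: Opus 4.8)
The plan is to derive this as a direct specialization of Theorem \ref{thm:kemenymany1seps}. First I would observe that the graph $G = G_1\oplus_v G_2\oplus_v\cdots\oplus_v G_n$, in which all of the blocks are glued along a single common vertex $v$, is precisely the chain $G_1\oplus_{v_{1,2}}G_2\oplus_{v_{2,3}}\cdots\oplus_{v_{n-1,n}}G_n$ of Theorem \ref{thm:kemenymany1seps} in the special case $v_{1,2}=v_{2,3}=\cdots=v_{n-1,n}=v$: for each intermediate block $G_i$ its ``incoming'' and ``outgoing'' cut vertices coincide, so the iterated 1-sum simply identifies the marked vertex of every $G_i$ with $v$. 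Since $v\in V(G_i)$ for every $i$ and the edge sets of the $G_i$ are pairwise disjoint, $\sum_i m_i = |E(G)|$, which matches the denominator on the right-hand side.

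Next I would substitute $v_{i,i+1}=v$ into the formula of Theorem \ref{thm:kemenymany1seps}. The index pair $q_{i,j}$ equals $(j-1,j)$ when $j>i$ and $(j,j+1)$ when $j<i$, so in either case $v_{q_{i,j}}$ is one of the cut vertices, hence equal to $v$; thus $\sum_{j\neq i}\mu(G_j,v_{q_{i,j}}) = \sum_{j\neq i}\mu(G_j,v)$. The cross terms in the numerator each carry the factor $r(v_{i,i+1},v_{j-1,j}) = r_G(v,v) = 0$ and therefore vanish identically. What survives is exactly the claimed expression.

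The only point needing a little care — and the closest thing to an obstacle — is verifying that the reduction to the chain form of Theorem \ref{thm:kemenymany1seps} is legitimate: that repeatedly taking a 1-sum at a vertex which is then reused as the next cut vertex really does produce the ``star'' graph $G_1\oplus_v\cdots\oplus_v G_n$, and that the moments $\mu(G_j,v_{q_{i,j}})$ appearing in the theorem are the moments of the blocks $G_j$ computed intrinsically within $G_j$ (which they are, since $v$ is a vertex of every $G_j$). Once this bookkeeping is in place, no computation beyond the two substitutions above is required. Alternatively, one could give a short self-contained induction on $n$ using Theorem \ref{thm:kem1sepformula} together with the identity $\mu(G_2\oplus_v\cdots\oplus_v G_n,v)=\sum_{i=2}^n\mu(G_i,v)$, which is itself the $r(v,v)=0$ case of Lemma \ref{thm:momentmany1seps}; but the specialization argument is cleaner and essentially immediate.
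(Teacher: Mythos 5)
Your proposal is correct and matches the paper's own argument: the paper likewise obtains the corollary by specializing Theorem \ref{thm:kemenymany1seps} to the case where all cut vertices equal $v$, so that every cross term carries the factor $r(v,v)=0$ and the moment sums reduce to $\sum_{j\neq i}\mu(G_j,v)$. Your additional bookkeeping remarks are sound but not needed beyond what the paper states.
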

    \begin{proof}
    If $v_{i,i+1} = v_{j-1,j} = v$ then $r(v_{i,i+1}, v_{j-1,j}) = 0$. The result then follows directly from Theorem \ref{thm:kemenymany1seps}.
    \end{proof}
    
    \hidden{
    The following expression will be useful when considering the idea of Braess edges and how 1-separations in graphs affect the existence of such edges.
    
    Let $G = G_1\oplus_v G_2$ and $m_1$ and
    $m_2$ denote the number of edges in $G_1$ and $G_2$, respectively. We consider a set of edges 
    $\mathscr{L} = \{j \sim k: j, k \in V_2 \text{ and } j \sim k \notin E\}_{i=1}^l$ and let $\hat G$ denote the
    graph given by adding $\mathscr L$ to $G$. Note that the choice of $G_2$ is arbitrary, and the following
    result applies to a set of edges $\mathscr L$ in either part of the separation. $|\hat E| = m + l$
    
    \begin{theorem}\label{thm:kemdiff}
    Let $G$ and $\hat G$ be as defined above. Then we have the following.
        \begin{align*}
            \kemeny(\hat G) - \kemeny(G) &=\frac{lm_1(\mu(G_1, v) - \kemeny(G_1))}{m(m+l)} + \frac{Am_1^2 + ((A+C)m_2 + Bl)m_1 + C(m_2^2 + lm_2)}{m(m+l)}
        \end{align*}
    Where $A = \mu(\hat{G}_2,v) - \mu(G_2,v)$, $B = \kemeny(\hat{G}_2) - \mu(G_2, v)$, and $C = \kemeny(\hat{G}_2) - \kemeny(G_2).$    
    \end{theorem}
    \begin{proof}
    Theorem \ref{thm:kem1sepformula} gives $\kemeny(G)$ and $\kemeny(\hat G)$ so all we must do is subtract one from the other. For convenience let $\mu(G) = \mu(G,v)$.
    \begin{align*}
    \kemeny(\hat G) - \kemeny(G) =&\: \frac{m_1[\kemeny(G_1) + \mu(\hat{G}_2)] + (m_2+l)[\kemeny(\hat{G}_2+\mu(G_1)]}{m+l}\\
    &- \frac{m_1\left(\kemeny(G_1) + \mu(G_2)\right) + m_2\left(\kemeny(G_2) + \mu(G_1)\right)}{m}\\
    =&\:\frac{m\left[m_1[\kemeny(G_1) + \mu(\hat{G}_2)] + (m_2+l)[\kemeny(\hat{G}_2+\mu(G_1)]\right]}{m(m+l)}\\
    &-\frac{(m+l)\left[m_1\left[\kemeny(G_1) + \mu(G_2)\right] + m_2\left[\kemeny(G_2) + \mu(G_1)\right]\right]}{m(m+l)}\\
    =&\:\frac{lm_1(\mu(G_1)-\kemeny(G_1))}{m(m+l)}\\
    &+\frac{m(m_2+l)\kemeny(\hat{G}_2) - (m+l)m_2\kemeny(G_2) + mm_1\mu(\hat{G}_2) - (m+l)m_1\mu(G_2)}{m(m+l)}\\
    =&\:\frac{lm_1(\mu(G_1)-\kemeny(G_1))}{m(m+l)}\\
    &+\frac{mm_1(\mu(\hat{G}_2)-\mu(G_2)) + lm_1(\kemeny(\hat{G}_2)-\mu(G_2))}{m(m+l)}\\
    &+ \frac{mm_2(\kemeny(\hat{G}_2)-\kemeny(G_2)) + lm_2(\kemeny(\hat{G}_2) - \kemeny(G_2))}{m(m+l)}\\
    =&\:\frac{lm_1(\mu(G_1, v) - \kemeny(G_1))}{m(m+l)} + \frac{Am_1^2 + ((A+C)m_2 + Bl)m_1 + C(m_2^2 + lm_2)}{m(m+l)}
    \end{align*}
    \hidden{
    GET RID OF BELOW PROOF ONCE SURE WE HAVE THE NEW ONE UP AND READY
    
    To hopefully be a little easier on the eyes, throughout the proof we will change the subscript $G_1$ and $G_2$ to simply $1$ and $2$. Now by Theorem \ref{thm:kemeny} we have the following.
    \begin{align*}
        \kemeny(G) =& \frac{d^TRd}{4m}\\
        =&\frac{1}{4m}\left(\sum_{i,j\in G_1\setminus\{v\}}d_id_j\rone(i,j) + 2\sum_{j\in G_1\setminus\{v\}}d_j(d_{v_1}+d_{v_2})\rone(j,v)\right. \\
        &+ \left. \sum_{i,j\in G_2\setminus\{v\}}d_id_j\rtwo(i,j) + 2\sum_{j\in G_2\setminus\{v\}}d_j(d_{v_1}+d_{v_2})\rtwo(j,v)\right. \\
        &+ \left. 2\sum_{\substack{i\in G_1\setminus\{v\}\\j\in G_2\setminus\{v\}}}d_id_j(\rone(i,v) + \rtwo(v,j))\right)\\
        =& \frac{1}{4m}\left(d^T_1R_1d_1 + 2d_{v_2}\sum_{j\in G_1\setminus{\{v\}}}d_j\rone(j,v) +d^T_2R_2d_2 +2d_{v_1}\sum_{j\in G_2\setminus{\{v\}}}d_j\rtwo(j,v)\right.\\
        &+\left.2\sum_{i\in G_1\setminus{\{v\}}}d_i\rone(i,v)\sum_{j\in G_2\setminus{\{v\}}}d_j + 2\sum_{i\in G_1\setminus{\{v\}}}d_i\sum_{j\in G_2\setminus{\{v\}}}d_j\rtwo(j,v)\right)\\
        =&\frac{1}{4m}\left(d^T_1R_1d_1 + 2d_{v_2}d^T_1R_1e_{v_1} + d^T_2R_2d_2 + 2d_{v_1}d^T_2R_2e_{v_2}\right.\\
        &\left.+2d^T_1R_1e_{v_1}(2m_2-d_{v_2}) + 2(2m_1-d_{v_1})d^T_2R_2e_{v_2} \right)\\
        =&\frac{1}{4m}\left(d^T_1R_1d_1 + 4m_2d^T_1R_1e_{v_1} + d^T_2R_2d_2 + 4m_1d^T_2R_2e_{v_2} \right)
    \end{align*}
    Now assume that for the $l$ edges added in $G_2$, there are $0\leq\alpha\leq l$ edges added to $v$. Then for $\kemeny(\hat G)$ we have the following.
    \begin{align*}
        \kemeny(\hat G) =& \frac{\hat{d}^T\hat R\hat d}{4(m+l)}\\
        =&\frac{1}{4(m+l)}\left(\sum_{i,j\in G_1\setminus\{v\}}d_id_j\rone(i,j) + 2\sum_{j\in G_1\setminus\{v\}}d_j(d_{v_1}+d_{v_2}+\alpha)\rone(j,v)\right. \\
        &+ \left. \sum_{i,j\in \hat{G_2}\setminus\{v\}}\hat{d_i}\hat{d_j}\hat{\rtwo}(i,j) + 2\sum_{j\in \hat{G_2}\setminus\{v\}}\hat{d_j}(d_{v_1}+d_{v_2}+\alpha)\hat{\rtwo}(j,v)\right. \\
        &+ \left. 2\sum_{\substack{i\in G_1\setminus\{v\}\\j\in \hat{G_2}\setminus\{v\}}}d_i\hat{d_j}(\rone(i,v) + \hat{\rtwo}(v,j))\right)\\
        =&\frac{1}{4(m+l)}\left(d^T_1R_1d_1 + 2(d_{v_2}+\alpha)\sum_{j\in G_1\setminus{\{v\}}}\rone(j,v) + \hat{d^T_2}\hat{R_2}\hat{d_2} + 2d_{v_1}\sum_{j\in\hat{G_2}\setminus{\{v\}}}\hat{d_j}\hat\rone(j,v) \right.\\
        &\left.+2\sum_{i\in G_1\setminus{\{v\}}}d_i\rone(i,v)\sum_{j\in\hat G_2\setminus{\{v\}}}\hat d_i + 2\sum_{i\in G_1\setminus{\{v\}}}d_i\sum_{j\in\hat G_2\setminus{\{v\}}}\hat d_j\hat\rtwo(j,v)\right)\\
        =&\frac{1}{4(m+l)}\left(d^T_1R_1d_1 + 2(d_{v_2}+\alpha)d^T_1R_1e_{v_1} + \hat d^T_2\hat R_2\hat d_2 +2d_{v_1}\hat d^T_2\hat R_2e_{v_2}\right.\\
        &\left.+2d^T_1R_1e_{v_1}(2(m_2+l)-(d_{v_2}+\alpha)) + 2(2m_1-d_{v_1})\hat d^T_2\hat R_2e_{v_2}\right)\\
        =&\frac{1}{4(m+l)}\left(d^T_1R_1d_1 + 4(m_2+l)d^T_1R_1e_{v_1} + \hat d^T_2\hat R_2\hat d_2 + 4m_1\hat d^T_2\hat R_2e_{v_2}\right)
    \end{align*}
    Then for the difference we have the following.
    \begin{align*}
        \kemeny(\hat G) - \kemeny(G) =&\: \frac{1}{4(m+l)}\left[d^T_1R_1d_1 + 4(m_2+l)d^T_1R_1e_{v_1} + \hat d^T_2\hat R_2\hat d_2 + 4m_1\hat d^T_2\hat R_2e_{v_2}\right]\\
        &-\frac{1}{4m}\left[d^T_1R_1d_1 + 4m_2d^T_1R_1e_{v_1} + d^T_2R_2d_2 + 4m_1d^T_2R_2e_{v_2} \right]\\
        =&\: \frac{1}{4m(m+l)}\left[(m-(m+l))d^T_1R_1d_1 + 4d^T_1R_1e_{v_1}(m(m_2+l)-(m+l)m_2)\right.\\
        &\left.+m\hat d^T_2\hat R_2\hat d_2 - (m+l)d^T_2R_2d_2 + 4mm_1\hat d^T_2\hat R_2e_{v_2} - 4(m+l)m_1d^T_2R_2e_{v_2}\right]\\
        =&\frac{4d^T_1R_1e_{v_1}(l(m-m_2))-ld^T_1R_1d_1}{4m(m+l)}\\
        &+ \frac{1}{4m(m+l)}\left[m\hat d^T_2\hat R_2\hat d_2 - (m+l)d^T_2R_2d_2 + 4m_1(m\hat d^T_2\hat R_2e_{v_2} - (m+l)d^T_2R_2e_{v_2}\right]\\
        =&\frac{l\left[4m_1d^T_1R_1e_{v_1} - d^T_1R_1d_1\right]}{4m(m+l)}\\
        &+\frac{1}{4m(m+l)}\left[(m_1+m_2)4(m_2+l)\kemeny(\hat G_2) - (m_1+m_2+l)4m_2\kemeny(G_2)\right.\\
        &\left.+4m_1(m_1+m_2)\hat d^T_2\hat R_2e_{v_2} - 4m_1(m_1+m_2+l)d^T_2R_2e_{v_2}\right]\\
        =&\frac{l\left[4m_1d^T_1R_1e_{v_1} - d^T_1R_1d_1\right]}{4m(m+l)} + \frac{1}{m(m+l)}\left[(m_1m_2+lm_1+m^2_2+lm_2)\kemeny(\hat G_2) \right.\\
        &-(m_1m_2 + m^2_2+lm_2)\kemeny(G_2) + (m^2_1+m_1m_2)\hat d^T_2\hat R_2e_{v_2} \\
        &-\left.(m^2_1+m_1m_2+lm_1)d^T_2R_2e_{v_2}\right]\\
        =&\frac{l\left[4m_1d^T_1R_1e_{v_1} - d^T_1R_1d_1\right]}{4m(m+l)} +\frac{1}{m(m+l)}\left[(m^2_1 + m_1m_2)\left(\hat d^T_2\hat R_2e_{v_2} - d^T_2R_2e_{v_2}\right)\right.\\
        &\left.+lm_1\left(\kemeny(\hat G_2)-d^T_2R_2e_{v_2}\right) + (m_1m_2 + m^2_2 +lm_2)\left(\kemeny(\hat G_2) - \kemeny(G_2)\right)\right]\\
        =&\frac{l\left[4m_1d^T_1R_1e_{v_1} - d^T_1R_1d_1\right]}{4m(m+l)} + \frac{Am^2_1 + ((A+C)m_2 + Bl)m_1 + C(m^2_2+lm_2)}{m(m+l)}
    \end{align*}
    }
    Thus we have arrived at the result, taking $A = \mu(\hat{G}_2,v) - \mu(G_2,v)$, $B = \kemeny(\hat{G}_2) - \mu(G_2, v)$, and $C = \kemeny(\hat{G}_2) - \kemeny(G_2).$ 
    \end{proof}
    \begin{corollary}\label{cor:braessset}
    If $Am_1^2 + ((A+C)m_2 + Bl)m_1 + C(m_2^2 + lm_2) > 0$ then $\mathscr{L}$, the set of edges added, is a ``Braess set''.
    \end{corollary}
    It was shown in \cite{ciardo2020braess} that the first term in Theorem \ref{thm:kemdiff} is non negative, so if the second term is positive then $\mathscr{L}$ is a Braess set of edges. 

    Notably, the Braess set is not closed under union nor intersection; for example, the path on $7$ vertices
    with the natural labelling $\{1, 2, \ldots, 7\}$ admits the Braess set $\{(1, 3), (5, 7)\}$, but
    neither of the singleton sets $\{(1, 3)\}$ nor $\{(5, 7)\}$ are Braess themselves. Braess sets are an
    isolated occurrence, it seems.

    }

\section{Applications of the 1-Separation Formula}
In this section we will demonstrate how these results can simplify the computation of Kemeny's constant for graphs with a 1-separation. We will obtain an expression for Kemeny's constant of barbell graphs. We also provide a result about trees with maximal Kemeny's constant. Now, we state without proof the resistance,  Kemeny's constant, and moment of some graphs that are both easy to compute and will prove useful.  See \cite{bapat2010graphs} for details about how to compute effective resistance.

\begin{prop}\label{prop:someresistances}
Let $K_n$ and $P_n$ be, respectively, the complete graph and path graph on $n$ vertices $\{1, 2, \hdots, n\}$. Then 
\begin{gather*}
     r_{K_n}(i,j) =\frac2n\\
     r_{P_n}(i,j) = d(i,j) 
\end{gather*}
where $d(i,j)$ is the distance from $i$ to $j$.
\end{prop}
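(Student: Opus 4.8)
The statement asks for two classical facts: the effective resistance between any two vertices of $K_n$ is $2/n$, and the effective resistance between two vertices of $P_n$ equals the graph distance. Both follow from standard electrical network principles, so the plan is to invoke symmetry and series/parallel reductions rather than the pseudoinverse formula.

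\medskip
\noindent\textbf{Proof proposal.} For $K_n$, the plan is to use vertex-transitivity together with the superposition principle. Inject one unit of current at vertex $i$ and extract it at $j$; by symmetry of $K_n$ under the automorphisms fixing $\{i,j\}$, the potential is constant on $V\setminus\{i,j\}$, so no current flows along edges between those vertices. Hence the only active edges are the single edge $\{i,j\}$ and the $2(n-2)$ edges joining $i$ or $j$ to the common middle potential; these reduce to three parallel resistors (the direct edge of resistance $1$, and two branches of resistance $1$ in series through each middle vertex, i.e.\ $(n-2)$ such length-$2$ paths in parallel giving resistance $\tfrac{2}{n-2}$). Computing $\left(1 + \tfrac{1}{1} + \tfrac{n-2}{2}\right)^{-1}$... more cleanly: inject $1$ unit at $i$, take it out at $j$; by symmetry each of the $n-1$ edges at $i$ carries the same current except we must be careful — a slicker route is the well-known formula $r_G(i,j) = \frac{2}{n}\sum_{k}\frac{|\langle e_i-e_j, x_k\rangle|^2}{\lambda_k}$ using the eigenvalues of $L(K_n)$, which are $0$ and $n$ (with multiplicity $n-1$); then $r_{K_n}(i,j) = \frac{1}{n}\|e_i-e_j\|^2 = \frac{2}{n}$ directly. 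I would present this eigenvalue computation as the main argument, since $L(K_n) = nI - J$ has the all-ones eigenvector for $0$ and everything orthogonal to it as eigenvectors for $n$.

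\medskip
For $P_n$, the path is a series circuit: the unique $i$--$j$ walk passes through exactly $d(i,j)$ unit resistors in series, and there are no alternate routes (the path is a tree), so the series law gives $r_{P_n}(i,j) = d(i,j)$. Equivalently, in any tree the effective resistance between two vertices equals the number of edges on the unique path between them, a standard consequence of the fact that deleting a bridge disconnects the graph so no current can flow through any edge off the $i$--$j$ path.

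\medskip
\noindent\textbf{Main obstacle.} There is essentially no obstacle here — both are textbook results and the authors explicitly state them without proof (''we state without proof''), citing \cite{bapat2010graphs}. If a proof were expected, the only mild care needed is justifying the symmetry reduction for $K_n$ rigorously (or equivalently diagonalizing $L(K_n)$), and noting that the pseudoinverse formula $r_G(i,j) = (e_i-e_j)^T L^\dagger (e_i-e_j)$ from the preliminaries yields both results immediately once the spectral decomposition of $L$ is known. Given the paper's framing, I would simply reference the spectral/series computations and move on.
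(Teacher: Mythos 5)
The paper offers no proof of this proposition at all --- the authors explicitly ``state without proof'' these facts and defer to \cite{bapat2010graphs} --- so there is nothing to compare against; your argument is a correct and standard justification. The spectral computation for $K_n$ is the clean route: since $L(K_n)=nI-J$ and $e_i-e_j$ is orthogonal to the all-ones vector, $L^\dag(e_i-e_j)=\tfrac1n(e_i-e_j)$ and the pseudoinverse formula from the paper's preliminaries gives $r_{K_n}(i,j)=\tfrac1n\|e_i-e_j\|^2=\tfrac2n$ immediately; the tree/series argument for $P_n$ is likewise standard. One small caution: your abandoned parallel-reduction sketch contains an arithmetic slip (the direct edge in parallel with the $n-2$ length-two paths has total conductance $1+\tfrac{n-2}{2}=\tfrac n2$, so the expression should be $\bigl(1+\tfrac{n-2}{2}\bigr)^{-1}=\tfrac2n$, not $\bigl(1+\tfrac11+\tfrac{n-2}{2}\bigr)^{-1}$), and the general spectral formula you quote should not carry the $\tfrac2n$ prefactor; since you discard both in favor of the eigenvalue argument, neither affects your conclusion.
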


\begin{prop}\label{prop:somekemenys}
Let $K_n$, $P_n$, and $S_n$ be, respectively, the complete graph, path graph, and star graph on $n$ vertices $\{1, 2, \hdots, n\}$. Let 1 be the central vertex of $S_n$. Then 
\begin{align*}
    \kemeny(K_n) =& \frac{(n-1)^2}{n} & \kemeny(P_n) =& \frac{2n^2-4n+3}{6} & \kemeny(S_n) =&\:n-\frac32\\
    \mu(K_n, j) =& \frac{2(n-1)^2}{n} & \mu(P_n, j) =& (n-j)^2 + (j-1)^2 & \mu(S_n,1) =&\:n-1.
\end{align*}
\end{prop}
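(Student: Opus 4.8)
The plan is to reduce every assertion to Lemma~\ref{thm:kemeny} together with Proposition~\ref{prop:someresistances}, and from there to an elementary summation. Writing the Kemeny identity in the form $\kemeny(G)=\frac{1}{4m}\sum_{i}d_i\,\mu(G,i)$, it suffices to record, for each of $K_n$, $P_n$, $S_n$: the degree sequence and $m=|E|$; each moment $\mu(G,j)=\sum_i d_i r_G(i,j)$; and then the sum of the moments against the degrees. The data needed are $d_i\equiv n-1$, $m=\binom n2$, $r_{K_n}(i,j)=\tfrac2n$ for $K_n$; $d_1=d_n=1$ and $d_i=2$ otherwise, $m=n-1$, $r_{P_n}(i,j)=|i-j|$ for $P_n$; and for $S_n$ the centre has degree $n-1$, each leaf degree $1$, $m=n-1$, and since $S_n$ is a tree its resistance distances coincide with path lengths, so $r(1,\ell)=1$ and $r(\ell,\ell')=2$ for distinct leaves $\ell,\ell'$.

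First I would compute the moments. For $K_n$, summing over the $n-1$ vertices $i\neq j$ gives $\mu(K_n,j)=(n-1)\cdot\tfrac2n\cdot(n-1)=\frac{2(n-1)^2}{n}$. For $S_n$, $\mu(S_n,1)=(n-1)\cdot 1=n-1$ is immediate; I also record the leaf moment $\mu(S_n,\ell)=(n-1)\cdot 1+(n-2)\cdot 2=3n-5$ for use below. For $P_n$, the main step is
\[
\mu(P_n,j)=\sum_{i=1}^n d_i|i-j|=2\sum_{i=1}^n|i-j|-(j-1)-(n-j),
\]
where the correction reflects that the two endpoints have degree $1$ rather than $2$; evaluating $\sum_{i=1}^n|i-j|=\binom j2+\binom{n-j+1}{2}$ and simplifying yields $(j-1)^2+(n-j)^2$.

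Next I would assemble the Kemeny constants via $\kemeny(G)=\frac{1}{4m}\sum_i d_i\,\mu(G,i)$. For $K_n$ this is $\frac{1}{4\binom n2}\cdot n(n-1)\cdot\frac{2(n-1)^2}{n}=\frac{(n-1)^2}{n}$. For $S_n$ it is $\frac{1}{4(n-1)}\big[(n-1)\mu(S_n,1)+(n-1)\mu(S_n,\ell)\big]=\frac{(n-1)+(3n-5)}{4}=n-\tfrac32$. For $P_n$ it is
\[
\frac{1}{4(n-1)}\sum_{i=1}^n d_i\big[(i-1)^2+(n-i)^2\big]
=\frac{1}{4(n-1)}\Big(2\sum_{i=1}^n\big[(i-1)^2+(n-i)^2\big]-2(n-1)^2\Big),
\]
and using $\sum_{i=1}^n(i-1)^2=\sum_{i=1}^n(n-i)^2=\frac{(n-1)n(2n-1)}{6}$ this collapses to $\frac{2n^2-4n+3}{6}$.

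The only case requiring genuine attention is $P_n$, and there the single obstacle is the routine bookkeeping of the arithmetic and square-sum series together with the degree-$1$-versus-$2$ correction at the two ends; the $K_n$ and $S_n$ cases are one-line evaluations. As an aside, $\kemeny(S_n)$ can alternatively be obtained from Corollary~\ref{cor:kemenymanyG1v} by viewing $S_n$ as $n-1$ copies of $K_2$ joined at the centre, using $\kemeny(K_2)=\tfrac12$ and $\mu(K_2,v)=1$. A check at, say, $n=3$ confirms all six formulas.
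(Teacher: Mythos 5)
Your computations are correct: the paper states Proposition~\ref{prop:somekemenys} without proof (``we state without proof the resistance, Kemeny's constant, and moment of some graphs that are both easy to compute''), and your derivation via $\kemeny(G)=\frac{1}{4m}\sum_i d_i\,\mu(G,i)$ together with the resistance values of Proposition~\ref{prop:someresistances} is exactly the intended route; I verified the $K_n$, $S_n$, and $P_n$ cases, including the endpoint degree correction and the final simplification to $\frac{2n^2-4n+3}{6}$. Nothing is missing.
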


\subsection{Barbell Graphs}\label{sec:barbell}
    In \cite{breen2019computing} barbell graphs are studied because of their large Kemeny's constants by 
    using tools of spectral graph theory. They give the following definition for these barbell graphs.
    \begin{definition} (Definition 1.3 of \cite{breen2019computing})
    The graph $B(k,a,b,c)$ on $ka + b + c$ vertices is formed by taking $k$ copies of $P_a$ (path on $a$ vertices) and putting a clique at both ends to ``glue'' the paths together; we then connect all vertices of a $K_b$ to one set of neighbors in the graph with degree $k$ and a $K_c$ to the other set of neighbors in the graph with degree $k$.
    \end{definition}
    
    \begin{figure}[H]
    \centering
    \begin{tikzpicture}
        \tikzstyle{every node}=[circle, fill=black, inner sep=1pt]
           \node (1) at (1,0) {};
           \node (2) at (0.309,0.951) {};
           \node (3) at (-0.809,0.588) {};
           \node (4) at (-0.809,-0.588) {};
           \node (5) at (0.309,-0.951) {};
           
        \node (6) at (2, 0) {};
        \node (7) at (3,0) {};
        \node (8) at (4,0) {};
        \node (9) at (5,0) {};
        \node (10) at (6,0) {};

        \node (11) at (6.5, 0.866) {};
        \node (12) at (7.5, 0.866) {};
        \node (13) at (8,0) {};
        \node (14) at (7.5, -0.866) {};
        \node (15) at (6.5, -0.866) {};
       \draw{
          (1)--(2)--(3)--(4)--(5)--(1)
          (1)--(3)--(4)--(1)
          (2)--(4)
          (3)--(5)--(2)
          (1)--(6)--(7)--(8)--(9)--(10)
          (10)--(11)--(12)--(13)--(14)--(15)--(10)
          (10)--(12)--(14)--(10)--(13)--(15)--(11)--(13)
          (11)--(14)
          (12)--(15)
       };
    \end{tikzpicture}
    \caption{The graph $B(1,6,4,5)$}
    \label{fig:barbell}
\end{figure}
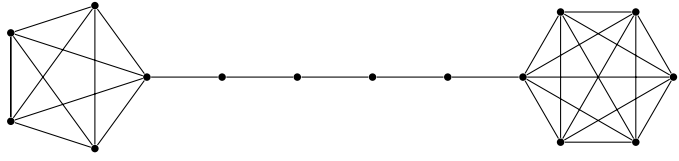

    We will concern ourselves only with barbells with $k=1$. Using Theorem \ref{thm:kemenymany1seps} makes it much easier to get an expression for Kemeny's constant of 1-connected barbell graphs, and the resulting expression is much simpler that that found in \cite{breen2019computing}.
    
    \hidden{
    \begin{lemma}
        Let $K_n$ be a complete graph on $n$ vertices with labels from $1$ to $n$, then
        \begin{equation*}
            r_{K_n}(i, j) = \begin{cases}
                \frac{2}{n} &\text{ if } i \neq j \\
                0 &\text{otherwise}
            \end{cases}
        \end{equation*}
    \end{lemma}
    }
    
    \begin{theorem}\label{thm:barbell}
        Kemeny's Constant of a Barbell graph $G = B(1, a, b, c)$ is given by
        \begin{align*}
        \kemeny(G) &= \frac{1}{m}\left[\binom{b+1}{2}\left(\frac{b^2}{b+1}+(a-1)^2+\frac{2c^2}{c+1}\right)\right.+ (a-1)\left(\frac{2a^2-4a+3}{6}+\frac{2b^2}{b+1}+\frac{2c^2}{c+1}\right)\\
        &\left.\quad\quad\quad+\binom{c+1}{2}\left(\frac{c^2}{c+1}+(a-1)^2+\frac{2b^2}{b+1}\right)+2\binom{b+1}{2}\binom{c+1}{2}(a-1)\right]
        \end{align*}
        \hidden{
        \begin{align*}
            \kemeny(G)
            =&\: \frac 1m \left[
                \frac{(a-1)^3}{3} + (a-1)^2 \left[
                    \binom{b+1}{2} + \binom{c+1}{2}
                \right]
            \right. \\
            &\hspace{2em} \left.
                +\: 2(a-1) \left[
                    \binom{b+1}{2} \binom{c+1}{2} + \frac{b^2}{b+1} + \frac{c^2}{c+1} + \frac{1}{12}
                \right]
            \right. \\
            &\hspace{2em} \left.
                +\: \binom{b+1}{2} \left(
                    \frac{b^2}{b+1} + \frac{2c^2}{c+1}
                \right)
                +\binom{c+1}{2} \left(
                    \frac{2b^2}{b+1} + \frac{c^2}{c+1}
                \right)
            \right]
        \end{align*}
        }
        where $m=\binom{b+1}{2}+\binom{c+1}{2} + a-1$.
    \end{theorem}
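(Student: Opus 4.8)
The plan is to realize the barbell $B(1,a,b,c)$ as an iterated $1$-sum of three easy pieces and then apply Theorem \ref{thm:kemenymany1seps} directly. Concretely, write $G = G_1 \oplus_{v_{1,2}} G_2 \oplus_{v_{2,3}} G_3$ where $G_1 = K_{b+1}$, $G_2 = P_a$, and $G_3 = K_{c+1}$. The identification points are: $v_{1,2}$ is the vertex of $K_{b+1}$ that gets glued to the first vertex of the path (equivalently, the ``neighbor of degree $k=1$'' on the $K_b$ side, so that $K_b$ together with that vertex forms $K_{b+1}$), and $v_{2,3}$ is the last vertex of $P_a$ glued to the corresponding vertex of $K_{c+1}$. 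One should first check that this $1$-sum decomposition genuinely reproduces the barbell as defined (the clique on $b+1$ vertices is $K_b$ plus the attachment vertex, likewise on the $c$ side, and the two cliques are joined by an internally disjoint path of $a$ vertices), and record the edge counts $m_1 = \binom{b+1}{2}$, $m_2 = a-1$, $m_3 = \binom{c+1}{2}$, so $m = m_1 + m_2 + m_3 = \binom{b+1}{2} + \binom{c+1}{2} + a - 1$.

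Next I would assemble the ingredients that feed into the $n=3$ case of Theorem \ref{thm:kemenymany1seps}. From Proposition \ref{prop:somekemenys} we have $\kemeny(K_{b+1}) = b^2/(b+1)$, $\kemeny(P_a) = (2a^2 - 4a + 3)/6$, $\kemeny(K_{c+1}) = c^2/(c+1)$, and for the moments (all needed at the relevant cut vertices): $\mu(K_{b+1}, v_{1,2}) = 2b^2/(b+1)$, $\mu(K_{c+1}, v_{2,3}) = 2c^2/(c+1)$, and for the path, $\mu(P_a, v_{1,2}) = \mu(P_a, v_{2,3}) = (a-1)^2$ since each cut vertex is an endpoint of $P_a$ (take $j=1$ or $j=a$ in the formula $\mu(P_a,j) = (a-j)^2 + (j-1)^2$). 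Finally, by Proposition \ref{prop:1sepres} the only resistance term appearing (the $i=1$, $j=3$ term with $j - i \ge 2$) is $r(v_{1,2}, v_{2,3})$, the effective resistance across the whole path $P_a$ inside $G$, which equals $r_{P_a}(v_{1,2}, v_{2,3}) = a - 1$ by Proposition \ref{prop:someresistances}.

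Then I would substitute everything into the displayed formula of Theorem \ref{thm:kemenymany1seps} with $n=3$. With $q_{i,j}$ unwound, the numerator is
\[
m_1\bigl(\kemeny(G_1) + \mu(G_2, v_{1,2}) + \mu(G_3, v_{2,3})\bigr) + m_2\bigl(\kemeny(G_2) + \mu(G_1, v_{1,2}) + \mu(G_3, v_{2,3})\bigr)
\]
\[
+\, m_3\bigl(\kemeny(G_3) + \mu(G_1, v_{1,2}) + \mu(G_2, v_{2,3})\bigr) + 2 m_1 m_3\, r(v_{1,2}, v_{2,3}),
\]
and plugging in the values above gives exactly the three bracketed blocks of the claimed expression: the $\binom{b+1}{2}(\cdots)$ block, the $(a-1)(\cdots)$ block, the $\binom{c+1}{2}(\cdots)$ block, plus the cross term $2\binom{b+1}{2}\binom{c+1}{2}(a-1)$; dividing by $m$ finishes it.

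The computation is entirely routine, so there is no real ``hard part'' — the only thing demanding care is the bookkeeping in the first step: making sure the $1$-sum model of $B(1,a,b,c)$ is the correct one (in particular that the attachment vertices are genuine endpoints of $P_a$, which is what makes the path moments collapse to $(a-1)^2$, and that the gluing absorbs one vertex from each clique so the cliques are $K_{b+1}$ and $K_{c+1}$ rather than $K_b$ and $K_c$), together with correctly specializing the index set $\{(i,j): 1 \le i < j \le 3,\ j - i \ge 2\} = \{(1,3)\}$ so that only a single resistance term survives. Everything else is substitution of the values from Propositions \ref{prop:someresistances} and \ref{prop:somekemenys}.
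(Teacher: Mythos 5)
Your proposal is correct and is exactly the paper's argument: the authors also realize $B(1,a,b,c)$ as the $1$-sum of $K_{b+1}$, $P_a$, and $K_{c+1}$, note $r_{P_a}(1,a)=a-1$, and invoke Theorem \ref{thm:kemenymany1seps} with $n=3$ together with Proposition \ref{prop:somekemenys}. Your write-up simply makes explicit the bookkeeping (edge counts, the unwinding of $q_{i,j}$, and the single surviving resistance term) that the paper leaves as ``an immediate consequence.''
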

    \begin{proof}
        The graph $B(1, a, b, c)$ is the one sum of $K_{b+1}, K_{c+1}$, and $P_a$. Notice
        that $r_{P_a}(1,a) = a-1.$ With this fact and with Proposition \ref{prop:somekemenys}
        the result is an immediate consequence of Theorem \ref{thm:kemenymany1seps} with $n=3$.
    \hidden{
        We first create a closed-form expression for the summation of the weighted resistances. With the 
        exception of the vertex that lies on the path of the barbell, the degree of all vertices in $K_{b+1}$ 
        and $K_{c+1}$ is $b$ and $c$, respectively. The sum of weighted resistances of $K_{b+1}$ is
        \begin{equation*}
            \sum_{i, j \neq b+1} b(b) \left(\frac{2}{b}\right) + 2 \sum_{j=1}^b b(b+1) \left(\frac{2}{b}\right)
                = 2 \binom{b}{2} b^2 \frac{2}{b+1} + 2b\cdot b(b+1)\frac{2}{b+1}.
        \end{equation*}
        The sum of weighted resistance in $K_{c+1}$ is similar to the above.
        
        Recall that each vertex on a minimally connected graph has effective resistance equal to the graph
        distance. The degree of each vertex along the path of the barbell is $2$, and the sum of their
        weighted resistances is
        \begin{equation*}
            2 \sum_{i=2}^{a-1} \sum_{j=i}^{a-1} d_i d_j(j-i) = 8 \sum_{i=2}^{a-1} \sum_{j=i}^{a-1} (j-i).
        \end{equation*}

    For pairs with one vertex in $K_{b+1}$ and one in $P_a\setminus{K_{b+1}}$ we have 
\[2b\sum_{i=1}^{a-2}2b\left(\frac{2}{b+1}+i\right) + 2b\left[b(c+1)\left(\frac{2}{b+1}+a-1\right)\right].\]

Similarly for pairs with one vertex in $K_{c+1}$ and one in $P_a\setminus{K_{c+1}}$ we have
\[2c\sum_{i=1}^{a-2}2c\left(\frac{2}{c+1}+i\right) + 2c\left[c(b+1)\left(\frac{2}{c+1}+a-1\right)\right].\]

For the pairs with one vertex being the vertex in $K_{b+1}\cap P_a$ and the other being in $P_a\setminus{K_{c+1}}$ we have
\[2\sum_{i=1}^{a-2}2(b+1)i.\]

For the pairs with one vertex being the vertex in $K_{c+1}\cap P_a$ and the other being in $P_a\setminus{K_{b+1}}$ we have
\[2\sum_{i=1}^{a-2}2(c+1)i.\]

The pair consisting of the vertex in $K_{b+1}\cap P_a$ and the vertex in $K_{c+1}\cap P_a$ is
\[2(b+1)(c+1)(a-1).\]

The rest of the pairs with one vertex in $K_{b+1}\setminus P_a$ and the other in $K_{c+1}\setminus P_a$ give
\[2bc\left[bc\left(\frac{2}{b+1}+a-1+\frac{2}{c+1}\right)\right].\]

Simplifying the sums and rearranging the terms gives the following.
\begin{align*}
    \kemeny(G)=&\frac{1}{4m}\left[c^2\left(\binom{c}{2}\cdot\frac{4}{c+1}+4+\frac{2(a-2)}{c+1} + \binom{a-1}{2}+ \frac{4(b+1)}{c+1} +2(b+1)(a-1)\right)\right.\\
    &+b^2\left(\binom{b}{2}\cdot\frac{4}{b+1}+4+\frac{2(a-2)}{b+1} + \binom{a-1}{2}+ \frac{4(c+1)}{b+1} +2(c+1)(a-1)\right)\\
    &+\frac{4}{3}(a^3-6a^2+11a-6)+4(c+1)\binom{a-1}{2}+4(b+1)\binom{a-1}{2}\\
    &\left.+2(c+1)(b+1)(a-1)+2c^2b^2\left(\frac{2}{c+1}+a-1+\frac{2}{b+1}\right)\right]
\end{align*}

Grouping the terms that have $\frac{1}{b+1}, \frac{1}{c+1},$ and $(a-1)$ yields the result.
}
    \end{proof}
    As shown in \cite{breen2019computing}, the barbell with maximum Kemeny constant (among all barbells on $n$ vertices) will occur when $a,b,c$ are all close to $\frac n3$. A more careful analysis of the expression in Theorem \ref{thm:barbell} shows that $B(1, \frac{n}{3}+2, \frac{n}{3}-1, \frac{n}{3}-1)$ will be the actual barbell with largest Kemeny constant. In the next two corollaries one can see that Kemeny's constant of these two barbells have the same order of magnitude and differ only in the lower terms.
    
    \hidden{
    \begin{corollary}
        Given $n \geq 9$, the barbell on $n$ vertices that maximizes Kemeny's constant is
        $B(1, n/3+2, n/3-1, n/3-1)$.
    \end{corollary}
    \begin{proof}
        Let $n \geq 9$ be given, and observe that $\kemeny(B(1, a, b, c)) = \kemeny(B(1, a, c, b))$. It
        is immediate that $D_b \kemeny = D_c \kemeny$, implying that $b=c$ for the optimizer. With the
        substitution $c=b$, our objective function reduces to
        \begin{align*}
            \kemeny(B(1, a, b, b)) = \frac 1m \left[
                \frac{(a-1)^3}{3} + b(a-1)^2 (b+1) + (a-1) \left[
                    \frac{b^2 (b+1)^2}{2} + \frac{4b^2}{b+1} + \frac 16
                \right] + 3b^3
            \right],
        \end{align*}
        where $m = 2 \binom{b+1}{2} + a-1 = a-1 + b(b+1)$. To simplify our proof, we denote the inner
        portion of the brackets above as $f(a, b)$, so that the objective function $\kemeny(G)$ is of
        the form $f(a, b)/m$. Our problem has the constraint $n = a + 2b$, so the Lagrangian is
        \begin{equation*}
            \mathcal L(a, b, \lambda) = \kemeny(B(1, a, b, b)) + \lambda(n - a - 2b).
        \end{equation*}
        If $\mathbf c = (a, b)$ is the vector of independent variables, the solution(s) to the problem
        occur at the zeros of
        \begin{equation*}
            \frac{d \mathcal L}{d \mathbf c}
            = \frac{d \kemeny}{d \mathbf c} - \lambda \begin{pmatrix}
                1 \\
                2
            \end{pmatrix}.
        \end{equation*}
        The simplified partial derivatives of the objective function are
        \begin{align*}
            \frac{\partial \kemeny}{\partial a}
            &= \frac 1m \frac{\partial f}{\partial a} - \frac{f(a, b)}{m^2} \frac{\partial m}{\partial a} \\
            &= \frac 1m \frac{\partial f}{\partial a} - \frac{\kemeny}{m} \frac{\partial m}{\partial a} \\
            &= \frac 1m \left[
                (a-1)^2 + 2b (a-1) (b+1) + \frac{b^2 (b+1)^2}{2} + \frac{4b^2}{b+1} + \frac{1}{6}
                - \kemeny
            \right] \\
            \frac{\partial \kemeny}{\partial b}
            &= \frac 1m \frac{\partial f}{\partial b} - \frac{f(a, b)}{m^2} \frac{\partial m}{\partial b} \\
            &= \frac 1m \frac{\partial f}{\partial b} - \frac{\kemeny}{m} \frac{\partial m}{\partial b} \\
            &= \frac 1m \left[
                (a-1)^2 (2b+1) + b (a-1)(b+1)(2b+1) + \frac{4b(a-1)(b+2)}{(b+1)^2} + 9b^2
                - (2b + 1) \kemeny
            \right],
        \end{align*}
        and taking $D_{\mathbf c} \mathcal L$ to be zero,
        \begin{equation*}
            \lambda
            = \frac{\partial \kemeny}{\partial a}
            = \frac 12 \frac{\partial \kemeny}{\partial b}.
        \end{equation*}
    \end{proof}
    }
    
    
    \begin{corollary}\label{cor:bigbarbell}
        Kemeny's Constant of a Barbell graph $B(1, \frac n3, \frac n3, \frac n3)$ is given by
        \begin{equation*}
            \kemeny\left(B\left(1, \frac n3, \frac n3, \frac n3\right)\right) = \frac{1}{54}\left[n^3+3n^2+24n-36+\frac{-513n^2+1782n-1701}{n^3+9n^2+9n-27}\right].
        \end{equation*}
    \end{corollary}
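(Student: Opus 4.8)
The plan is to specialize the closed form of Theorem~\ref{thm:barbell} to $a=b=c=n/3$ (which of course presumes $3\mid n$, and $n\ge 6$ so that $P_{n/3}$ carries an edge) and then simplify. First I would record the edge count: since $\binom{n/3+1}{2}=\frac{n(n+3)}{18}$ and $a-1=\frac{n-3}{3}$, Theorem~\ref{thm:barbell} gives $m=2\binom{n/3+1}{2}+(n/3-1)=\frac{n(n+3)}{9}+\frac{n-3}{3}=\frac{n^2+6n-9}{9}$. The key observation to make at the outset is that $9m(n+3)=(n^2+6n-9)(n+3)=n^3+9n^2+9n-27$, which is precisely the denominator appearing in the claimed identity; this predicts the shape of the answer and shows that the cubic denominator arises from clearing both the factor $m$ and an extra factor $n+3$ contributed by terms of the form $\frac{b^2}{b+1}$.

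Next I substitute $a=b=c=n/3$ into the bracketed expression of Theorem~\ref{thm:barbell}. Since $b=c$, its first and third summands coincide, and the building blocks become $\binom{b+1}{2}=\frac{n(n+3)}{18}$, $(a-1)^2=\frac{(n-3)^2}{9}$, $\frac{b^2}{b+1}=\frac{n^2}{3(n+3)}$, and $\frac{2a^2-4a+3}{6}=\frac{2n^2-12n+27}{54}$. A convenient bookkeeping point is that almost every product appearing in the bracket is in fact a polynomial in $n$: for instance $\binom{b+1}{2}\cdot\frac{b^2}{b+1}=\frac{n^3}{54}$ and $2\binom{b+1}{2}\binom{c+1}{2}(a-1)=\frac{n^2(n+3)^2(n-3)}{486}$, while the only genuinely rational contributions are the two equal terms $(a-1)\cdot\frac{2b^2}{b+1}=(a-1)\cdot\frac{2c^2}{c+1}=\frac{2n^2(n-3)}{9(n+3)}$. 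Hence the bracket equals $Q(n)+\frac{4n^2(n-3)}{9(n+3)}$ for an explicit polynomial $Q$ (with rational coefficients).

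Finally I divide by $m=\frac{n^2+6n-9}{9}$ and write the result over the common denominator $9m(n+3)=n^3+9n^2+9n-27$. This requires two polynomial divisions --- of $9Q(n)$ by $n^2+6n-9$, and of $\frac{4n^2(n-3)}{n^3+9n^2+9n-27}$, an improper fraction whose quotient contributes the constant $4$ to the polynomial part --- after which collecting terms produces the polynomial part $\frac{1}{54}(n^3+3n^2+24n-36)$ together with the proper fraction $\frac{1}{54}\cdot\frac{-513n^2+1782n-1701}{n^3+9n^2+9n-27}$, which is the claimed identity. The only real obstacle is bookkeeping: this is a routine but error-prone symbolic computation, best organized by separating out the lone $\frac{1}{n+3}$ term from the polynomial terms before combining. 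As a sanity check, at $n=6$ both Theorem~\ref{thm:barbell} and the claimed formula evaluate to $\kemeny(B(1,2,2,2))=\frac{323}{42}$.
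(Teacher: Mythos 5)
Your proposal is correct and follows exactly the route the paper intends: the corollary is stated without proof as a direct specialization of Theorem~\ref{thm:barbell} to $a=b=c=n/3$, and your substitution, the identification of $9m(n+3)=n^3+9n^2+9n-27$ as the source of the denominator, and the numerical check at $n=6$ (and one can also verify $n=9$ gives $21$ on both sides) all confirm the computation.
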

    
    \begin{corollary}\label{cor:biggestbarbell}
        Kemeny's Constant of a Barbell graph $B(1,\frac{n}{3}+2, \frac{n}{3}-1, \frac{n}{3}-1)$ is given by
        \begin{equation*}
            \kemeny\left(B\left(1,\frac{n}{3}+2, \frac{n}{3}-1, \frac{n}{3}-1\right)\right)=\:\frac{1}{54}\left[n^3+3n^2+60n-270+\frac{297n^2-729n+5832}{n^3+9n}\right].
        \end{equation*}
    \end{corollary}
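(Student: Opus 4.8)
The plan is to specialize the closed form of Theorem~\ref{thm:barbell} to the parameters $a = \frac n3 + 2$ and $b = c = \frac n3 - 1$ (so $3 \mid n$ is assumed) and then simplify. Because $b = c$, the first and third ``clique'' summands in the bracket of Theorem~\ref{thm:barbell} coincide and the middle summand merges its two $\frac{2b^2}{b+1}$, $\frac{2c^2}{c+1}$ terms, so the formula collapses to
\begin{equation*}
    \kemeny(G) = \frac 1m\left[2\binom{b+1}{2}\left(\frac{3b^2}{b+1} + (a-1)^2\right) + (a-1)\left(\frac{2a^2 - 4a + 3}{6} + \frac{4b^2}{b+1}\right) + 2\binom{b+1}{2}^2(a-1)\right],
\end{equation*}
with $m = 2\binom{b+1}{2} + a - 1 = b(b+1) + a - 1$.

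Next I substitute $b + 1 = \frac n3$, i.e.\ $b = \frac n3 - 1$ and $a - 1 = \frac n3 + 1$. Writing $t = \frac n3$ for brevity, one has $\binom{b+1}{2} = \binom t2 = \frac{t(t-1)}{2}$, and a short computation gives $m = (t-1)t + (t+1) = t^2 + 1$. Clearing the denominators $6$ and $t$ inside the bracket, $\kemeny(G)$ becomes $\tilde N(t)/\bigl(6t(t^2+1)\bigr)$, where $\tilde N$ is an explicit polynomial of degree $6$ in $t$ with leading term $3t^6$. Polynomial long division of $\tilde N(t)$ by the degree-$3$ denominator $6t(t^2+1) = 6t^3 + 6t$ produces a degree-$3$ quotient together with a proper rational remainder whose numerator has degree at most $2$.

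Finally I re-substitute $t = \frac n3$, using $6t(t^2+1) = \frac{2}{9}\,n(n^2 + 9) = \frac{2}{9}(n^3 + 9n)$, and rescale so that the polynomial part is written over $54$; collecting terms yields exactly
\begin{equation*}
    \kemeny\left(B\left(1, \tfrac n3 + 2, \tfrac n3 - 1, \tfrac n3 - 1\right)\right) = \frac{1}{54}\left[n^3 + 3n^2 + 60n - 270 + \frac{297n^2 - 729n + 5832}{n^3 + 9n}\right].
\end{equation*}
There is no conceptual obstacle, since Theorem~\ref{thm:barbell} does all the structural work; the only labor is the degree-$6$ polynomial bookkeeping and the long division, where one must take care to track the constant and linear terms correctly. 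The parallel computation with $a = b = c = \frac n3$ (where instead $m = 2\binom{n/3 + 1}{2} + \frac n3 - 1$) gives Corollary~\ref{cor:bigbarbell}, and comparing the polynomial parts of the two resulting expressions is what shows that $B(1, \frac n3 + 2, \frac n3 - 1, \frac n3 - 1)$ has the larger Kemeny constant.
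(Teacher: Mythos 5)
Your proposal is correct and matches the paper's (implicit) derivation exactly: the corollary is stated without proof precisely because it follows from Theorem \ref{thm:barbell} by the substitution $a=\tfrac n3+2$, $b=c=\tfrac n3-1$ and routine simplification, and your collapsed symmetric form of the bracket and the value $m=t^2+1$ with $t=\tfrac n3$ are both right. The bookkeeping does work out (taking care that $a=t+2$, so the path term is $\tfrac{2t^2+4t+3}{6}$); for instance at $n=9$ both Theorem \ref{thm:barbell} and the stated closed form give $\kemeny(B(1,5,2,2))=\tfrac{353}{15}$.
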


\subsection{Trees}
In this section we use Theorem \ref{thm:kem1sepformula} to show that the path graph has the largest Kemeny's constant among trees of order $n$. We first show that the path graph also has the largest moment among trees of order $n$. While this was shown by Proposition 5.2 of \cite{ciardo2020kemeny}, for the sake of completeness we give a different proof using results from this paper. We note that in trees the effective resistance between vertices $i,j$ is the graph distance between $i,j$ (see \cite{bapat2010graphs}). 

\begin{lemma}\label{lem:pathmaxmoment}
Let $T_n$ denote a tree on $n$ vertices and let $v \in V(T_n).$ Then $\mu(T_n,v) \leq \mu(P_n, 1)$.
\end{lemma}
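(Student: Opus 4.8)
The plan is to induct on $n$ and exploit the recursive nature of the moment under 1-sums, since removing a leaf from a tree exhibits a 1-separation. First I would set up the induction: the base cases $n=1,2$ are trivial. For the inductive step, let $T_n$ be a tree on $n$ vertices and $v \in V(T_n)$ a chosen vertex. I want to compare $\mu(T_n, v)$ with $\mu(P_n, 1)$, the moment of the path at an endpoint. Using Proposition \ref{prop:somekemenys} I note $\mu(P_n,1) = (n-1)^2$.

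Next I would pick a leaf $\ell$ of $T_n$ (one not equal to $v$, which exists whenever $n \geq 2$ and in fact I can choose $\ell$ to be a leaf at maximum distance from $v$, or more carefully, a leaf lying in a branch "away from" $v$). Write $T_n = T_{n-1} \oplus_w K_2$, where $w$ is the neighbor of $\ell$ and $T_{n-1} = T_n \setminus \{\ell\}$ is a tree on $n-1$ vertices containing $v$. Applying Lemma \ref{thm:momentmany1seps} in the two-piece case (or reproving it directly: $\mu(T_n,v) = \mu(T_{n-1},v) + \mu(K_2,w) + 2 m_{K_2}\, r(v,w)$, and $m_{K_2}=1$, $\mu(K_2,w)=1$), I get
\begin{equation*}
    \mu(T_n, v) = \mu(T_{n-1}, v) + 1 + 2\,d(v,w),
\end{equation*}
where $d(v,w)$ is the distance from $v$ to $w$ in $T_{n-1}$ (equivalently in $T_n$). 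By the inductive hypothesis $\mu(T_{n-1}, v) \leq \mu(P_{n-1}, 1) = (n-2)^2$. Since $w$ is a vertex of a tree on $n-1$ vertices and any two vertices of such a tree are at distance at most $n-2$, we have $d(v,w) \leq n-2$. Therefore
\begin{equation*}
    \mu(T_n, v) \leq (n-2)^2 + 1 + 2(n-2) = (n-1)^2 = \mu(P_n, 1),
\end{equation*}
completing the induction.

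The main obstacle I anticipate is the bookkeeping around the choice of leaf: I must be sure that I can always delete a leaf $\ell \neq v$ so that $v$ survives in $T_{n-1}$, and that the bound $d(v,w)\le n-2$ is tight enough — it is, since $T_{n-1}$ has $n-1$ vertices and diameter at most $n-2$. One subtlety is that to achieve equality (and hence confirm $P_n$ is the unique maximizer, if that is wanted) one needs $T_{n-1}$ to already be an extremal path and $w$ to be its far endpoint with $v$ the near endpoint; but for the inequality alone this is not needed. A cleaner alternative, avoiding leaf-choice issues, is to induct by removing a leaf at maximum distance from $v$: then $w$ lies on a longest $v$–leaf path, which makes the distance bound transparent and also streamlines the equality discussion. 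Either way the estimate $2d(v,w)+1 \le 2(n-2)+1$ combined with the inductive hypothesis closes the argument.
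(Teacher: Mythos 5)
Your proof is correct and follows essentially the same route as the paper's: induction using the two-piece moment formula $\mu(G_1\oplus_w G_2,v) = \mu(G_1,v) + \mu(G_2,w) + 2m_2\, r(v,w)$ from Lemma \ref{thm:momentmany1seps}, bounding the resistance by the diameter, and completing the square to get $(n-1)^2$. The only difference is that you always peel off a single leaf (taking the second piece to be $P_2$), whereas the paper splits $T_n$ into two arbitrary subtrees $T_k \oplus_w T_l$ with $l\le k$ and applies the inductive hypothesis to both; yours is a clean special case of the same argument.
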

\begin{proof}
This is trivial to check for $n=1,2,3$.

Suppose for some $k \geq3$ that $\mu(T_k, v) \leq \mu(P_k,1).$ Let $l \leq k$, $n = k+l-1$, and $T_n = T_k \oplus_w T_l$. Then without loss of generality, by Theorem \ref{thm:momentmany1seps} we have
\begin{align*}
    \mu(T_n,v) =&\:\mu(T_k,v) + \mu(T_l, w) + 2(l-1)r(w,v)\\
    \leq&\:\mu(P_k,1) + \mu(P_l,1) + 2(l-1)(k-1)\\
    =&\:\mu(P_n,1).
\end{align*}
Thus for all $n$, $\mu(T_n, v) \leq \mu(P_n, 1).$
\end{proof}

    \begin{theorem}\label{thm:pathmax}
    Among all trees on $n$ vertices, the path $P_n$ maximizes Kemeny's constant.
    \end{theorem}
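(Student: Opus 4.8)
My plan is to induct on $n$, at each step peeling off a single pendant vertex and comparing the 1-separation formula (Theorem \ref{thm:kem1sepformula}) for an arbitrary tree with the corresponding formula for the path. The base cases $n = 1, 2, 3$ are immediate, since $P_n$ is the only tree on $n$ vertices when $n \le 3$.

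For the inductive step I would fix $n \ge 3$, take an arbitrary tree $T_n$, choose a leaf $u$ with unique neighbour $w$, and write $T_n = T_{n-1} \oplus_w P_2$ where $T_{n-1} = T_n \setminus \{u\}$. Since a tree on $n$ vertices has $n-1$ edges, the two sides carry $m_1 = n-2$ and $m_2 = 1$ edges. Theorem \ref{thm:kem1sepformula} then gives
\begin{equation*}
\kemeny(T_n) = \frac{(n-2)\bigl(\kemeny(T_{n-1}) + \mu(P_2, w)\bigr) + \kemeny(P_2) + \mu(T_{n-1}, w)}{n-1},
\end{equation*}
and applying the same decomposition to the path, $P_n = P_{n-1}\oplus P_2$ with the pendant attached at an \emph{endpoint} of $P_{n-1}$ (vertex $1$ in the labelling of Proposition \ref{prop:somekemenys}), gives
\begin{equation*}
\kemeny(P_n) = \frac{(n-2)\bigl(\kemeny(P_{n-1}) + \mu(P_2, 1)\bigr) + \kemeny(P_2) + \mu(P_{n-1}, 1)}{n-1}.
\end{equation*}
The $\mu(P_2,\cdot)$ and $\kemeny(P_2)$ terms are identical in the two expressions, so subtracting leaves
\begin{equation*}
\kemeny(P_n) - \kemeny(T_n) = \frac{(n-2)\bigl(\kemeny(P_{n-1}) - \kemeny(T_{n-1})\bigr) + \bigl(\mu(P_{n-1}, 1) - \mu(T_{n-1}, w)\bigr)}{n-1}.
\end{equation*}
The first bracket is $\ge 0$ by the inductive hypothesis and the second is $\ge 0$ by Lemma \ref{lem:pathmaxmoment} applied to $T_{n-1}$ at the vertex $w$; since $n-1 > 0$, this yields $\kemeny(T_n) \le \kemeny(P_n)$ and closes the induction.

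The only point requiring care is the choice of decomposition for $P_n$: attaching the pendant at an endpoint of $P_{n-1}$ makes the path's moment contribution equal to $\mu(P_{n-1}, 1)$, which by Lemma \ref{lem:pathmaxmoment} dominates $\mu(T_{n-1}, w)$ for every tree $T_{n-1}$ and every vertex $w$. Beyond that the argument is pure bookkeeping; since all of the substantive work is already carried out in Lemma \ref{lem:pathmaxmoment}, I do not anticipate any genuine obstacle, only the verification that the small base cases line up with where the induction can start.
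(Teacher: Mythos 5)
Your proof is correct and follows essentially the same route as the paper's: an induction that combines Theorem \ref{thm:kem1sepformula} with Lemma \ref{lem:pathmaxmoment}, the only difference being that you always peel off a single pendant edge ($T_n = T_{n-1}\oplus_w P_2$) whereas the paper splits off a subtree $T_l$ of arbitrary size $l\le k$. Your leaf-peeling version is, if anything, slightly cleaner to justify, since every tree on $n\ge 2$ vertices admits such a decomposition and only ordinary (rather than strong) induction is needed.
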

    \begin{proof}
    Let $T_n$ be a tree of order $n$. The result is trivial for $n=1, 2, 3$. 
    
    
    We proceed with the inductive step. Suppose for some $k \geq 3$ the path graph $P_k$ has maximal Kemeny's constant among trees of order $k$. Let $n = k + l - 1$ for some $l \leq k$. Consider the tree $T_n$ obtained by 1-summing $T_k$ and $T_l$ together at some vertex $v$. Then by Theorem \ref{thm:kem1sepformula} and Lemma \ref{lem:pathmaxmoment} we have
    \begin{align*}
        \kemeny(T_n) =&\: \frac{(k-1)[\kemeny(T_k)+\mu(T_l, v)] + (l-1)[\kemeny(T_l) + \mu(T_k, v)]}{k+l-2}\\
        \leq&\: \frac{(k-1)[\kemeny(P_k)+\mu(P_l, 1)] + (l-1)[\kemeny(P_l) + \mu(P_k, 1)]}{k+l-2}\\
        =&\:\kemeny(P_{k+l-1})\\
        =&\:\kemeny(P_n).
    \end{align*}
    
    Thus among all trees of order $n$, the path graph has maximal Kemeny's constant. 
    \end{proof}
    
\section{Braess Edges}
In this section we will consider the notion of Braess edges introduced in \cite{kirkland2016kemeny}.
We expand the idea of a Braess edge to a Braess set.
We first provide an expression for the difference in Kemeny's constant after edges are added in one component of a graph with a 1-separation. Then, we provide a sufficient condition for such edges to be Braess. Finally, we consider a particular family of graphs as an example.

\begin{definition}[Braess Edges]
\:
\begin{itemize}
    \item A \emph{Braess edge} is a non-edge $e$ of $G$ such that when $e$ is added to $G$, Kemeny's constant increases.
    \item A \emph{Braess set} is a set of non-edges of $G$ such that when the set is added to $G$, Kemeny's constant increases. 
\end{itemize}
\end{definition}


Let $G = G_1\oplus_v G_2$ and $m_1$ and
    $m_2$ denote the number of edges in $G_1$ and $G_2$, respectively. We let $\mathscr{L}$ denote a set of pairs $\{u,v\}$ of $G_2$ that are not edges in $G$ and let $\widehat G = (V, \widehat E)$ denote the
    graph given by adding $\mathscr L$ to $G$ and note $|\widehat E| = m+l$. Note that the choice of $G_2$ is arbitrary, and the following
    result applies to a set of edges $\mathscr L$ in either part of the separation.
    
    \begin{theorem}\label{thm:kemdiff}
    Let $G$ and $\widehat G$ be as defined above. Then we have the following.
        \begin{align*}
            \kemeny(\widehat G) - \kemeny(G) &=\frac{lm_1(\mu(G_1, v) - \kemeny(G_1))}{m(m+l)} + \frac{Am_1^2 + ((A+C)m_2 + Bl)m_1 + C(m_2^2 + lm_2)}{m(m+l)}
        \end{align*}
    Where $A = \mu(\widehat{G}_2,v) - \mu(G_2,v)$, $B = \kemeny(\widehat{G}_2) - \mu(G_2, v)$, and $C = \kemeny(\widehat{G}_2) - \kemeny(G_2).$    
    \end{theorem}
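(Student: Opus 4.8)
The plan is to invoke the 1-separation formula (Theorem \ref{thm:kem1sepformula}) twice — once for $G$ and once for $\widehat G$ — and subtract. The key structural observation is that since every pair in $\mathscr{L}$ lies inside $G_2$, adding $\mathscr{L}$ leaves the $G_1$ side of the separation completely untouched: we may write $\widehat G = G_1 \oplus_v \widehat G_2$, where $\widehat G_2$ is $G_2$ with the non-edges of $\mathscr{L}$ inserted, $|E(\widehat G_2)| = m_2 + l$, and $|E(\widehat G)| = m_1 + (m_2 + l) = m + l$. In particular $\kemeny(G_1)$, $\mu(G_1,v)$, and $m_1$ are unchanged between the two applications, so only the $G_2$-quantities move.

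First I would record, using $m = m_1 + m_2$,
\[
\kemeny(G) = \frac{m_1\big(\kemeny(G_1) + \mu(G_2,v)\big) + m_2\big(\kemeny(G_2) + \mu(G_1,v)\big)}{m},
\]
\[
\kemeny(\widehat G) = \frac{m_1\big(\kemeny(G_1) + \mu(\widehat G_2,v)\big) + (m_2+l)\big(\kemeny(\widehat G_2) + \mu(G_1,v)\big)}{m+l}.
\]
Then I would form the difference over the common denominator $m(m+l)$. The $\kemeny(G_1)$ terms contribute $\big(m - (m+l)\big)m_1\kemeny(G_1) = -lm_1\kemeny(G_1)$, and the $\mu(G_1,v)$ terms contribute $\big(m(m_2+l) - (m+l)m_2\big)\mu(G_1,v) = lm_1\mu(G_1,v)$; together these yield exactly the first summand $\dfrac{lm_1(\mu(G_1,v) - \kemeny(G_1))}{m(m+l)}$.

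For the remaining piece — the terms carrying $\mu(G_2,v)$, $\mu(\widehat G_2,v)$, $\kemeny(G_2)$, $\kemeny(\widehat G_2)$ — I would substitute $\mu(\widehat G_2,v) = \mu(G_2,v) + A$ and $\kemeny(\widehat G_2) = \kemeny(G_2) + C$, use $B = \kemeny(\widehat G_2) - \mu(G_2,v)$, and then collect by powers of $m_1$ after replacing $m$ by $m_1 + m_2$ throughout. This is where the bulk of the algebra sits, and it is the step most prone to a sign or bookkeeping slip; the payoff is that the numerator collapses precisely to $Am_1^2 + ((A+C)m_2 + Bl)m_1 + C(m_2^2 + lm_2)$. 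There is no conceptual obstacle here — the only care needed is to keep the $G_2$ and $\widehat G_2$ quantities distinct and to expand $m$ consistently so the quadratic-in-$m_1$ shape emerges cleanly; verifying the identity first in the case $l = 1$ of a single added edge is a convenient sanity check before carrying out the general manipulation.
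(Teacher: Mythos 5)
Your proposal is correct and follows essentially the same route as the paper: apply Theorem \ref{thm:kem1sepformula} to both $G$ and $\widehat G = G_1\oplus_v\widehat G_2$, subtract over the common denominator $m(m+l)$, peel off the $lm_1(\mu(G_1,v)-\kemeny(G_1))$ term exactly as you describe, and reorganize the remaining $G_2$-terms into the quadratic in $m_1$ using $m=m_1+m_2$. The only cosmetic difference is that the paper groups the leftover numerator directly as $mm_1A + lm_1B + (m+l)m_2C$ rather than substituting $\mu(\widehat G_2,v)=\mu(G_2,v)+A$ and $\kemeny(\widehat G_2)=\kemeny(G_2)+C$ first, but this is the same computation.
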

    \begin{proof}
    Theorem \ref{thm:kem1sepformula} gives $\kemeny(G)$ and $\kemeny(\widehat G)$ so all we must do is subtract one from the other. For convenience let $\mu(G) = \mu(G,v)$.
    \begin{align*}
    \kemeny(\widehat G) - \kemeny(G) =&\: \frac{m_1[\kemeny(G_1) + \mu(\widehat{G}_2)] + (m_2+l)[\kemeny(\widehat{G}_2)+\mu(G_1)]}{m+l}\\
    &- \frac{m_1\left[\kemeny(G_1) + \mu(G_2)\right] + m_2\left[\kemeny(G_2) + \mu(G_1)\right]}{m}\\
    =&\:\frac{m\left[m_1[\kemeny(G_1) + \mu(\widehat{G}_2)] + (m_2+l)[\kemeny(\widehat{G}_2+\mu(G_1)]\right]}{m(m+l)}\\
    &-\frac{(m+l)\left[m_1\left[\kemeny(G_1) + \mu(G_2)\right] + m_2\left[\kemeny(G_2) + \mu(G_1)\right]\right]}{m(m+l)}\\
    =&\:\frac{lm_1(\mu(G_1)-\kemeny(G_1))}{m(m+l)}\\
    &+\frac{m(m_2+l)\kemeny(\widehat{G}_2) - (m+l)m_2\kemeny(G_2) + mm_1\mu(\widehat{G}_2) - (m+l)m_1\mu(G_2)}{m(m+l)}\\
    =&\:\frac{lm_1(\mu(G_1)-\kemeny(G_1))}{m(m+l)}\\
    &+\frac{mm_1(\mu(\widehat{G}_2)-\mu(G_2)) + lm_1(\kemeny(\widehat{G}_2)-\mu(G_2))}{m(m+l)}\\
    &+ \frac{mm_2(\kemeny(\widehat{G}_2)-\kemeny(G_2)) + lm_2(\kemeny(\widehat{G}_2) - \kemeny(G_2))}{m(m+l)}\\
    =&\:\frac{lm_1(\mu(G_1, v) - \kemeny(G_1))}{m(m+l)} + \frac{Am_1^2 + ((A+C)m_2 + Bl)m_1 + C(m_2^2 + lm_2)}{m(m+l)}.
    \end{align*}
    Thus we have arrived at the result, taking $A = \mu(\widehat{G}_2,v) - \mu(G_2,v)$, $B = \kemeny(\widehat{G}_2) - \mu(G_2, v)$, and $C = \kemeny(\widehat{G}_2) - \kemeny(G_2).$ 
    \end{proof}
    \begin{corollary}\label{cor:braessset}
    If $Am_1^2 + ((A+C)m_2 + Bl)m_1 + C(m_2^2 + lm_2) > 0$ then $\mathscr{L}$, the set of edges added, is a Braess set.
    \end{corollary}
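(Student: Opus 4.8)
The plan is to read this off directly from Theorem~\ref{thm:kemdiff}. That theorem expresses $\kemeny(\widehat G)-\kemeny(G)$ as a sum of two fractions over the common positive denominator $m(m+l)$: the first numerator is $lm_1\bigl(\mu(G_1,v)-\kemeny(G_1)\bigr)$, and the second numerator is precisely $Am_1^2+((A+C)m_2+Bl)m_1+C(m_2^2+lm_2)$, the quantity appearing in the hypothesis. So it suffices to show the first numerator is nonnegative; granting that, the hypothesis makes the second numerator strictly positive, hence $\kemeny(\widehat G)-\kemeny(G)>0$, and by definition $\mathscr L$ is a Braess set.

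The only point requiring justification is the inequality $\mu(G_1,v)\ge\kemeny(G_1)$ (the factors $l$ and $m_1$ are nonnegative, and the term vanishes trivially when $m_1=0$). This is exactly the fact used in \cite{ciardo2020braess}, and it admits a short self-contained proof from the triangle inequality for effective resistance: since $r_{G_1}(i,j)\le r_{G_1}(i,v)+r_{G_1}(v,j)$ for all $i,j$, Lemma~\ref{thm:kemeny} gives
\begin{align*}
\kemeny(G_1)&=\frac{1}{4m_1}\sum_{i,j}d_id_j\,r_{G_1}(i,j)\le\frac{1}{4m_1}\sum_{i,j}d_id_j\bigl(r_{G_1}(i,v)+r_{G_1}(v,j)\bigr)\\
&=\frac{1}{4m_1}\Bigl(2m_1\sum_i d_i\,r_{G_1}(i,v)+2m_1\sum_j d_j\,r_{G_1}(v,j)\Bigr)=\mu(G_1,v),
\end{align*}
using $\sum_i d_i=2m_1$ in $G_1$. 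Hence the first term of the difference in Theorem~\ref{thm:kemdiff} is nonnegative.

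There is essentially no obstacle: the substantive computation was already performed in Theorem~\ref{thm:kemdiff}, and the remaining input is just the resistance triangle inequality (equivalently, the monotonicity fact cited from \cite{ciardo2020braess}). The corollary then follows by assembling these two observations.
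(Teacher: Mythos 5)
Your proof is correct and follows essentially the same route as the paper: both read the conclusion off the two-term decomposition in Theorem~\ref{thm:kemdiff} and observe that the first term $lm_1(\mu(G_1,v)-\kemeny(G_1))/(m(m+l))$ is nonnegative, so positivity of the second numerator forces $\kemeny(\widehat G)>\kemeny(G)$. The only difference is that the paper simply cites Proposition~3.2 of \cite{ciardo2020braess} for $\mu(G_1,v)\ge\kemeny(G_1)$, whereas you supply a short self-contained derivation of that inequality from the triangle inequality for effective resistance together with $\sum_i d_i = 2m_1$; your derivation is valid.
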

    \begin{proof}
    It was shown in Proposition 3.2 of \cite{ciardo2020braess} that the first term in Theorem \ref{thm:kemdiff} is non negative, so if the second term is positive then $\mathscr{L}$ is a Braess set of edges. 
    \end{proof}

    Notably, the Braess set is not closed under union nor intersection; for example, the path on $7$ vertices
    with the natural labelling $\{1, 2, \ldots, 7\}$ admits the Braess set $\{(1, 3), (5, 7)\}$, but
    neither of the singleton sets $\{(1, 3)\}$ nor $\{(5, 7)\}$ are Braess themselves. Braess sets are an
    isolated occurrence, it seems.
    
    It is also worth noting that if $A = \mu(\widehat{G}_2) - \mu(G_2) > 0$ then for a sufficiently large $m_1$, the expression in Corollary \ref{cor:braessset} will be positive. That is, if the set of edges added to $G_2$ cause an increase in the moment of $v\in G_2$, then that is a Braess set provided $G_1$ has enough edges. Thus it appears that in graphs with a 1-separation, the moment of the individual graph components plays a more significant role in the existance of Braess sets than Kemeny's constant of the individual components.
    
    We began studying Kemeny's constant in 1-connected graphs because intuition lead to believe that the presence of 1-separations caused Braess edges. This next result seems to support that intuition. 
    \begin{corollary}\label{cor:allbraess}
        Set $G_1 = K_n\oplus P_n$. Then for any graph $G_2$ and any non-edge $e$ of $G_2$, $e$ is Braess in $G = G_1\oplus G_2$ for sufficiently large $n$. In particular, for any $G_2$, there is a $G_1$ such that any pair in $G_2$ is Braess.
    \end{corollary}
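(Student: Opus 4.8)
The plan is to apply Corollary \ref{cor:braessset} with the roles of $G_1$ and $G_2$ as stated, so that the relevant quantity is the sign of $Am_1^2 + ((A+C)m_2 + Bl)m_1 + C(m_2^2 + lm_2)$, where $A = \mu(\widehat{G}_2,v) - \mu(G_2,v)$, $B = \kemeny(\widehat{G}_2) - \mu(G_2,v)$, $C = \kemeny(\widehat{G}_2) - \kemeny(G_2)$, and $m_1 = |E(G_1)|$. Here $\widehat{G}_2 = G_2 + e$ for the single non-edge $e$, so $l = 1$ and $A$, $B$, $C$ depend only on $G_2$ and $e$, not on $n$. The observation already noted after Corollary \ref{cor:braessset} is that the quadratic in $m_1$ has positive leading coefficient precisely when $A > 0$; so the main point is to show that adding an edge to $G_2$ strictly increases the moment $\mu(G_2, v)$, i.e. $A > 0$. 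Granting that, choosing $n$ large enough makes $m_1 = \binom{n}{2} + (n-1)$ dominate and forces the quadratic positive, hence $e$ is Braess in $G = G_1 \oplus_v G_2$.

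So the key step is: \emph{adding any edge to a connected graph strictly increases the moment at every fixed vertex $v$}. I would argue this from the monotonicity of effective resistance under edge addition (Rayleigh's monotonicity law): for every pair $i,j$, $r_{\widehat{G}_2}(i,j) \le r_{G_2}(i,j)$, with strict inequality for at least the endpoints of $e$ — wait, that gives $A \le 0$, the wrong direction. The correct accounting must track that degrees also change: the two endpoints of $e$ gain one in degree, and $\mu(\widehat{G}_2,v) = \sum_i \widehat{d}_i\, r_{\widehat{G}_2}(i,v)$. So I would split $A = \sum_i (\widehat{d}_i - d_i) r_{\widehat{G}_2}(i,v) + \sum_i d_i (r_{\widehat{G}_2}(i,v) - r_{G_2}(i,v))$; the first sum equals $r_{\widehat{G}_2}(u_1,v) + r_{\widehat{G}_2}(u_2,v)$ where $u_1,u_2$ are the endpoints of $e$, and the second sum is $\le 0$. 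The claim that $A > 0$ is therefore \emph{not} automatic and is the real obstacle — indeed it can fail (e.g. adding an edge incident to $v$ itself makes some resistances-to-$v$ drop to near zero).

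Given that subtlety, I expect the honest route is to not claim $A > 0$ in general but instead to exploit the specific structure: it suffices that $e$ is a non-edge of $G_2$, and the statement only asserts existence of \emph{some} $G_1$ — so I would instead bound the quadratic directly. Note $B = \kemeny(\widehat{G}_2) - \mu(G_2,v)$ and $C \le 0$ by Rayleigh monotonicity applied to Lemma \ref{thm:kemeny} together with the degree bookkeeping as above (again this needs care, but $\kemeny$ is known to typically decrease). The cleaner path: regardless of signs, write the quadratic as $Q(m_1) = A m_1^2 + \beta m_1 + \gamma$ with $\beta = (A+C)m_2 + Bl$ and $\gamma = C(m_2^2 + lm_2)$ fixed; if $A>0$ we are done by taking $n$ large; if $A = 0$ we need $\beta > 0$ for large $m_1$; if $A<0$ the quadratic is eventually negative and the approach fails. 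So the proof genuinely reduces to establishing $A \ge 0$ with the right hypotheses — and I suspect the intended reading is that $e$ is a non-edge \emph{not incident to $v$}, or that one first moves to a subgraph where this holds.

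The main obstacle, then, is precisely pinning down when $A = \mu(\widehat G_2, v) - \mu(G_2,v) > 0$; I would handle it by the degree-vs-resistance decomposition above, showing $A = r_{\widehat G_2}(u_1,v) + r_{\widehat G_2}(u_2,v) - \sum_{i} d_i\bigl(r_{G_2}(i,v) - r_{\widehat G_2}(i,v)\bigr)$ and arguing that for $e$ not incident to $v$ the positive terms dominate (using that resistances to $v$ of the far endpoints are bounded below while the total resistance decrease is controlled), or, failing a clean general argument, invoking the result of \cite{ciardo2020braess} that twin-pendant-type additions increase the moment. Once $A>0$ is in hand, the conclusion ``$e$ is Braess in $G_1 \oplus_v G_2$ for sufficiently large $n$'' is immediate from Corollary \ref{cor:braessset}, and the final ``in particular'' sentence follows because finitely many non-edges $e$ of a fixed $G_2$ each require only finitely large $n$, so one $n$ works for all of them simultaneously.
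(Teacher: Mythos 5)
Your proposal has a genuine gap, and you have in fact located it yourself: you reduce everything to the sign of the quadratic $Am_1^2+((A+C)m_2+Bl)m_1+C(m_2^2+lm_2)$ from Corollary \ref{cor:braessset} and then observe that if $A<0$ this quadratic is eventually negative, so ``the approach fails.'' It does fail, and the fixes you sketch (restricting to non-edges not incident to $v$, or hoping $A\ge 0$ under extra hypotheses) do not recover the statement, which asserts the conclusion for \emph{any} non-edge of \emph{any} $G_2$. The missing idea is that Corollary \ref{cor:braessset} throws away the first term of Theorem \ref{thm:kemdiff}, namely $\frac{lm_1(\mu(G_1,v)-\kemeny(G_1))}{m(m+l)}$, and that term is exactly where the hypothesis $G_1=K_n\oplus P_n$ enters. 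For the lollipop with $v$ the pendant end of the path one computes $\mu(G_1,v)-\kemeny(G_1)=\frac{3n^4-2n^2-8n+6}{3(n+2)}=\Theta(n^3)$ while $m_1=\binom{n}{2}+n-1=\Theta(n^2)$, so the numerator contribution $lm_1\bigl(\mu(G_1,v)-\kemeny(G_1)\bigr)$ is $\Theta(n^5)$ and positive, whereas $|A|m_1^2$ and all the remaining $A,B,C$ terms are $O(n^4)$ (since $A,B,C,l,m_2$ are constants determined by $G_2$ and $e$). Hence the full difference $\kemeny(\widehat G)-\kemeny(G)$ is positive for $n$ large \emph{regardless} of the sign of $A$. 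Your framing makes $A>0$ look like the crux; in the paper's argument $A$ plays no role beyond being bounded.

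Note also that this explains why the specific $G_1$ matters and why ``sufficiently many edges in $G_1$'' (your phrasing via $m_1\to\infty$) is not the right quantifier: for a generic $G_1$ one only knows $\mu(G_1,v)-\kemeny(G_1)\ge 0$, so the first term could be $O(m_1)$ and would then lose to $Am_1^2$ with $A<0$. The lollipop is chosen precisely so that $\mu(G_1,v)-\kemeny(G_1)$ grows faster than $m_1$ itself. Your closing remark that finitely many non-edges of a fixed $G_2$ can be handled by one common $n$ is fine, but the core of the proof as you propose it does not go through.
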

    \begin{proof}
    
    Let $G_1 = K_n\oplus P_n$ be the 1-sum of a complete graph to the end of a path graph. Using Lemma \ref{thm:momentmany1seps}, Theorem \ref{thm:kem1sepformula}, and Proposition \ref{prop:somekemenys} we can show that
    \[\kemeny(G_1) = \frac{3n^4-n^3+5n^2-18n+12}{3n(n+2)}\]
    \[\mu(G_1,v) = (n-1)^2\left(n+1+\frac{2}{n}\right)\]
    \[\mu(G_1,v) - \kemeny(G_1) = \frac{3n^4-2n^2-8n+6}{3(n+2)}\]
    where $v$ is the degree 1 vertex of $G_1$. Notice that $|E(G_1)| = \binom{n}{2}+n-1$.
    
    Now fix $G_2$ and choose an edge to add. Note that in terms of Corollary \ref{cor:braessset}, $A, B, C, l$ are all constants. Now plugging all this information into Theorem \ref{thm:kemdiff} and looking only at the numerator we get the following.
    \begin{align*}
        m(m+1)\left(\kemeny(\hat G) - \kemeny(G)\right) =&\:\frac{(n+2)(n-1)}{2}\left(\frac{3n^4-2n^2-8n+6}{3(n+2)}\right)+\frac{A(n+2)^2(n-1)^2}{4}\\
        &+\left((A+C)m_2+B\right)\frac{(n+2)(n-1)}{2}+C(m_2^2+m_2)
    \end{align*}
    Notice the first term is $O(n^5)$ and positive, while the other terms are in $O(n^4)$. Therefore, even if all of $A,B,C < 0$, we would still see an increase in Kemeny's constant, provided $n$ is large enough.
    
    Thus for any $G_2$ and any non-edge $e$ in $G_2$, there exists a $G_1$ such that if $G=G_1\oplus G_2$, $e$ is a Braess edge.
    \end{proof}
    We do not claim that $G_1$ as described in the above proof is the only graph to have this property, but it was used to show that there always exists a graph that has this property. Depending on choice of $G_2$ and the edge added one could potentially have $G_1$ be any graph at all, as was shown in \cite{ciardo2020braess} where $G_2 = P_3$ and $P_3$ was vertex summed to $G_1$ at the center node of $P_3$.
    

\subsection{A graph with $k$ pendant vertices attached}
Let $G, \widehat G$ be as above. This section looks at the case where $G_2$ is a star graph $S_{k+1}$ and $v$ is the central vertex. This can also be viewed as attaching $k$ pendant vertices at a point $v\in G_1$. The main result in this section is that any set of edges added to $G_2$ in this set up will be a Braess set of edges so long as $G_1$ is sufficiently large. This generalizes work of \cite{ciardo2020braess}. 
We first find Kemeny's constant of a graph with $k$ pendant vertices attached. 

\begin{theorem}\label{thm:kemenykpendants}
Let $G = G_1\oplus_vS_{k+1}$ where $v$ is the central vertex of $S_{k+1}.$ Then
\[\kemeny(G) = \frac{m_1\kemeny(G_1) + k\mu(G_1,v) + k(m_1+k-\frac12)}{m_1+k}.\]
\end{theorem}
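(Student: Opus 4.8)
The plan is to apply the 1-separation formula of Theorem \ref{thm:kem1sepformula} directly, taking $G_2 = S_{k+1}$ with $v$ identified as the central vertex. First I would record the elementary data for the star: since $S_{k+1}$ has $k+1$ vertices and $k$ edges, we have $m_2 = |E(S_{k+1})| = k$, and Proposition \ref{prop:somekemenys} gives $\kemeny(S_{k+1}) = (k+1) - \tfrac32 = k - \tfrac12$ together with $\mu(S_{k+1}, v) = (k+1) - 1 = k$, where $v$ is the center of $S_{k+1}$.

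Next I would substitute these values into Theorem \ref{thm:kem1sepformula}, which in this setting reads
\[
\kemeny(G) = \frac{m_1\bigl(\kemeny(G_1) + \mu(S_{k+1}, v)\bigr) + m_2\bigl(\kemeny(S_{k+1}) + \mu(G_1, v)\bigr)}{m_1 + m_2}.
\]
Plugging in $m_2 = k$, $\mu(S_{k+1}, v) = k$, and $\kemeny(S_{k+1}) = k - \tfrac12$ turns the numerator into $m_1\kemeny(G_1) + m_1 k + k\,\mu(G_1, v) + k\bigl(k - \tfrac12\bigr)$, and grouping the two terms $m_1 k$ and $k\bigl(k - \tfrac12\bigr)$ as $k\bigl(m_1 + k - \tfrac12\bigr)$ produces exactly the stated expression, with denominator $m_1 + m_2 = m_1 + k$.

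Since the whole argument is a one-step specialization of an already proven theorem, there is no genuine obstacle; the only care needed is bookkeeping — confirming that $v$ is the labelled cut vertex of the 1-sum, that $m_1 = |E(G_1)|$ so that the edge count of $G$ really is $m = m_1 + k$, and that ``$k$ pendant vertices attached at $v$'' is indeed the 1-sum $G_1 \oplus_v S_{k+1}$ at the star's center. As an alternative one could bypass Proposition \ref{prop:somekemenys} and compute $\mu(S_{k+1}, v)$ and $\kemeny(S_{k+1})$ by hand from $r_{S_{k+1}}(i,j) = 2$ for two leaves and $r_{S_{k+1}}(i,v) = 1$ for a leaf, but citing Proposition \ref{prop:somekemenys} keeps the proof short.
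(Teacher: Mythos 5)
Your proof is correct and is essentially identical to the paper's: both apply Theorem \ref{thm:kem1sepformula} with $G_2 = S_{k+1}$, read off $m_2 = k$, $\kemeny(S_{k+1}) = k - \tfrac12$, and $\mu(S_{k+1},v) = k$ from Proposition \ref{prop:somekemenys}, and simplify. The bookkeeping you flag (that attaching $k$ pendants at $v$ is the 1-sum with the star's center) is exactly the observation the paper relies on as well.
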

\begin{proof}
Using Proposition \ref{prop:somekemenys} and Theorem \ref{thm:kem1sepformula} we get
\begin{align*}
    \kemeny(G) &=\frac{m_1\left(\kemeny(G_1) + \mu(S_{k+1},v)\right)+k\left(\kemeny(S_{k+1}) + \mu(G_1,v)\right)}{m_1+k}\\
    &= \frac{m_1(\kemeny(G_1)+k) + k(k+1-\frac{3}{2}+\mu(G_1,v))}{m_1+k}\\
    &= \frac{m_1\kemeny(G_1)+k\mu(G_1,v) + k(m_1+k-\frac12)}{m_1+k}.
\end{align*}
\end{proof}

Now we state a result from electrical network theory that will be instrumental in the main proof of this section.
\begin{prop}[Mesh-Star Transform]\label{prop:meshstar}
A complete graph $K_n$ with unit resistance on each edge is equivalent to a star graph $S_{n+1}$ with resistance $\frac{1}{n}$ on each edge.
\end{prop}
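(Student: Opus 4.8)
The plan is to recall precisely what electrical equivalence means and then verify it by a short Laplacian computation. Two resistor networks that share a common set of terminal (boundary) vertices are \emph{equivalent} when they produce the same voltage–current relationship at those terminals; concretely, this holds exactly when the Schur complements of their weighted Laplacians onto the terminal set agree, and — since that Schur complement is determined by the pairwise effective resistances among the terminals — it also holds exactly when all those effective resistances agree. In our situation the terminals are the $n$ vertices of $K_n$, identified with the $n$ leaves of $S_{n+1}$; the center of the star is an internal (non-terminal) vertex that must be eliminated.

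The quickest route is via effective resistance, using Proposition \ref{prop:someresistances}: in $K_n$ with unit resistors, $r_{K_n}(i,j)=\tfrac2n$ for every pair $i\neq j$, whereas in $S_{n+1}$ with resistance $\tfrac1n$ on each edge the unique path between two leaves is a series of two such resistors, giving resistance $\tfrac1n+\tfrac1n=\tfrac2n$. All pairwise terminal resistances coincide, so the two networks are equivalent.

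Alternatively, I would compute the Schur complement directly. Index the center of $S_{n+1}$ by $0$ and the leaves by $1,\dots,n$; with conductance $n$ on each edge the weighted Laplacian has $L_{00}=n^2$, $L_{0i}=L_{i0}=-n$, $L_{ii}=n$, and $L_{ij}=0$ for distinct leaves $i,j$. Eliminating vertex $0$ yields
\[
    nI_n-(-n\mathbf 1)(n^2)^{-1}(-n\mathbf 1^T)=nI_n-\mathbf 1\mathbf 1^T,
\]
where $\mathbf 1$ is the all-ones vector, and $nI_n-\mathbf 1\mathbf 1^T$ is exactly the Laplacian of $K_n$ on the same vertex set; hence the two response matrices match.

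The only genuine subtlety is phrasing the equivalence correctly — it concerns the behavior seen at the terminal vertices (the Schur complement onto them), not the two graphs as a whole — after which the verification is a one-line calculation by either route.
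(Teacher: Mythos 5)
Your proposal is correct. Note, however, that the paper offers no proof of this proposition at all: it is stated as a classical fact from electrical network theory, so there is nothing to compare your argument against. Of your two routes, the Schur-complement computation is the one that does the real work: it shows the two networks have identical response matrices at the terminal vertices, which is exactly what licenses the substitution performed later in the proof of Theorem \ref{thm:starbraess}, where the $K_k$ is replaced by a weighted star \emph{inside} a larger network and resistances to an external vertex are then computed. The effective-resistance route is slicker but leans on the auxiliary fact that the pairwise terminal resistances determine the Schur-complement Laplacian (equivalently, that the resistance matrix of a connected weighted graph determines its Laplacian); that fact is standard (see \cite{bapat2010graphs}) but is doing nontrivial work, so you are right to flag it rather than treat the agreement of the numbers $\tfrac{2}{n}$ as the whole story. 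Both calculations check out: the star with conductance $n$ per edge has Schur complement $nI_n - \mathbf{1}\mathbf{1}^T$, which is precisely the Laplacian of $K_n$ with unit conductances.
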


\begin{theorem}\label{thm:starbraess}
Let $G=G_1\oplus_vS_{k+1}$ where $v$ is the central vertex of $S_{k+1}.$ Then any set of $l$ edges added in $S_{k+1}$ is a Braess set given
\begin{align*}
    |E(G_1)| >&\begin{cases}\frac{1}{8}\left[\sqrt{33l^2+50l+17}-l-1\right] &\text{if $l < k$}\\
    \frac{1}{8}\left[\sqrt{33k^2-30k+1}-k-1\right] &\text{if $l\geq k$}\end{cases}.
\end{align*}
\end{theorem}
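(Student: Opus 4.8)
The plan is to apply Corollary \ref{cor:braessset} directly: it suffices to show that the quantity $Am_1^2 + ((A+C)m_2 + Bl)m_1 + C(m_2^2 + lm_2)$ is strictly positive, where here $m_2 = |E(S_{k+1})| = k$, and $A = \mu(\widehat{G}_2,v) - \mu(S_{k+1},v)$, $B = \kemeny(\widehat{G}_2) - \mu(S_{k+1}, v)$, $C = \kemeny(\widehat{G}_2) - \kemeny(S_{k+1})$ with $\widehat G_2$ the star $S_{k+1}$ together with the $l$ added edges among its pendant vertices. So the first step is to compute $A$, $B$, and $C$ explicitly as functions of $k$ and $l$. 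The key tool here is the Mesh-Star Transform (Proposition \ref{prop:meshstar}): the $l$ added edges sit among the $k$ leaves, so $\widehat G_2$ is a star on the center plus a partially-completed graph on the leaves; in the extreme cases (few edges added, or all $\binom{k}{2}$ edges forming $K_{k+1}$) the transform lets us read off effective resistances inside $\widehat G_2$ cleanly, and we use Proposition \ref{prop:someresistances} and Proposition \ref{prop:somekemenys} together with the moment/Kemeny formulas to evaluate $\mu(\widehat G_2,v)$ and $\kemeny(\widehat G_2)$.

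Second, once $A$, $B$, $C$ are in hand, the Braess condition $Am_1^2 + ((A+C)k + Bl)m_1 + C(k^2+lk) > 0$ is a quadratic inequality in $m_1 = |E(G_1)|$. I expect $A \geq 0$ (adding edges among leaves can only increase resistances to far-away vertices weighted by their degrees — or at worst $A$ stays bounded), so the leading coefficient is nonnegative and the inequality holds for all $m_1$ exceeding the larger root of the quadratic. Solving the quadratic by the usual formula gives a bound of the shape $m_1 > \tfrac{1}{2A}\big(-((A+C)k+Bl) + \sqrt{((A+C)k+Bl)^2 - 4A C(k^2+lk)}\big)$; plugging in the computed values of $A,B,C$ and simplifying the discriminant should collapse to the stated expressions $\tfrac18[\sqrt{33l^2+50l+17}-l-1]$ and $\tfrac18[\sqrt{33k^2-30k+1}-k-1]$. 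The case split on whether $l < k$ or $l \geq k$ presumably reflects a qualitative change in the structure of $\widehat G_2$ (e.g. whether the added edges form a graph whose resistances behave like a sparse graph on the leaves versus nearly the complete graph), so $A,B,C$ have different closed forms in the two regimes, and I would handle each separately.

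The main obstacle will be the middle step: getting clean closed-form expressions for $\mu(\widehat G_2, v)$ and $\kemeny(\widehat G_2)$ when $\widehat G_2$ is a star plus an arbitrary set of $l$ edges among its leaves. For general $l$ the graph $\widehat G_2$ need not be vertex-transitive on the leaves or have any symmetry, so its effective resistances are not immediate. I suspect the resolution is that Corollary \ref{cor:braessset} is monotone in $A$ and $C$ in the right direction, so it is enough to bound $A$ and $C$ below (and $B$ appropriately) by their values in a worst-case configuration of the $l$ edges — likely a matching or a clique on a subset of leaves — for which the Mesh-Star Transform and the formulas of Proposition \ref{prop:somekemenys} give exact values; the two branches of the case split then correspond to which extremal configuration is the binding one. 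Verifying that the extremal configuration is indeed extremal, and that the resulting discriminant simplifies exactly to the radicands $33l^2+50l+17$ and $33k^2-30k+1$, is the computational heart of the argument.
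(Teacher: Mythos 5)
Your overall strategy is the paper's: invoke Corollary \ref{cor:braessset} with $m_2 = k$, lower-bound $A$, $B$, $C$, and solve the resulting quadratic in $m_1$ for its larger root. But two of the details you commit to would derail the computation. First, your justification for $A \geq 0$ is backwards: by Rayleigh monotonicity, adding edges can only \emph{decrease} the effective resistances $\hat{r}(i,v)$, not increase them, so the moment $\mu(\widehat{G}_2,v) = \sum_i \hat{d}_i \hat{r}(i,v)$ is pulled in two opposite directions (degrees up, resistances down) and its positivity gain is not automatic. The paper's resolution is to bound each $\hat{r}(i,v)$ \emph{below} given the degree $d_i$: the extremal configuration is the one in which all the other pendants together with $v$ form $K_k$, and the mesh--star transform (Proposition \ref{prop:meshstar}) plus series--parallel reduction gives $\hat{r}(i,v) \geq \frac{k+d_i}{d_i(k+1)}$, whence $\mu(\widehat{G}_2,v) \geq \frac{k^2+k+2l}{k+1}$ and $A \geq \frac{2l}{k+1} > 0$. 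For $B$ and $C$ you also need a lower bound on $\kemeny(\widehat{G}_2)$, which the paper gets from the fact that the complete graph minimizes Kemeny's constant among graphs on a fixed number of vertices, giving $B \geq \frac{-k}{k+1}$ and $C \geq \frac{-k}{k+1}+\frac12$.

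Second, your reading of the case split is not what happens: the two branches do not come from two different extremal configurations with different closed forms for $A,B,C$. Substituting the single set of bounds above into the quadratic yields one bound depending on both $k$ and $l$, namely $m_1 > \frac{1}{8l}\bigl[k^2-2kl-k+\sqrt{k(k^3-16l^2+2k^2(6l-1)+k(20l^2-12l+1))}\bigr]$. This expression is monotone increasing in $k$ and decreasing in $l$; the paper then argues that the relevant configurations of $l$ added edges touch at most $l+1$ of the pendants (edges between vertices that remain twin pendants are already known to be Braess), so for $l<k$ one may replace $k$ by $l+1$, producing $\frac18\bigl[\sqrt{33l^2+50l+17}-l-1\bigr]$, while for $l\geq k$ the substitution $l=k$ produces $\frac18\bigl[\sqrt{33k^2-30k+1}-k-1\bigr]$. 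Without this monotonicity-and-substitution step you would not arrive at a bound independent of $k$ in the first branch, and no choice of extremal edge configuration alone will give you that.
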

\begin{proof}
Using Corollary \ref{cor:braessset} we can determine whether or not  $\mathscr{L}$, the $l$ added edges, is a Braess set. For convenience let $f$ denote the expression in Corollary \ref{cor:braessset}. Note that $m_2 = k$ so we have $f = Am^2_1+((A+C)k+Bl)m_1+C(k^2+lk)$. As can be seen by Corollary \ref{cor:braessset}, if $A>0$ then $f>0$ so long as $m_1$ is sufficiently large. For convenience, let $S_{k+1} = G_2.$

To be sure that $\mathscr{L}$ is a Braess set we will find lower bounds for the terms $A,B,C$. 
By Proposition \ref{prop:somekemenys} we know that $\mu(G_2,v) = k$ and $\kemeny(G_2) = k-\frac{1}{2}$.
Now we look at $A$.


To find a suitable lower bound on $\mu(\widehat{G}_2,v)$ we will find the smallest possible resistance distance from one of the $k$ pendants, $i$, to $v$ given we know the degree of $i$, $d_i$. By Rayleigh's monotonicity law (\cite{klein1993resistance}, Lemma D), adding an edge in a graph can only either decrease or have no effect on resistance distance between two points. It follows then that the resistance $\hat{r}_2(i,v)$ when $d_i = d$ is smallest when enough edges are added between $v$ and the $k-1$ pendant vertices that are not $i$ to make a complete graph $K_k$ and then adding edges connected to $i$ until $d_i = d$.

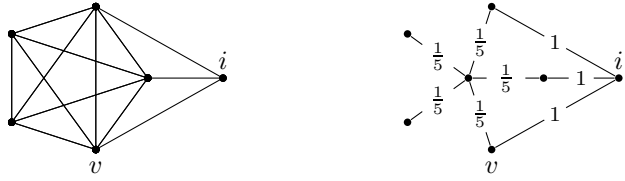
\begin{figure}[H]
    \centering
    \begin{tikzpicture}
    \tikzstyle{every node}=[circle, fill=black, inner sep=1pt]
    \foreach \x in {0, 72, 144, 216, 288}{
        \foreach \y in {0, 72, 144, 216, 288}{
            \ifthenelse{\equal{\x}{\y}}
            {}
            {\draw{(\x:1)node{}--(\y:1)node{}};}
        }
    }
    \draw{
    (0:2)node{}--(0:1)node{}
    (0:2)node{}--(72:1)node{}
    (0:2)node[label=$i$]{}--(288:1)node[label=below:{$v$}]{}
    };
\end{tikzpicture}
\qquad\qquad\qquad
\begin{tikzpicture}
    \tikzstyle{every node}=[circle, fill=black, inner sep=1pt]
    \node (1) at (0:0) {};
    \node (2) at (0:1) {};
    \node[label=$i$] (3) at (0:2) {};
    \node (4) at (72:1) {};
    \node (5) at (144:1) {};
    \node (6) at (216:1) {};
    \node[label=below:{$v$}] (7) at (288:1) {};
    
    \draw (3) -- (7) node [midway, minimum width = 2pt, fill=white] {\footnotesize $1$};
    \draw (3) -- (2) node [midway, fill=white] {\footnotesize $1$};
    \draw (3) -- (4) node [midway, fill=white] {\footnotesize $1$};
    \draw (1) -- (2) node [midway, fill=white] {\footnotesize $\frac{1}{5}$};
    \draw (1) -- (4) node [midway, fill=white] {\footnotesize $\frac{1}{5}$};
    \draw (1) -- (5) node [midway, fill=white] {\footnotesize $\frac{1}{5}$};
    \draw (1) -- (6) node [midway, fill=white] {\footnotesize $\frac{1}{5}$};
    \draw (1) -- (7) node [midway, fill=white] {\footnotesize $\frac{1}{5}$};
\end{tikzpicture}
    \caption{A mesh-star transformation of $K_5$ to $S_6$ with $d_i=3$}
    \label{fig:meshstar}
\end{figure}

By Proposition \ref{prop:meshstar}, this is equivalent to a star graph $S_{k+1}$ with edge weights of $\frac{1}{k}$ and a vertex connected to $d$ of the pendants of the star by edge weights of $1$. So $\hat{r}_2(i,v)$ is the same as the resistance from the vertex of degree $d$ to one of the adjacent vertices. If $d=1$ then $\hat{r}_2(i,v)=1$. Suppose $d>1$. Then using the series and parallel rules for circuit reductions we have
\begin{align*}
    \hat{r}_2(i,v) =\frac{1}{\frac1 1+\frac{1}{\frac{1}{k}+\frac{1}{\frac{d-1}{1+\frac{1}{k}}}}}=\frac{1}{1+\frac{1}{\frac{1}{k}+\frac{1+\frac{1}{k}}{d-1}}}=\frac{1}{1+\frac{k(d-1)}{d+k}} =\frac{k+d}{d(k+1)}.
\end{align*}

So we have the following.
\begin{align*}
    \mu(\widehat{G}_2, v_2) = \hat d^T_2\widehat R_2e_{v_2} =&\sum_{i\in\widehat G_2\setminus{\{v\}}}\hat d_i\hat r_2(i,v)\\
    \geq&\sum_{i\in\widehat G_2\setminus{\{v\}}}\frac{k+d_i}{d_i(k+1)}\\
    =& \frac{1}{k+1}\left[\sum_{i\in\widehat G_2\setminus{\{v\}}}k + \sum_{i\in\widehat G_2\setminus{\{v\}}}d_i\right]\\
    =&\frac{k^2+k+2l}{k+1}
\end{align*}
 So we have $A = \mu(\widehat{G}_2,v) - \mu(G_2,v) \geq \frac{2l}{k+1}$. 

For the bounds on $B$ and $C$ we will make use of the fact proven in \cite{palacios2010bounds} that for an undirected graph on $n$ vertices, Kemeny's constant is smallest for a complete graph where $\kemeny(K_n) = \frac{(n-1)^2}{n}$. Then $B=\kemeny(\widehat G_2) - \mu(G_2,v) \geq \frac{-k}{k+1}$ and $C=\kemeny(\widehat G_2) - \kemeny(G_2) \geq\frac{-k}{k+1}+\frac{1}{2}$. So setting $B,C$ equal to their lower bounds we have $B = \frac{-k}{k+1}$ and $C=B + \frac1 2$.

Now let $\tilde f$ be $f$ where $A,B,C$ have been replaced by these lower bounds. Hence if $\tilde f>0$ then $f>0$. Then treating $\tilde f$ as a quadratic of variable $m_1$ it is seen that $\tilde f>0$ whenever
\begin{equation}\label{eq:pendantbound}
    |E(G_1)| = m_1 > \frac{1}{8l}\left[k^2-2kl-k+\sqrt{k(k^3-16l^2+2k^2(6l-1)+k(20l^2-12l+1))}\right]
\end{equation}

This bound on $m_1$ as is has some issues. For example, if $l=1$ it is known that the edge will be Braess for all nontrivial connected graphs. However, as $k$ increases (\ref{eq:pendantbound}) only guarantees this edge to be Braess for increasingly large graphs. For instance if $k=10$ and $l=1$ (\ref{eq:pendantbound}) would suggest we need $m_1 > 26$ which is unnecessarily large.

This behavior can be improved. For fixed $l$, (\ref{eq:pendantbound}) is monotonically increasing in $k$ for values of $k$ that make sense. Since often times adding $l$ edges won't affect all $k$ pendants we can look to treat such cases as having a smaller value of $k$. Since any edge added between twin pendant vertices will be Braess and since this function is increasing in $k$, we look for the largest number of pendants affected (new value of $k$ to work with) by adding an edge while never adding an edge between twin pendants if possible. For $l<k$ this new value of $k$ is $l+1$. Thus, if $l<k$ then plugging $k=l+1$ into the bound gives that a Braess set of edges is guaranteed if $m_1 > \frac{1}{8}\left[\sqrt{33l^2+50l+17}-l-1\right]$.

If $l \geq k$ then we can come to a cleaner (though slightly rougher) expression as well. The bound is strictly decreasing in $l$ for fixed $k$. Then for $l\geq k$, the substitution $l=k$ yields that the added edges form a Braess set whenever $m_1 > \frac{1}{8}\left[\sqrt{33k^2-30k+1}-k-1\right]$.

\end{proof}


\subsubsection{Pendant Triplets}\label{sec:pentrip}
While we now have a bound for conditions in which $l$ edges added to a group of $k$ pendants attached at a vertex in a graph are Braess, if we choose a specific $k$ we can work out more specifically how the addition of certain edges affects Kemeny's constant. Here we examine the case of $k=3$.

While these are the same types of graphs examined in Section \ref{sec:1sep}, to avoid excessive subscripts we redefine these specific graphs in a new way.

Let $G$ be a connected graph, $|V(G)|=n$, and $|E(G)| = m$. Let $\bar{G}$ be the graph obtained from $G$ by attaching 3 pendant vertices at $v$. We will first provide a formula for Kemeny's constant $\kemeny(\bar{G})$ in terms of the graph $G$.

\begin{lemma}\label{lem:kemG}
Let $\bar{G}$ be a graph as described above. 
Then
\begin{align*}
    \kemeny(\bar{G}) =\:\frac{2m\kemeny(G)+6\mu(G,v)+6m+15}{2m+6} 
\end{align*}
\end{lemma}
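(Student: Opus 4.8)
The plan is to recognize $\bar G$ as a $1$-sum and apply Theorem \ref{thm:kem1sepformula} directly, exactly as in the proof of Theorem \ref{thm:kemenykpendants}. Indeed, attaching $3$ pendant vertices at $v \in V(G)$ is precisely forming the $1$-sum $\bar G = G \oplus_v S_4$, where $v$ is identified with the central vertex of the star $S_4$ on $4$ vertices. Here $G_1 = G$ with $m_1 = m$ edges, and $G_2 = S_4$ with $m_2 = |E(S_4)| = 3$ edges.

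First I would invoke Proposition \ref{prop:somekemenys} with $n = 4$ to record the two quantities we need for the star: $\kemeny(S_4) = 4 - \tfrac32 = \tfrac52$ and $\mu(S_4, 1) = 4 - 1 = 3$, where the root $1$ is the central vertex. Then I would substitute $m_1 = m$, $m_2 = 3$, $\kemeny(G_2) = \tfrac52$, $\mu(G_2, v) = 3$ into the formula of Theorem \ref{thm:kem1sepformula},
\begin{align*}
    \kemeny(\bar G) &= \frac{m_1\bigl(\kemeny(G_1) + \mu(G_2,v)\bigr) + m_2\bigl(\kemeny(G_2) + \mu(G_1,v)\bigr)}{m_1 + m_2} \\
    &= \frac{m\bigl(\kemeny(G) + 3\bigr) + 3\bigl(\tfrac52 + \mu(G,v)\bigr)}{m + 3}.
\end{align*}
Expanding the numerator gives $m\kemeny(G) + 3m + 3\mu(G,v) + \tfrac{15}{2}$, and multiplying numerator and denominator by $2$ yields $\dfrac{2m\kemeny(G) + 6m + 6\mu(G,v) + 15}{2m + 6}$, which is exactly the claimed expression.

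This is essentially a one-line computation, so there is no real obstacle; the only thing to be careful about is the bookkeeping of the constant terms (the $\tfrac52$ and the resulting $\tfrac{15}{2}$) and clearing the denominator correctly. One could alternatively derive it as the $k=3$ special case of Theorem \ref{thm:kemenykpendants}, whose formula $\kemeny(G) = \frac{m_1\kemeny(G_1) + k\mu(G_1,v) + k(m_1 + k - \tfrac12)}{m_1 + k}$ gives, with $k = 3$ and $m_1 = m$, the value $\frac{m\kemeny(G) + 3\mu(G,v) + 3(m + \tfrac52)}{m+3}$, agreeing after the same clearing of denominators; I would mention this as a consistency check but present the direct $1$-sum computation as the proof.
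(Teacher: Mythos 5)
Your proposal is correct and is essentially the paper's own argument: the paper simply cites Theorem \ref{thm:kemenykpendants} with $k=3$, which is itself proved by the very $1$-sum computation with $S_4$ that you carry out explicitly. The arithmetic ($\kemeny(S_4)=\tfrac52$, $\mu(S_4,v)=3$, and the clearing of denominators) all checks out.
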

\begin{proof}
Using Theorem \ref{thm:kemenykpendants} with $k=3$ gives the result.
\hidden{
OLD PROOF:
Suppose $a,b,c$ are the three vertices added to $G$ to make $\bar{G}$ so $|V(\bar{G})| =n+3$ and $|E(\bar{G})|=m+3$. Then $\bar{d}$ has the following description:
\begin{align*}
    \bar{d_i} = \begin{cases} d_i & \text{if $i\in G\setminus\{v\}$}\\
    d_v+3 & \text{if $i=v$}\\
    1 & \text{if $i\in \{a,b,c\}$}\end{cases}
\end{align*}

Using (REWORD imo) the facts that resistance distance simply adds over a 1-separation and some resistance distance is the same as distance in a tree, we get the following for the Resistance matrix $\bar{R}$.
\begin{align*}
    \bar{r}_{ij} = \begin{cases} r_{ij} & \text{ if $i,j\in G$}\\
    2 & \text{if $i,j\in \{a,b,c\}, i \neq j$}\\
    r_{vj}+1 & \text{if $i\in\{a,b,c\}, j\in G$}\end{cases}
\end{align*}
Now we can write
\begin{align*}
    \bar{d}^T\bar{R}\bar{d} =&\:\sum_{i,j\in G\setminus\{v\}}\bar{d_i}\bar{d_j}\bar{r}_{ij} + 2\sum_{j\in G\setminus\{v\}}\bar{d_v}\bar{d_j}\bar{r}_{vj} + 2\sum_{\substack{i\in\{a,b,c\}\\j\in G\setminus\{v\}}}\bar{d_i}\bar{d_j}\bar{r}_{ij}\\
    &+ 2\sum_{j\in \{a,b,c\}}\bar{d_v}\bar{d_j}\bar{r}_{vj} + \sum_{i,j\in\{a,b,c\}}\bar{d_i}\bar{d_j}\bar{r}_{ij}\\
    =&\:\sum_{i,j\in G\setminus\{v\}}d_id_jr_{ij} + 2\sum_{j\in G\setminus\{v\}}(d_v+3)d_jr_{vj}+ 2\cdot3\sum_{j\in G\setminus\{v\}}1\cdot d_j(r_{vj}+1)\\
    &+2\cdot3(d_v+3)\cdot1\cdot1 + 6\cdot1\cdot1\cdot2\\
    =&\:d^TRd + 6\sum_{j\in G\setminus\{v\}}d_jr_{vj} + 6\sum_{j\in G\setminus\{v\}}d_jr_{vj} + 6\sum_{j\in G\setminus\{v\}}d_j +6d_v +30\\
    =&\:d^TRd+12d^TRe_v+6(2m-d_v)+6d_v+30\\
    =&\:d^TRd+12d^TRe_v+12m+30
\end{align*}
Then by Theorem \ref{thm:kemeny} we have the result.
}
\end{proof}

Now we will define another graph $\widetilde{G}$ which is obtained from $\bar{G}$ by adding an edge between two of the vertices in $\{a,b,c\}$. For later comparisons we will derive an expression for the Kemeny's constant of $\widetilde{G}$ in terms of the graph $G$.

\begin{lemma}\label{lem:trip1edge}
Let $\widetilde{G}$ be as above. Then
\begin{align*}
    \kemeny(\widetilde{G}) =\: \frac{6m\kemeny(G)+24\mu(G,v)+22m+61}{6m+24} 
\end{align*}
\end{lemma}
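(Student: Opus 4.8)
The plan is to mimic the proof of Lemma \ref{lem:kemG}: view $\widetilde{G}$ as a $1$-sum $G \oplus_v H$, where $H$ is the graph on the vertex set $\{v, a, b, c\}$ consisting of the two pendant edges $\{v,a\}$, $\{v,b\}$ together with the extra edge $\{b,c\}$ — equivalently, attach a path $v\!-\!b\!-\!c$ and a pendant $a$ at $v$. Then apply Theorem \ref{thm:kem1sepformula} with $m_1 = m$, $G_1 = G$, $G_2 = H$, so that $m_2 = |E(H)| = 3$ and
\begin{equation*}
    \kemeny(\widetilde{G}) = \frac{m\big(\kemeny(G) + \mu(H, v)\big) + 3\big(\kemeny(H) + \mu(G, v)\big)}{m + 3}.
\end{equation*}
So the whole task reduces to computing the two graph invariants $\mu(H,v)$ and $\kemeny(H)$ for this fixed four-vertex graph $H$.

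First I would pin down the effective resistances and degrees in $H$. The degree sequence is $d_v = 2$, $d_a = 1$, $d_b = 2$, $d_c = 1$; since $H$ is a tree (it is a path $a\!-\!v\!-\!b\!-\!c$), resistances are graph distances, so $r_H(a,v)=1$, $r_H(b,v)=1$, $r_H(c,v)=2$, and $r_H(a,b)=2$, $r_H(a,c)=3$, $r_H(b,c)=1$. Wait — I should double check whether the extra edge is between two \emph{leaves} of the star, i.e. between $b$ and $c$ where both were pendants of $S_4$; after adding $\{b,c\}$ the graph on $\{v,a,b,c\}$ is indeed the path $a - v - b - c$, a tree, so Proposition \ref{prop:somekemenys} with $P_4$ (suitably rooted) or a direct computation applies. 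Then $\mu(H,v) = \sum_i d_i\, r_H(i,v) = 1\cdot 1 + 2\cdot 1 + 1\cdot 2 = 5$, and $\kemeny(H) = \frac{1}{4\cdot 3}\sum_{i,j} d_i d_j r_H(i,j)$, which is a routine finite sum over the four vertices; I expect it to come out to $\tfrac{61}{12}$ (so that $3\kemeny(H) = \tfrac{61}{4}$, matching the constant $61$ and denominator $6m+24 = 6(m+3)/1$ after clearing). Alternatively, since $H \cong P_4$ as an abstract graph, one can read $\kemeny(H) = \kemeny(P_4) = \tfrac{2(16)-16+3}{6} = \tfrac{19}{6}$ — but note the root vertex $v$ is the \emph{second} vertex of the path, not an endpoint, so for $\mu$ one must use $\mu(P_4, 2)$, not $\mu(P_4,1)$; from Proposition \ref{prop:somekemenys}, $\mu(P_4,2) = (4-2)^2 + (2-1)^2 = 5$, consistent with the direct count. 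The Kemeny value $\kemeny(P_4) = \tfrac{19}{6}$ is a single number independent of the root, so that is unambiguous.

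Substituting $\mu(H,v) = 5$, $\kemeny(H) = \tfrac{19}{6}$, $m_2 = 3$ into the Theorem \ref{thm:kem1sepformula} expression gives
\begin{equation*}
    \kemeny(\widetilde{G}) = \frac{m\big(\kemeny(G) + 5\big) + 3\big(\tfrac{19}{6} + \mu(G,v)\big)}{m+3}
    = \frac{m\,\kemeny(G) + 3\,\mu(G,v) + 5m + \tfrac{19}{2}}{m+3},
\end{equation*}
and multiplying numerator and denominator by $6$ yields
\begin{equation*}
    \kemeny(\widetilde{G}) = \frac{6m\,\kemeny(G) + 18\,\mu(G,v) + 30m + 57}{6m + 18},
\end{equation*}
which does \emph{not} match the claimed $\tfrac{6m\kemeny(G)+24\mu(G,v)+22m+61}{6m+24}$. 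The discrepancy in the denominator ($6m+18$ vs.\ $6m+24$) tells me I have miscounted $|E(H)|$: $\widetilde G$ is obtained from $\bar G$ (which has $m+3$ edges) by \emph{adding} an edge, so $|E(\widetilde G)| = m+4$ and hence $m_2 = |E(H)| = 4$. The correct $H$ is the star $S_4$ (edges $\{v,a\},\{v,b\},\{v,c\}$) \emph{plus} the extra edge $\{b,c\}$ — a triangle $v,b,c$ with a pendant $a$ at $v$, which is \emph{not} a tree. So the real work, and the step I expect to be the main obstacle, is computing the effective resistances in this $4$-vertex graph $H$: use the parallel rule on the two $v$–$\{b,c\}$ routes (direct edges $\{v,b\},\{v,c\}$ in parallel with the path through the triangle) to get $r_H(v,b) = r_H(v,c) = \tfrac{2}{3}$, $r_H(v,a) = 1$, $r_H(b,c) = \tfrac{2}{3}$, $r_H(a,b) = r_H(a,c) = \tfrac{5}{3}$; with degrees $d_v = 3, d_a = 1, d_b = d_c = 2$ this gives $\mu(H,v) = 1\cdot 1 + 2\cdot\tfrac23 + 2\cdot\tfrac23 = \tfrac{11}{3}$ and $\kemeny(H) = \tfrac{1}{16}\sum d_i d_j r_H(i,j)$, which I expect to evaluate so that, after plugging into Theorem \ref{thm:kem1sepformula} with $m_2 = 4$ and clearing denominators by $6$, the stated formula emerges. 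So the proof is: (1) write $\widetilde G = G \oplus_v H$ with this explicit $4$-vertex $H$; (2) compute $r_H$, $\mu(H,v)$, $\kemeny(H)$ by series–parallel reductions and a finite sum; (3) substitute into Theorem \ref{thm:kem1sepformula} and simplify. The only genuinely delicate point is getting the small-graph invariants of $H$ exactly right; everything else is bookkeeping.
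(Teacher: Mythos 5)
Your corrected identification of $H$ (the star $S_4$ plus one edge between two of the pendants, i.e.\ a triangle on $v,b,c$ with a pendant $a$ attached at $v$, so $m_2=4$) together with the invariants $\mu(H,v)=\tfrac{11}{3}$ and $\kemeny(H)=\tfrac{61}{24}$ is exactly what the paper uses, and substituting these into Theorem \ref{thm:kem1sepformula} yields the stated formula. This is essentially the paper's proof, just with the small-graph resistance computations written out explicitly.
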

\begin{proof}
It can be shown that in this case that $\kemeny(G_2) = \frac{61}{24}$ and $\mu(G_2, v) = \frac{11}{3}$. Using this, an easy simplification of Theorem \ref{thm:kem1sepformula} gives the result.
\hidden{
Suppose the added edge from $\bar{G}$ to $\Tilde{G}$ is between vertex $a$ and vertex $b$. Then the degree vector $\Tilde{d}$ is as follows.
\begin{align*}
    \Tilde{d}_i = \begin{cases}d_i & \text{if $i\in G\setminus\{v\}$} \\
    d_v+3 & \text{if $i=v$}\\
    2 & \text{if $i\in\{a,b\}$}\\
    1 &\text{if $i=c$}\end{cases}
\end{align*}
Using Kirchhoff's Laws one finds that resistance between two nodes in $C_3$ is $\frac{2}{3}$. Then combined with the additivity of resistance distance over 1-separations we get the following for $\Tilde{R}$. 
\begin{align*}
    \Tilde{r}_{ij} = \begin{cases}r_{ij} & \text{if $i,j\in G$}\\
    \frac{2}{3} &\text{if $i,j\in\{a,b\}, i\neq j$}\\
    r_{jv}+\frac{2}{3} &\text{if $i\in\{a,b\}, j\in G$}\\
    r_{jv}+1 & \text{if $i=c, j\in G$}\\
    \frac{5}{3} & \text{if $i\in\{a,b\}, j=c$}\end{cases}
\end{align*}
Now we can write 
\begin{align*}
    \Tilde{d}^T\Tilde{R}\Tilde{d} =& \sum_{i,j\in G\setminus\{v\}}\Tilde{d}_i\Tilde{d}_j\Tilde{r}_{ij} + 2\sum_{j\in G\setminus\{v\}}\Tilde{d}_v\Tilde{d}_j\Tilde{r}_{vj} + 2\sum_{\substack{i\in\Gnv\\ j\in\{a,b,c\}}}\Tilde{d}_i\Tilde{d}_j\Tilde{r}_{ij}\\
    &+\sum_{i,j\in\{a,b,c\}}\Tilde{d}_i\Tilde{d}_j\Tilde{r}_{ij} + 2\sum_{j\in\{a,b,c\}}\Tilde{d}_v\Tilde{d}_j\Tilde{r}_{vj}\\ 
    =&\:\sum_{i,j\in\Gnv}d_id_jr_{ij}+2\sum_{i\in\Gnv}d_i(d_v+3)r_{iv}+2\sum_{\substack{i\in\Gnv\\j\in\{a,b\}}}d_i\cdot2\cdot(r_{iv}+\frac 2 3)\\
    &+2\sum_{i\in\Gnv}d_i(r_{iv}+1)+2\sum_{i\in\{a,b\}}d_i(d_v+3)\cdot\frac{2}{3} +2(d_v+3)\cdot1\cdot1\\
    &+2\cdot(2\cdot2\cdot\frac{2}{3})+2\sum_{i\in\{a,b\}}2\cdot1\cdot\frac{5}{3}\\
    =&\:d^TRd + 6\sum_{i\in\Gnv}d_ir_{iv} + 4\sum_{\substack{i\in\Gnv\\j\in\{a,b\}}}d_ir_{iv}+ \frac{8}{3}\sum_{\substack{i\in\Gnv\\j\in\{a,b\}}}d_i+2\sum_{i\in\Gnv}d_ir_{iv}\\
    &+\frac{4}{3}\sum_{\substack{i\in\Gnv\\j\in\{a,b\}}}d_vd_i + 4\sum_{i\in\{a,b\}}d_i+2d_v+6+\frac{16}{3}+\frac{40}{3}\\
    =&\:d^TRd+6d^TRe_v++8d^TRe_v+\frac{16}{3}(2m-d_v)+2d^TRe_v+2(2m-d_v)+\frac{16d_v}{3}\\
    &+16+2d_v+6+\frac{16}{3}+\frac{40}{3}\\
    =&d^TRd+16d^TRe_v+\frac{44m}{3}+\frac{122}{3}.
\end{align*}
Then using Theorem \ref{thm:kemeny} and the fact that $|E(\Tilde{G})| = m+4$ yields the desired result.
}
\end{proof}

Define the graph $\widehat{G}$ from $\widetilde{G}$ in the following way. Suppose $\widetilde{G}$ has an edge $ab$. Then adding the edge $bc$ will give $\widehat G$.

\begin{lemma}\label{lem:trip2edge}
Let $\widehat G$ be as above. Then
\begin{align*}
    \kemeny(\widehat G) =\: \frac{4m\kemeny(G) + 20\mu(G,v)+16m+47}{4m+20}
\end{align*}
\end{lemma}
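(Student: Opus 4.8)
The plan is to recognize $\widehat G$ as a 1-sum $G \oplus_v G_2$ where $G_2$ is the graph obtained from the star $S_4$ (center $v$, leaves $a,b,c$) by adding the two edges $ab$ and $bc$; that is, $G_2$ is the ``path plus center'' on the four vertices $\{v,a,b,c\}$ with $v$ adjacent to all of $a,b,c$, plus the path $a$-$b$-$c$. This $G_2$ has $|E(G_2)| = 3 + 2 = 5$ edges, so in the notation of Theorem \ref{thm:kem1sepformula} we have $m_2 = 5$, $m_1 = m$, and the total edge count of $\widehat G$ is $m + 5$, matching the denominator $4m + 20 = 4(m+5)$ up to the factor $4$ that will appear after substituting $\kemeny(G_2)$.

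The key steps, in order: (1) Compute the two invariants of $G_2$ that feed into Theorem \ref{thm:kem1sepformula}, namely $\kemeny(G_2)$ and $\mu(G_2, v)$. For this I would first write down the effective resistances within $G_2$: by symmetry $a$ and $c$ play the same role, and I can get $r_{G_2}(v,a)$, $r_{G_2}(v,b)$, $r_{G_2}(a,b)$, $r_{G_2}(a,c)$ either by a short series/parallel reduction (the triangle-like structure here is small enough) or by directly inverting the $4\times 4$ Laplacian / using the Laplacian pseudoinverse. The degrees in $G_2$ are $d_v = 3$, $d_a = d_c = 2$, $d_b = 3$. Then $\mu(G_2,v) = d_a r_{G_2}(a,v) + d_b r_{G_2}(b,v) + d_c r_{G_2}(c,v)$ and $\kemeny(G_2) = \frac{1}{4\cdot 5}\sum_{i,j} d_i d_j r_{G_2}(i,j)$ by Lemma \ref{thm:kemeny}. (2) Substitute these constants, together with $m_1 = m$ and $m_2 = 5$, into the formula
\[
    \kemeny(\widehat G) = \frac{m\bigl(\kemeny(G) + \mu(G_2,v)\bigr) + 5\bigl(\kemeny(G_2) + \mu(G,v)\bigr)}{m + 5}.
\]
(3) Simplify: the $m\,\mu(G_2,v)$ term should produce the $16m$ (so $\mu(G_2,v)$ ought to come out to $16/5$, giving $16m/5$, then clearing the denominator by $4$), the $5\,\mu(G,v)$ term matches $20\mu(G,v)$ after multiplying through, the $m\kemeny(G)$ matches $4m\kemeny(G)$, and $5\kemeny(G_2)$ should give the constant $47/4$ so that $20\kemeny(G_2) = 47$. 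So I expect $\kemeny(G_2) = 47/20$ and $\mu(G_2,v) = 16/5$; a quick sanity check of these two numbers against the resistance computation in step (1) is essentially the whole content.

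The only real obstacle is step (1): getting the effective resistances in $G_2$ right, since $G_2$ is not a tree (it has two independent cycles, $vab$ and $vbc$). I would double-check by an independent method — e.g. compute $\kemeny(G_2)$ directly from the transition matrix or confirm $r_{G_2}(a,c)$ via the parallel combination of the two-edge path $a$-$b$-$c$ against the path $a$-$v$-$c$ after first reducing. Everything after that is the same routine substitution used in Lemmas \ref{lem:kemG} and \ref{lem:trip1edge}, and indeed the proof can simply cite ``$\kemeny(G_2)$ and $\mu(G_2,v)$ are computed as above; substituting into Theorem \ref{thm:kem1sepformula} and simplifying gives the result,'' mirroring the style of the preceding two lemmas.
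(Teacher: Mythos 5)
Your approach is exactly the paper's: the proof there simply records $\kemeny(G_2)=\tfrac{47}{20}$ and $\mu(G_2,v)=4$ for the five-edge graph $G_2$ on $\{v,a,b,c\}$ and substitutes into Theorem \ref{thm:kem1sepformula}. One arithmetic slip in your back-solving, though: after clearing the denominator from $m+5$ to $4m+20$ the term $m\,\mu(G_2,v)$ becomes $4m\,\mu(G_2,v)$, so matching $16m$ forces $\mu(G_2,v)=4$, not $\tfrac{16}{5}$. Your forward computation in step (1) would in fact produce $4$: the resistances are $r_{G_2}(v,a)=r_{G_2}(v,c)=\tfrac58$ and $r_{G_2}(v,b)=\tfrac12$, so with degrees $d_a=d_c=2$, $d_b=3$ one gets $\mu(G_2,v)=2\cdot\tfrac58+3\cdot\tfrac12+2\cdot\tfrac58=4$, and the remaining resistances $r_{G_2}(a,b)=r_{G_2}(b,c)=\tfrac58$, $r_{G_2}(a,c)=1$ give $\kemeny(G_2)=\tfrac{47}{20}$ as you predicted. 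With that correction the substitution yields the stated formula.
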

\begin{proof}
It can be shown that $\kemeny(G_2) = \frac{47}{20}$ and $\mu(G_2, v) = 4$. The result follows from Theorem \ref{thm:kem1sepformula}.
\hidden{
The degree vector $\hat d$ for $\hat G$ is the following.
\begin{align*}
    \hat{d}_i = \begin{cases} d_i & \text{if $i\in G\setminus\{v\}$}\\
    d_v+3 & \text{if $i = v$}\\
    2 & \text{if $i\in \{a,c\}$}\\
    3 & \text{if $i = b$}
    \end{cases}
\end{align*}
The resistance distance matrix $\hat R$ is the following.
\begin{align*}
    \hat{r}_{ij} = \begin{cases} r_{ij} &\text{if $i,j\in G$}\\
    1 &\text{if $i,j\in\{a,c\}, i\neq j$}\\
    \frac{5}{8} &\text{if $i\in\{a,c\}, j \in\{b,v\}$}\\
    \frac{1}{2} &\text{if $i,j\in\{b,v\}, i\neq j$}\\
    r_{iv}+\frac{5}{8} &\text{if $i\in G, j\in\{a,c\}$}\\
    r_{iv}+\frac{1}{2} &\text{if $i\in G, j=b$}
    \end{cases}
\end{align*}
Then we have the following.
\begin{align*}
    \hat{d}^T\hat{R}\hat{d} =& \sum_{i,j\in G\setminus\{v\}}\hat{d}_i\hat{d}_j\hat{r}_{ij} + 2\sum_{j\in G\setminus\{v\}}\hat{d}_v\hat{d}_j\hat{r}_{vj} + 2\sum_{\substack{j\in\Gnv\\i\in\{a,c\}}}\hat{d}_i\hat{d}_j\hat{r}_{ij}\\
    &+ 2\sum_{j\in G\setminus\{v\}}\hat{d}_b\hat{d}_j\hat{r}_{bj} + 2\sum_{j\in\{a,c\}}\hat{d}_v\hat{d}_j\hat{r}_{vj} + 2\hat{d}_b\hat{d}_v\hat{r}_{bv}\\
    &+2\sum_{j\in\{a,c\}}\hat{d}_b\hat{d}_j\hat{r}_{bj} + 2\hat{d}_a\hat{d}_c\hat{r}_{ac}\\
    =&\: \sum_{i,j\in\Gnv}d_id_jr_{ij} + 2\sum_{i\in\Gnv}d_i(d_v+3)r_{iv}+2\sum_{\substack{i\in\Gnv\\j\in\{a,c\}}}d_i\cdot2\cdot(r_{iv}+\frac{5}{8})\\
    &+2\sum_{i\in\Gnv}d_i\cdot3\cdot(r_{iv}+\frac{1}{2})+2\sum_{i\in\{a,c\}}(d_v+3)\cdot2\cdot\frac{5}{8}+2(d_v+3)(3)\cdot\frac{1}{2}\\
    &+2(2)(2)(3)(\frac{5}{8})+2(2)(2)(1)\\
    =&\: d^TRd + 6\sum_{i\in\Gnv}d_ir_{iv}+ 8\sum_{i\in\Gnv}d_ir_{iv}+5\sum_{i\in\Gnv}d_i+6\sum_{i\in\Gnv}d_ir_{iv}\\
    &+3\sum_{i\in\Gnv}d_i+5(d_v+3)+3(d_v+3)+15+8\\
    =&\: d^TRd+20d^TRe_v+8(2m-d_v)+5d_v+15+3d_v+9+23\\
    =&\: d^TRd+20d^TRe_v+16m+47
\end{align*}
Then using Theorem \ref{thm:kemeny} and the fact that $|E(\hat G)| = m+5$ yields the desired result.
}
\end{proof}

Define $G^*$ from $\widehat{G}$ by adding the edge $\{a, c\}$.
\begin{lemma}\label{lem:tripK4}
Let $G^*$ be as above. Then
\begin{align*}
    \kemeny(G^*) =\: \frac{2m\kemeny(G) + 12\mu(G, v) + 9m+27}{2m+12}
\end{align*}
\end{lemma}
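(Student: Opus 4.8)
The plan is to observe that $G^{*}$ is precisely the $1$-sum $G \oplus_v K_4$, where the $K_4$ lives on the vertex set $\{v, a, b, c\}$: starting from the three pendants $a, b, c$ attached at $v$ and successively inserting $\{a,b\}$, $\{b,c\}$, $\{a,c\}$ produces the complete graph on those four vertices. In particular the second component has $m_2 = |E(K_4)| = 6$ edges while $m_1 = |E(G)| = m$, so that $G^{*}$ has $m+6$ edges, consistent with the three edge additions carried out across Lemmas \ref{lem:trip1edge} and \ref{lem:trip2edge}.

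Once this is noted, the statement follows by a direct application of Theorem \ref{thm:kem1sepformula}. First I would record the two quantities needed for the clique component, both read off from Proposition \ref{prop:somekemenys}: $\kemeny(K_4) = \tfrac{(4-1)^2}{4} = \tfrac94$ and $\mu(K_4, v) = \tfrac{2(4-1)^2}{4} = \tfrac92$. Substituting $G_1 = G$, $G_2 = K_4$, $m_1 = m$, $m_2 = 6$ into the $1$-separation formula gives
\[
    \kemeny(G^{*}) = \frac{m\left(\kemeny(G) + \tfrac92\right) + 6\left(\tfrac94 + \mu(G,v)\right)}{m + 6}.
\]
Clearing the fractions in the numerator (multiplying numerator and denominator by $2$) and collecting terms then yields $\dfrac{2m\kemeny(G) + 12\mu(G,v) + 9m + 27}{2m + 12}$, which is the claimed expression.

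There is essentially no obstacle here; the one point requiring care is the bookkeeping of which four vertices form the clique, and hence that $m_2 = 6$ (not $3$ or $4$, which are the edge counts after the intermediate additions). As with the other lemmas of this subsection, the proof amounts to a one-line invocation of Theorem \ref{thm:kem1sepformula} together with the tabulated Kemeny constant and moment of a small complete graph from Proposition \ref{prop:somekemenys}.
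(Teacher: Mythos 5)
Your proposal is correct and matches the paper's own argument exactly: the paper likewise identifies $G^* = G \oplus_v K_4$, reads $\kemeny(K_4) = \tfrac94$ and $\mu(K_4,v) = \tfrac92$ from Proposition \ref{prop:somekemenys}, and substitutes into Theorem \ref{thm:kem1sepformula}. Your version simply spells out the arithmetic that the paper leaves implicit.
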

\begin{proof}
In this case we have $G_2 = K_4$. By Proposition \ref{prop:somekemenys} $\kemeny(K_4) = \frac{9}{4}$ and $\mu(K_4,v) = \frac{9}{2}.$
\hidden{
The degree vector $d^*$ is given by the following.
\begin{align*}
    d^*_i = \begin{cases} d_i &\text{if $i\in G$}\\
    d_v+3 &\text{if $i = v$}\\
    3 & \text{if $i\in\{a,b,c\}$} \end{cases}
\end{align*}
The resistance matrix is given by the following.
\begin{align*}
    r^*_{ij} = \begin{cases} r_{ij} &\text{if $i,j\in G$}\\
    \frac{1}{2} &\text{if $i,j\in\{a,b,c\}, i\neq j$}\\
    r_{vj}+\frac{1}{2} &\text{if $i\in\{a,b,c\}, j\in G$} \end{cases}
\end{align*}
Then we have the following.
\begin{align*}
    d^{*T}R^* d^* =& \sum_{i,j\in G\setminus\{v\}}d^*_id^*_jr^*_{ij} + 2\sum_{j\in G\setminus\{v\}}d^*_vd^*_jr^*_{ij} + 2\sum_{\substack{i\in\{a,b,c\}\\j\in G\setminus\{v\}}}d^*_id^*_jr^*_{ij}\\
    &+ 2\sum_{j\in\{a,b,c\}}d^*_vd^*_jr^*_{vj} + \sum_{i,j\in\{a,b,c\}}d^*_id^*_jr^*_{ij}\\
    =&\: \sum_{i,j\in G\setminus\{v\}}d_id_jr_{ij} + 2\sum_{j\in G\setminus\{v\}}(d_v+3)d_jr_{vj} + 2\cdot3\sum_{j\in G\setminus\{v\}}3d_j(r_{vj}+\frac{1}{2})\\
    &+ 2\cdot3(d_v+3)\cdot3\cdot\frac{1}{2} + 6\cdot3\cdot3\cdot\frac{1}{2}\\
    =&\: d^TRd + 6\sum_{j\in G\setminus\{v\}}d_jr_{vj} + 18\sum_{j\in G\setminus\{v\}}d_jr_{vj} + 9\sum_{j\in G\setminus\{v\}}d_j+ 9d_v + 54\\
    =&\:d^TRd + 24d^TRe_v + 9(2m-d_v)+9d_v +54\\
    =&\: d^TRd+24d^TRe_v+18m+54
\end{align*}
Then by using Theorem \ref{thm:kemeny} and the fact that $|E(G^*)| = m+6$ yields the desired result.
}
\end{proof}

Now that the Kemeny's constant of each graph $\bar{G}, \widetilde{G}, \widehat G, G^*$ can be expressed in terms of the graph $G$, it is easy and interesting to compare these graphs to each other. The next five theorems will do just that.
\begin{theorem}
Let $G, \widetilde{G},$ and $\widehat G$ be as above. Suppose $|E(G)| \geq 4$. Then
\begin{align*}
    \kemeny(\widehat G) > \kemeny(\widetilde{G}).
\end{align*}
\end{theorem}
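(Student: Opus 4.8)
The plan is to turn the inequality into a polynomial inequality in $m=|E(G)|$. Write $K=\kemeny(G)$ and $M=\mu(G,v)$. By Lemmas \ref{lem:trip1edge} and \ref{lem:trip2edge},
\[
\kemeny(\widetilde G)=\frac{6mK+24M+22m+61}{6(m+4)},\qquad
\kemeny(\widehat G)=\frac{4mK+20M+16m+47}{4(m+5)}.
\]
I would put these over the common denominator $24(m+4)(m+5)$, which is positive, so that $\kemeny(\widehat G)-\kemeny(\widetilde G)$ has the same sign as
\[
6(m+4)\bigl(4mK+20M+16m+47\bigr)-4(m+5)\bigl(6mK+24M+22m+61\bigr).
\]
Expanding, the $m^2K$ terms and the constant-in-$m$ multiples of $M$ cancel, and one is left with
\[
P(m):=24m\,(M-K)+8m^{2}-18m-92 .
\]
So it suffices to prove $P(m)>0$ for every connected $G$ with $m\ge 4$ edges.

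For the lower bound on $M-K$, note first that $\mu(G,v)-\kemeny(G)\ge 0$ is already available (Proposition 3.2 of \cite{ciardo2020braess}), and this alone settles $m\ge 5$, since there $8m^{2}-18m-92\ge 18>0$. To cover the boundary case $m=4$ I would use the sharper bound $\mu(G,v)-\kemeny(G)\ge \tfrac12$. This follows from the identity $\mu(G,v)=\kemeny(G)+\sum_{j\ne v}\pi_j m_{jv}$, obtained by combining the commute-time formula $m_{jv}+m_{vj}=2m\,r_G(j,v)$ with the random target lemma (the quantity $\sum_j \pi_j m_{vj}$ equals $\kemeny(G)$, independent of $v$): indeed $\mu(G,v)=\sum_j d_j r_G(j,v)=\sum_j \pi_j(m_{jv}+m_{vj})=\kemeny(G)+\sum_{j\ne v}\pi_j m_{jv}$. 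Since each $m_{jv}\ge 1$ and $\pi_v=d_v/(2m)\le \tfrac12$ (all $d_v$ edges at $v$ lie among the $m$ edges of $G$), we get $\mu(G,v)-\kemeny(G)\ge 1-\pi_v\ge\tfrac12$. Alternatively, a connected graph with $4$ edges has at most $5$ vertices, so one may simply verify $M-K\ge\tfrac12$ on the (five) such graphs.

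Finally, with $M-K\ge\tfrac12$ we get $P(m)\ge 12m+8m^{2}-18m-92=8m^{2}-6m-92$, and the quadratic $8m^{2}-6m-92$ is increasing for $m\ge 1$ and equals $12$ at $m=4$, hence is positive for all $m\ge 4$. Thus $P(m)>0$ and $\kemeny(\widehat G)>\kemeny(\widetilde G)$, as claimed.

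The one genuinely delicate point is that the crude bound $\mu(G,v)\ge\kemeny(G)$ is not quite enough at $m=4$; one needs the extra $\tfrac12$, which requires either the commute-time / random-target argument above or a finite check. Everything else is routine: the expansion of the two rational expressions and the cancellation that isolates $P(m)$.
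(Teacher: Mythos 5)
Your reduction is exactly the paper's: cross-multiplying the expressions from Lemmas \ref{lem:trip1edge} and \ref{lem:trip2edge} and cancelling leaves $24m(\mu(G,v)-\kemeny(G))+8m^2-18m-92>0$, which matches the paper's $24m(\mu(G,v)-\kemeny(G))+2(4m^2-9m-46)>0$, and both arguments dispose of $m\ge 5$ using only the non-negativity of $\mu(G,v)-\kemeny(G)$ from Proposition 3.2 of \cite{ciardo2020braess}. The two proofs part ways only at the boundary case $m=4$, where the quadratic alone is negative ($-36$): the paper asserts a finite check over the connected graphs with four edges, whereas you prove the uniform strengthening $\mu(G,v)-\kemeny(G)\ge \tfrac12$ via the commute-time identity $m_{jv}+m_{vj}=2m\,r_G(j,v)$ and the random target lemma, giving $\mu(G,v)=\kemeny(G)+\sum_{j\ne v}\pi_j m_{jv}\ge \kemeny(G)+1-\pi_v\ge\kemeny(G)+\tfrac12$. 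That bound is correct and sharp (equality for a star centered at $v$, consistent with Proposition \ref{prop:somekemenys}), and it buys you something the paper's version does not: a clean, checkable certificate for $m=4$ in place of an unexhibited case analysis, plus a reusable quantitative refinement of the $\mu\ge\kemeny$ inequality that would also tighten several of the neighboring theorems in this section. The only caveat is that your argument imports two standard Markov-chain facts not stated in the paper; if you want to stay entirely within the paper's toolkit, your fallback finite verification over the five connected graphs with four edges (and all choices of $v$) is the paper's route.
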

\begin{proof}
Using Lemmas \ref{lem:trip1edge}, \ref{lem:trip2edge} we have
\begin{align*}
    \kemeny(\widehat G) &> \kemeny(\widetilde{G})\\
    \frac{4m\kemeny(G)+20\mu(G,v)+16m+47}{4m+20} &>\frac{6m\kemeny(G)+24\mu(G,v)+22m+61}{6m+24}\\
    (4m\kemeny(G)+20\mu(G,v)+16m+47)(6m+24) &> (6m\kemeny(G)+24\mu(G,v)+22m+61)(4m+20)\\
    24m\mu(G,v)-24m\kemeny(G)+8m^2-18m-92 &> 0\\
    24m(\mu(G,v)-\kemeny(G))+2(4m^2-9m-46) &>0.
\end{align*}
The first term is proven to be non-negative in Proposition 3.2 of \cite{ciardo2020braess}. Treating the second term as a polynomial of variable $m$ it is seen that it is positive for integers at least 5. Checking all possibilities where $|E(G_1)| = 4$ one can find that this expression is still positive. However, it can be shown that this is negative for $G_1 = S_4$, thus it is not true for all graphs with $|E(G_1)| = 3$. 
\end{proof}

\begin{theorem}
Let $G, \widehat G,$ and $G^*$ be as above. Suppose $|E(G)| \geq2$. Then
\begin{align*}
    \kemeny(G^*) > \kemeny(\widehat G).
\end{align*}
\end{theorem}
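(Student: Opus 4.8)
The plan is to follow the template of the two preceding theorems. Since the denominators appearing in Lemmas~\ref{lem:trip2edge} and~\ref{lem:tripK4} satisfy $2m+12>0$ and $4m+20>0$, the asserted inequality $\kemeny(G^*)>\kemeny(\widehat G)$ is equivalent to the inequality obtained after clearing denominators. So first I would compute
\[
\kemeny(G^*)-\kemeny(\widehat G)=\frac{(2m\kemeny(G)+12\mu(G,v)+9m+27)(4m+20)-(4m\kemeny(G)+20\mu(G,v)+16m+47)(2m+12)}{(2m+12)(4m+20)}.
\]
Expanding the two products in the numerator, the $m^2\kemeny(G)$ terms and the constant multiples of $\mu(G,v)$ cancel, and after collecting the remaining terms the numerator becomes $8m\bigl(\mu(G,v)-\kemeny(G)\bigr)+2\bigl(2m^2+m-12\bigr)$, while the denominator $8(m+5)(m+6)$ is positive.

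It then remains to show this numerator is positive whenever $m=|E(G)|\ge 2$. The first summand $8m\bigl(\mu(G,v)-\kemeny(G)\bigr)$ is non-negative by Proposition~3.2 of \cite{ciardo2020braess}, the same fact used in the earlier comparisons. The quadratic $2m^2+m-12$ is strictly increasing for $m>0$ and equals $9$ at $m=3$, hence is positive for every integer $m\ge 3$; this disposes of all $G$ with $|E(G)|\ge 3$.

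The only subtlety is the boundary case $m=2$, where $2m^2+m-12=-2<0$, so the general polynomial bound is not enough on its own. But $m=2$ forces $G=P_3$, and Proposition~\ref{prop:somekemenys} gives $\kemeny(P_3)=\tfrac32$ and $\mu(P_3,v)\in\{2,4\}$ according to whether $v$ is the center or an endpoint, so in either case $\mu(G,v)-\kemeny(G)\ge\tfrac12$ and the numerator is at least $8\cdot 2\cdot\tfrac12+2(-2)=4>0$. I expect the main (though still routine) work to be the careful expansion and cancellation in the numerator; the only genuine care point is remembering to handle $|E(G)|=2$ directly rather than trying to read it off the quadratic.
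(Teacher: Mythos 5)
Your proposal is correct and follows essentially the same route as the paper: clear denominators to reduce the claim to $8m(\mu(G,v)-\kemeny(G))+2(2m^2+m-12)>0$, invoke the non-negativity of $\mu(G,v)-\kemeny(G)$ from Proposition 3.2 of \cite{ciardo2020braess}, observe the quadratic is positive for $m\ge 3$, and check $m=2$ (i.e.\ $G=P_3$) directly. The paper's proof is the same computation, including the explicit $P_3$ case analysis.
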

\begin{proof}
Using Lemmas \ref{lem:trip2edge}, \ref{lem:tripK4} we have
\begin{align*}
    \kemeny(G^*) &> \kemeny(\widehat G)\\
    \frac{2m\kemeny(G)+12\mu(G,v)+9m+27}{2m+12} &> \frac{4m\kemeny(G)+20\mu(G,v)+16m+47}{4m+20}\\
    (2m\kemeny(G)+12\mu(G,v)+9m+27)(4m+20) &> (4m\kemeny(G)+20\mu(G,v)+16m+47)(2m+12)\\
    8m(\mu(G,v)-\kemeny(G))+4m^2+2m-24 &>0\\
    8m(\mu(G,v)-\kemeny(G))+2(2m^2+m-12) &> 0.
\end{align*}
Once again the first term is known to be non-negative by Proposition 3.2 of \cite{ciardo2020braess}. The second term as a polynomial of variable $m$ is positive for for integers at least 3. 

Now suppose $m=2$. Then $G = P_3$. Thus $\kemeny(G) = \frac{3}{2}$ and either $\mu(P_3,v) = 4$ or $\mu(P_3,v) = 2$. In both cases the above expression is positive, hence $\kemeny(G^*) > \kemeny(\hat G).$

Now suppose $m=1$. Then $G=P_2$, $\kemeny(G) = \frac{1}{2},$ and $\mu(G,v) = 1$. In this case, the expression is negative so $\kemeny(G^*) < \kemeny(\widehat G).$
\hidden{
Now suppose $m=2$. Then $d = \begin{bmatrix}1\\2\\1\end{bmatrix}$, and $R = \begin{bmatrix}0&1&2\\1&0&1\\2&1&0\end{bmatrix}$. There are two options for $e_v$. If $e_v = \begin{bmatrix}0\\0\\1\end{bmatrix}$ then the above expression is given by
\begin{align*}
    4md^TRe_v - d^TRd + (2m^2 + m -12) &= 8(4)-12-2 > 0.
\end{align*}
If $e_v = \begin{bmatrix}0\\1\\0\end{bmatrix}$ then we have
\begin{align*}
    4md^TRe_v - d^TRd + (2m^2 + m -12) &= 8(2)-12-2 > 0.
\end{align*}
Thus $\kemeny(G^*) > \kemeny(\hat G)$ when $m=2$.

Now suppose $m=1$. Then $d = \begin{bmatrix}1\\1\end{bmatrix}$, and $R = \begin{bmatrix}0&1\\1&0\end{bmatrix}$. This gives
\begin{align*}
    4md^TRe_v - \tau d^TRd + (2m^2 + m -12) &= 4 - 2 - 9 <0.
\end{align*}
Thus $\kemeny(G^*) < \kemeny(\hat G)$ when $m=1$.
}
\end{proof}

\begin{theorem}
Let $G, \widetilde{G},$ and $G^*$ be as above. Suppose $|E(G)| \geq4$. Then
\begin{align*}
    \kemeny(G^*) > \kemeny(\widetilde{G}).
\end{align*}
\end{theorem}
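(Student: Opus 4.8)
The plan is to mimic the proofs of the two preceding theorems in this subsection: substitute the closed forms for $\kemeny(\widetilde{G})$ and $\kemeny(G^*)$ from Lemmas \ref{lem:trip1edge} and \ref{lem:tripK4}, clear the denominators $6m+24$ and $2m+12$ (whose product $12(m+4)(m+6)$ is positive), and reduce the inequality $\kemeny(G^*) > \kemeny(\widetilde{G})$ to a polynomial inequality in $m$, $\kemeny(G)$ and $\mu(G,v)$. Writing $K = \kemeny(G)$ and $M = \mu(G,v)$, I expect the cross-multiplied difference
\[
6(m+4)\bigl(2mK + 12M + 9m + 27\bigr) - 2(m+6)\bigl(6mK + 24M + 22m + 61\bigr)
\]
to collapse, after the $m^2K$ and $M$ terms cancel, to $24m(M-K) + 10m^2 - 8m - 84 = 24m(M-K) + 2(5m^2 - 4m - 42)$.

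Once in that form the argument is the same as before. The term $24m(M-K)$ is nonnegative because $\mu(G,v) \geq \kemeny(G)$ for every connected graph, which is Proposition 3.2 of \cite{ciardo2020braess} (the very fact used in the two previous theorems). The term $2(5m^2 - 4m - 42)$ is an upward-opening quadratic in $m$ whose positive root is strictly less than $4$, so it is positive for every integer $m \geq 4$; this is exactly where the hypothesis $|E(G)| \geq 4$ is needed, and notably it means no separate small-case verification is required here, in contrast with the first theorem of this trio. There is also a one-line alternative via transitivity: for $|E(G)| \geq 4$ the two theorems just proved give $\kemeny(G^*) > \kemeny(\widehat G)$ (which holds already for $|E(G)| \geq 2$) and $\kemeny(\widehat G) > \kemeny(\widetilde{G})$, and chaining these yields the claim. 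I would present the direct computation as the main proof since it is self-contained and short, and mention the transitivity shortcut as a remark.

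There is no genuine obstacle; the only care needed is bookkeeping in the expansion. The $m^2K$ and constant-$M$ contributions must cancel exactly, the $mK$ and $mM$ contributions must combine into a single factor proportional to $M-K$, and the remaining pure powers of $m$ must assemble into $10m^2 - 8m - 84$. If any of these fail to line up, that would indicate an arithmetic slip in one of Lemmas \ref{lem:trip1edge}–\ref{lem:tripK4} rather than a real difficulty with the statement.
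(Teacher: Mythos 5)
Your computation matches the paper's proof exactly: cross-multiplying the expressions from Lemmas \ref{lem:trip1edge} and \ref{lem:tripK4} reduces the claim to $24m(\mu(G,v)-\kemeny(G))+2(5m^2-4m-42)>0$, with the first term nonnegative by Proposition 3.2 of \cite{ciardo2020braess} and the quadratic positive for all integers $m\geq 4$. This is the same approach as the paper (the transitivity shortcut you mention is a fine extra observation), so the proposal is correct.
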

\begin{proof}
Using Lemmas \ref{lem:trip1edge}, \ref{lem:tripK4} we will show when the following is true.
\begin{align*}
    \kemeny(G^*) &> \kemeny(\widetilde{G})\\
    \frac{2m\kemeny(G)+12\mu(G,v)+9m+27}{2m+12} &> \frac{6m\kemeny(G)+24\mu(G,v)+22m+61}{6m+24}\\
    (2m\kemeny(G)+12\mu(G,v)+9m+27)(6m+24) &> (6m\kemeny(G)+24\mu(G,v)+22m+61)(2m+12)\\
    24m(\mu(G,v)-\kemeny(G))+2(5m^2-4m-42) &>0.
\end{align*}
Again, the first term is known to be non-negative by Proposition 3.2 of \cite{ciardo2020braess}. The second term as a polynomial of variable $m$ is positive for all integers at least 4. Thus the result is proved.
\end{proof}

\begin{theorem}
Let $G, \bar G,$ and $\widehat G$ be as above. Suppose $|E(G)| \geq 1$. Then
\begin{align*}
    \kemeny(\widehat G) > \kemeny(\bar G).
\end{align*}
\end{theorem}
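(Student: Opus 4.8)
The plan is to reduce the claimed inequality to a polynomial inequality in $m = |E(G)|$ together with the nonnegative quantity $\mu(G,v) - \kemeny(G)$, following the same template as the three preceding theorems. First I would invoke Lemma \ref{lem:kemG} and Lemma \ref{lem:trip2edge} to rewrite $\kemeny(\widehat G) > \kemeny(\bar G)$ as
\begin{equation*}
\frac{4m\kemeny(G) + 20\mu(G,v) + 16m + 47}{4m+20} > \frac{2m\kemeny(G) + 6\mu(G,v) + 6m + 15}{2m+6}.
\end{equation*}
Since $m \geq 1$, both denominators are positive, so cross-multiplying is valid.

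Next I would expand both products and collect terms. The $8m^2\kemeny(G)$ and $120\mu(G,v)$ contributions cancel between the two sides, and after simplification the inequality becomes
\begin{equation*}
16m\bigl(\mu(G,v) - \kemeny(G)\bigr) + 8m^2 + 10m - 18 > 0.
\end{equation*}
By Proposition 3.2 of \cite{ciardo2020braess} the first term is nonnegative, so it suffices to control the sign of $8m^2 + 10m - 18$.

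For $m \geq 2$ we have $8m^2 + 10m - 18 \geq 34 > 0$, so the whole left-hand side is positive and the theorem holds in that range. The only remaining case is $m = 1$, where $8m^2 + 10m - 18 = 0$ and the polynomial part contributes nothing; here I would observe that the unique connected graph with a single edge is $G = P_2$, for which $\kemeny(G) = \frac12$ and $\mu(G,v) = 1$ regardless of the choice of $v$, so that $16m(\mu(G,v) - \kemeny(G)) = 8 > 0$ and the inequality still holds.

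The calculation is otherwise entirely routine; the only step that needs a moment's care is the boundary case $m = 1$, since there the ``polynomial'' part of the difference degenerates to zero and strict positivity must be extracted from the term $16m(\mu(G,v) - \kemeny(G))$, which forces one to pin down $G$ explicitly and verify $\mu(G,v) > \kemeny(G)$ directly rather than relying on the cited nonnegativity alone.
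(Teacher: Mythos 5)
Your proposal is correct and follows essentially the same route as the paper: cross-multiply using Lemmas \ref{lem:kemG} and \ref{lem:trip2edge}, reduce to $16m(\mu(G,v)-\kemeny(G)) + 2(4m^2+5m-9) > 0$ (your $8m^2+10m-18$ is the same polynomial), cite the nonnegativity of $\mu(G,v)-\kemeny(G)$ from \cite{ciardo2020braess}, and settle the degenerate case $m=1$ by direct computation with $G = P_2$. No gaps.
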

\begin{proof}
Using Lemmas \ref{lem:kemG}, \ref{lem:trip2edge} we have
\begin{align*}
    \kemeny(\widehat G) &> \kemeny(\bar G)\\
    \frac{4m\kemeny(G)+20\mu(G,v)+16m+47}{4m+20} &> \frac{2m\kemeny(G)+6\mu(G,v)+6m+15}{2m+6}\\
    (4m\kemeny(G)+20\mu(G,v)+16m+47)(2m+6) &> (2m\kemeny(G)+6\mu(G,v)+6m+15)(4m+20)\\
    16m(\mu(G,v)-\kemeny(G))+2(4m^2+5m-9) &> 0.
\end{align*}
By Proposition 3.2 of \cite{ciardo2020braess} the first term is non-negative. Treating the second term as a polynomial of variable $m$ it is seen that it is 0 at $m=1$ and positive for all values of $m>1$. 

Now suppose $m=1$. Then $G=P_2$, $\kemeny(G) = \frac{1}{2},$ and $\mu(G,v) = 1$. Then the above expression is positive so $\kemeny(\widehat G) > \kemeny(\bar G).$

\end{proof}

\begin{theorem}
Let $G, \bar G,$ and $G^*$ be as above. Suppose $|E(G)| \geq 1$. Then
\begin{align*}
    \kemeny(G^*) \geq \kemeny(\bar G)
\end{align*}
with equality if and only if $G = P_2$.
\end{theorem}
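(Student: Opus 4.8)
The plan is to subtract the closed forms from Lemmas~\ref{lem:kemG} and~\ref{lem:tripK4} and reduce, exactly as in the four preceding theorems, to a statement about $\mu(G,v)-\kemeny(G)$. Write $m=|E(G)|$. Lemma~\ref{lem:tripK4} gives $\kemeny(G^*)=\frac{2m\kemeny(G)+12\mu(G,v)+9m+27}{2m+12}$ and Lemma~\ref{lem:kemG} gives $\kemeny(\bar G)=\frac{2m\kemeny(G)+6\mu(G,v)+6m+15}{2m+6}$. Both denominators are positive, so $\kemeny(G^*)\ge\kemeny(\bar G)$ is equivalent to
\[
(2m\kemeny(G)+12\mu(G,v)+9m+27)(2m+6)\ \ge\ (2m\kemeny(G)+6\mu(G,v)+6m+15)(2m+12).
\]
Expanding both sides, the quadratic-in-$m$ term $4m^2\kemeny(G)$ and the term $72\mu(G,v)$ cancel and the surviving $\kemeny(G)$- and $\mu(G,v)$-terms collapse to $12m(\mu(G,v)-\kemeny(G))$, so after collecting the purely $m$-dependent terms the inequality becomes
\[
12m\bigl(\mu(G,v)-\kemeny(G)\bigr)+6\bigl(m^2+m-3\bigr)\ \ge\ 0.
\]

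I would then handle the two summands separately. The first is non-negative by Proposition~3.2 of~\cite{ciardo2020braess}, the inequality $\mu(G,v)\ge\kemeny(G)$ used throughout this section. The second, $6(m^2+m-3)$, is strictly positive as soon as $m\ge 2$ (there $m^2+m-3\ge 3$), so for every graph $G$ with at least two edges the left-hand side is strictly positive and $\kemeny(G^*)>\kemeny(\bar G)$. The only remaining case is $m=1$, which forces $G=P_2$; here $\kemeny(P_2)=\tfrac12$ and $\mu(P_2,v)=1$, so the left-hand side equals $12\cdot\tfrac12+6(1+1-3)=0$ and $\kemeny(G^*)=\kemeny(\bar G)$. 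Combining these gives $\kemeny(G^*)\ge\kemeny(\bar G)$ in all cases, with equality precisely when $m=1$, that is, when $G=P_2$.

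The computation is routine and poses no real obstacle beyond the bookkeeping in the expansion. The one spot that merits a moment of care is the equality clause: one must confirm both that $m=1$ leaves $P_2$ as the only graph (with the stated values $\kemeny(P_2)=\tfrac12$, $\mu(P_2,v)=1$) and, conversely, that no graph with $m\ge 2$ can meet equality, which is immediate since there the constant term $6(m^2+m-3)$ is already at least $18$ while the other summand stays non-negative.
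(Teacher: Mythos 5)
Your proposal is correct and follows essentially the same route as the paper's proof: cross-multiplying the two formulas from Lemmas \ref{lem:kemG} and \ref{lem:tripK4}, reducing to $12m(\mu(G,v)-\kemeny(G))+6(m^2+m-3)\ge 0$, invoking Proposition 3.2 of \cite{ciardo2020braess} for the first term, and checking $m=1$ (i.e.\ $G=P_2$) separately for the equality case. No gaps; the algebra and the equality characterization both check out.
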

\begin{proof}
Using Lemmas \ref{lem:kemG}, \ref{lem:tripK4} we have
\begin{align*}
    \kemeny(G^*) &\geq \kemeny(\bar G)\\
    \frac{2m\kemeny(G)+12\mu(G,v)+9m+27}{2m+12} &\geq \frac{2m\kemeny(G)+6\mu(G,v)+6m+15}{2m+6}\\
    (2m\kemeny(G)+12\mu(G,v)+9m+27)(2m+6) &\geq (2m\kemeny(G)+6\mu(G,v)+6m+15)(2m+12)\\
    12m(\mu(G,v)-\kemeny(G))+6(m^2+m-3) &\geq 0.
\end{align*}
By Proposition 3.2 of \cite{ciardo2020braess} the first term is non-negative. The second term as a polynomial of variable $m$ is positive for all integers at least 2.

Now suppose $m=1$. Then $G=P_2$, $\kemeny(G) = \frac{1}{2},$ and $\mu(G,v) = 1$. Then the above inequality is an equality.

\end{proof}
In summary in the $k=3$ case for graphs of this construction we see that for almost all graphs, the more edges get added in the pendant portion of the graph, the larger Kemeny's constant will be. One of the few graphs where this is not the case is $G = P_2.$ In this case we have that $\kemeny(\bar G) = \kemeny(G^*) < \kemeny(\widehat G) < \kemeny(\widetilde G)$, so each additional edge actually decreases the Kemeny constant slightly until it is back to where it started.

\hidden{
\section{Conclusion}
DO we want a conclusion?? Most papers I've looked at don't have them, but some do. Talk about potential future related ideas (2-separations? Braess closures?), more about why the size of the other graph can cause Braess edges? (Don't know if we know why)

The intuition behind studying graphs with a 1-separation is that a random walk on the vertices could potentially get stuck in one part of the graph before continuing the walk elsewhere due to a single vertex being the only entrance/exit point between the two graph components. This behavior would likely lead to larger Kemeny's constant and, as has been demonstrated here, often allows for Braess edges. The expressions obtained were made possible by the ease of working with resistance distance across 1-separations.

An expression for resistance distance in graphs with a 2-separation was obtained in \cite{barrett2020spanning}. While it might be expected that a 2-separation may not trap a walker as well as a 1-separation, it may be interesting to get expressions for graphs with a 2-separation similar to those in Section \ref{sec:1sep}. Then a similar study of Braess edges could be conducted in graphs with 2-separations.

Another question that could be studied is how much can adding an edge increase Kemeny's constant? Given a graph, can one find a ``most Braess'' edge or a ``most Braess'' set? Can one define a ``Braess closure'' of a graph $G$, a graph $G'$ with largest Kemeny's constant obtained by adding edges to the graph $G$? 

REFERENCE NOTE: maybe unimportant but 2 different references we have ``Jose M Renom'' and ``Jose Miguel Renom'' so should probably adjust to the same, not sure which
}

\bibliographystyle{plain}
\bibliography{sources}

\end{document}